\numberwithin{equation}{section}
\newcommand{\ds}{\displaystyle}
\newcommand{\dd}{\mathrm{d}}
\newcommand{\K}{\mathcal{K}}
\newcommand{\C}{\mathcal{C}}
\newtheorem{Theorem}{Theorem}[section]
\newtheorem{Proposition}{Proposition}[section]
\newtheorem{Lemma}{Lemma}[section]
\newtheorem{example}{Test case}[section]
\theoremstyle{definition}           
\newtheorem{defn}{Definition}[section]
\theoremstyle{remark}
\newtheorem{Remark}{Remark}[section]
\begin{document}
	
	\title[From discrete to continuous condensing aggregation equation]{On the discrete to continuous condensing aggregation equation: A weak convergence approach}

	\author{Anupama Ghorai}
	\address{Department of Mathematics, National Institute of Technology Tiruchirappalli, Tamil Nadu - 620 015, India.}
	\email{anupamaghorai@gmail.com}
	\author{Jitraj Saha}
	\address{Department of Mathematics, National Institute of Technology Tiruchirappalli, Tamil Nadu - 620 015, India.}
	\email{jitraj@nitt.edu (Corresponding author)}
	
	\subjclass[2020]{Primary: 34A12; 35Q70; 45K05; Secondary:  47J35.}
	\keywords{Oort-Hulst-Safronov  equation; condensing aggregation; weak compactness; convergence}
	\begin{abstract}
		 In this article, we study the passage of limits from  discrete to continuous condensing aggregation equation which comprises of Oort-Hulst-Safronov (OHS) equation  together with inverse aggregation process. We establish the relation between discrete and continuous condensing aggregation equations in its most generalized form, where  kinetic-kernels with respect to OHS and inverse aggregation equations are not always equal. Convergence criterion is proved under suitable \emph{a priori estimates} by approximating the continuous equation through a sequence of discrete equations, which subsequently converges towards the solution of the continuous equation by weak compactness principles. Existence of  solution to the discrete model and uniform bounds on different order moments over finite time under particular conditions on kinetic-kernels are investigated.
		We analyze long-time dynamics and   blowup of the solution leading to  mass-loss or gelation for specific kernels. Three numerical experiments show the accuracy and convergence of approximated solutions to the exact solution of the continuous equation when $\varepsilon$ approaches zero.
	\end{abstract}
	\maketitle

	\section{Introduction}\label{sec_1}
Particles suspended in a medium (typically a liquid or gas)  undergo random motion due to Brownian dynamics, leading them to move in various directions. This stochastic movement gives rise to different physical interactions such as aggregation, fragmentation, shattering, condensation etc. \cite{da1995existence,laurenccot2000class,banasiak2004conservativity,blair2007coagulation,banasiak2012analytic,banasiak2019analytic1,banasiak2019analytic2,giri2021weak} which significantly influence the evolution of particle size and their physical properties, thereby referring to as particulate systems.   Understanding and modelings these physical interactions in particulate systems have led to research and advancements  in various fields such as 
	crystallization, polymerization, astrophysics (formation of dust-clouds, planets, asteriods etc.), colloid chemistry, bio-science and precipitation  \cite{randolph2012theory,penlidis1986mathematical,MR1673235,wang2010aggregation, schwarzer2006predictive}.
	
In this study, we concentrate on aggregation process where the  particles  merge with each other to form larger agglomerates. These agglomerates are then responsible for alterations in characteristics of particles. 
	Thus, the particle size (or volume) increases and  total number of particles decreases over time evolution. In general, mass of the particles is expected to remain conserved for certain growth-rates. However, choices of kinetic-rates play a crucial role towards the conservation criterion, and hence formation of a large cluster  may lead to mass-loss (or gelation). 
	The pioneering work  on particulate modeling due to  two-particle aggregation   was introduced by renowned mathematician Smoluchowski \cite{smoluchowski1916drei} in 1916. Later,  
	Oort, Hulst and Safronov  \cite{oort1946gas,safronov1972evolution} proposed a new  equation, where evolution of particle growth depends on their sizes. Although binary particle encounter is taken into consideration,  mathematical formulation is  different from the classical Smoluchowski aggregation model.  
	From the name of the researchers involved, this aggregation model is known as Oort-Hulst-Safronov (OHS) equation,  defined by  
	\begin{align}\label{1_1}
		\partial_t{f}(t,x)=-\partial_x\left(f(t,x)U(t,x)\right)-W(t,x), \quad\text{for all}\quad (t,x)\in \mathbb{R_+}\times\mathbb{R_+},
	\end{align}
	where $\mathbb{R_+}:=(0,+\infty)$, 
	\begin{align}\label{1_2}
		U(t,x):=\int_{0}^{x} y\K(x,y)f(t,y)\dd y,\quad
		\text{and}\quad	W(t,x):=\int_{x}^{\infty} \K(x,y)f(t,x)f(t,y)\dd y.
	\end{align}
	The function $f(t,x)$  represents the number density of particles of size $x$ at time $t$, $U(t,x)$ denotes the growth-rate of formation of the particles of size $x$  from  smaller ones and $W(t,x)$ accounts the depletion of particles of size $x$ when they coalesce with larger particles \cite{MR1674432}.
	In  later  exploration, Davidson et al.  \cite{MR3640930} addressed a new, combined equation which is a coupling of OHS equations \eqref{1_1}-\eqref{1_2} together with an inverse aggregation process whose mechanism is contrary to  OHS  equations  \eqref{1_1}-\eqref{1_2}. Here, the evolution of particle growth  occurs through  the birth of particles of size $x$ from larger ones. The disappearance or annihilation of particles of  size $x$ happens  when they merge with smaller particles.   This  competing procedure is referred to as inverse aggregation process. Mathematically, inverse aggregation   reads as  the following  transport equation:
	\begin{align}\label{1_3}
		\partial_t{f}(t,x)=-\partial_x\left(f(t,x)V(t,x)\right)-Z(t,x), \quad\text{for all}\quad (t,x)\in\mathbb{R_+}\times\mathbb{R_+},
	\end{align}
	where
	\begin{align}\label{1_4}
		V(t,x):=\int_{x}^{\infty} y\C(x,y)f(t,y)\dd y,\quad
		\text{and}\quad  Z(t,x):=\int_{0}^{x} \C(x,y)f(t,x)f(t,y)\dd y.
	\end{align}
	The function $V(t,x)$ defines the growth-rate to form the particles of  size $x$ from larger ones and $Z(t,x)$ interprets similar to $W(t,x)$ but due to coalesce with smaller particles. The aggregation kernels $\K(x,y)$ and $\C(x,y)$ represent  the kinetic-rates at which   particles of size $x$  agglomerate with  particles of  size $y$  to form larger ones of size $(x+y)$. In general, these kernels are assumed to be nonnegative and symmetric with respect to the arguments $x$ and $y$. Note that  growth-rates $U(t,x)$ and $V(t,x)$
	depend on the density of  particles, but is independent on the size of  particles taken place in this process.
	Davidson et al. \cite{MR3640930} named OHS equations \eqref{1_1}-\eqref{1_2} together with  inverse aggregation equations \eqref{1_3}-\eqref{1_4} as the  \emph{condensing aggregation} (CA) equation which is written as
	\begin{align}\label{1_5}
		\partial_t{f}(t,x)&=\notag -\partial_x\bigg[f(t,x)\bigg(U(t,x)+V(t,x)\bigg)\bigg]-W(t,x)-Z(t,x)\notag\\
		&=:Q(f)(t,x),\quad\text{for all}\hspace{0.2cm}(t,x)\in\mathbb{R_+}\times\mathbb{R_+},
	\end{align}
	with the initial conditions,
	\begin{align}\label{1_6}
		f(0,x)=f^{\text{in}}(x)\geq0,\quad\text{for all}\hspace{0.2cm} x\in\mathbb{R_+}.
	\end{align}
	This equation \eqref{1_5} is known as the continuous condensing aggregation  (CCA) equation. 
	  In absence of terms  $W(t,x)$ and $Z(t,x)$, the equation \eqref{1_5} reduces to a continuity equation with density $f(t,x)$ and velocity $U(t,x)+V(t,x)$.

	It is worth  mentioning that if $\K=\C$,  the equation \eqref{1_5} becomes 
	\begin{align}\label{1_7}
		\partial_t{f}(t,x)	
		=-\partial_x\left[f(t,x)\int_{0}^{\infty} y\K(x,y)f(t,y)\dd y\right]-\int_{0}^{\infty} \K(x,y)f(t,x)f(t,y)\dd y.
	\end{align}
	This is a simplified mathematical equation provided that the integrals in  RHS of  the above equation exist.	 The physical importance of  equation \eqref{1_7} is yet to discuss. However, for certain choices of kernels exact solutions are available in \cite{MR3640930}.
	
	\subsection{State of the art and motivation}
	In the literature,	the  CA equation  has been described  in both its  discrete and  continuous forms  \cite{MR3640930}.
	Naturally particle population exists in a disperse framework. So, particle size can range from dust-ones (i.e. particle of nearly size zero)  to infinitely large clusters --
	though such events are rare. 
	To predict the dynamics  and impact of dust-size particle population in a variety of applications such as  aerosol science, chemical engineering and planetary formation  \cite{yu2017verification,di2011modeling}, researchers rely on the continuous equation. In contrast,  the discrete  equation \cite{MR1084278,MR4313094} explicitly depicts the evolution of the size (or volume) particle population categorized  into a well-defined quantized spectrum (i.e. positive integers).
	In discrete framework, a $i$-cluster consists of $i$-base particles of size $m_0$. Thus, setting $x=im_0$  and subsequently  passing $m_0\to 0$ one can  generate the analogue continuous equation  from  the discrete equation. This flexibility allows researchers to bridge the gap between discrete and continuous frameworks, for accurately modeling  a wide number of systems and phenomena. 
	In the discrete regime \cite{MR3640930},  
	the CA equations  \eqref{1_5}-\eqref{1_6}  is redefined by approximating $\partial_x$  by an upwind difference scheme and the integrals by Riemann sums. 
	Therefore, the discrete CA (DCA) equation is the time evolution of particle size distribution  $c_i(t)$ (with $c_0(t)=0$ for any $t\geq0$) of size $i$, given  by for all $t\geq0$ and  $i\geq1$,
	\begin{align}\label{1_8}
		\frac{\dd c_i(t)}{\dd t} = Q_i(c(t)),\quad \text{with initial data,}\quad c_i(0)=c^{\text{in}}_i\geq0,
	\end{align}
	where $c=\{c_i\}_{i\geq1}$,
	\begin{align}\label{1_9}
		Q_i(c(t)):=&\notag c_{i-1}(t)\sum_{j=1}^{i-1}j \K_{i-1,j}  c_{j}(t) - c_i(t) \sum_{j=1}^{i} j\K_{i,j}  c_j(t)-\sum_{j=i}^{\infty} \K_{i,j} c_i(t)c_j(t)\\
		&+c_{i-1}(t) \sum_{j=i-1}^{\infty} j\C_{i-1,j}  c_{j}(t) - c_i(t) \sum_{j=i}^{\infty} j\C_{i,j}  c_j(t)-\sum_{j=1}^{i} \C_{i,j} c_i(t)c_j(t).
	\end{align}
	The functions $\K_{i,j}$ and $\C_{i,j}$ 
	define the discrete aggregation and  inverse aggregation kernels respectively and are assumed to be nonnegative and symmetric with  the arguments $i$ and $j$. For a sequence of  sufficiently  decaying real numbers $\{\phi_i\}_{i\geq1}$, 
	the weak formulation (or moment equation) corresponding to the DCA equations  \eqref{1_8}-\eqref{1_9} is written as
	\begin{align}\label{1_10}
		\frac{\dd}{\dd t}\sum_{i=1}^{\infty}\phi_ic_i(t)=\sum_{i=1}^{\infty}\sum_{j=1}^{i}\left[j(\phi_{i+1}-\phi_i)-\phi_j\right]\K_{i,j}c_i(t)c_j(t)+\sum_{i=1}^{\infty}\sum_{j=i}^{\infty}\left[j(\phi_{i+1}-\phi_i)-\phi_j\right]\C_{i,j}c_i(t)c_j(t).
	\end{align}
	Mathematical studies such as existence, uniqueness, nonexistence of mass conserving solution and self-similar  solutions can be found  for OHS equations \eqref{1_1}-\eqref{1_2} in \cite{MR1674432,MR2000977,MR2338411, MR2142931,MR3238514,MR4313094,MR4421757,barik2022mass,dastrend}.
	However, due to the presence of nonlinear and nonlocal terms, the  CA equations \eqref{1_5}-\eqref{1_6}  is less explored. Moreover, the contrasting behavior of OHS  and inverse aggregation equations  makes the model more challenging to address.	In  literature,  Davidson et al. \cite{MR3640930}  documented    the  mathematical aspects such as self-similar solution, equilibrium  solution and exact solutions of CA equation  for certain choices of  kernels.
	In this context, our objective is to explore the passage of limits and establish the relation between discrete \eqref{1_8}-\eqref{1_9} and continuous forms \eqref{1_5}-\eqref{1_6} of the CA equations. 
	This comparative analysis demonstrates that for a fairly broad class of  continuous CA equations, there exists a  class of  approximated  discrete CA equations whose sequence  approaches to the solution of	the continuous CA equation when  limiting conditions are applied. Thus, this  convergence theory will provide concepts about their interconnected dynamics and  contribute to the particle growth governed by nonlinear and nonlocal interactions.  
	The theoretical framework for such convergence has been  developed in some related models. Notably, Bagland  el al. \cite{MR2158221} demonstrated  such exercise for 
	OHS   equations \eqref{1_1}-\eqref{1_2}.
	 However, when the OHS model is coupled with inverse aggregation equation—yielding  the condensing aggregation equation, it is not evident whether the convergence theory still reliable to the complete coupled model. In fact, no rigorous convergence results for the CA equation have been explored to date. This lack of theoretical  guarantees represents a significant gap in the literature and provides strong motivation for our present study, where our aim is to establish and validate a comprehensive convergence analysis for the CA equation. Furthermore, alongside the development of a convergence framework, we also aim to examine the existence of the solution to the discrete CA equations under some admissible kinetic-rates.

	In this article, we focus on identifying the conditions under which discrete CA equation can be linked to continuous CA equation, particularly in the context of growth conditions on kinetic-kernels $\K$ and $\C$. Our  convergence strategy is inspired from the works of Lauren{\c{c}}ot \cite{MR1938720} and Bagland \cite{MR2158221}.
	Furthermore,  we discuss \emph {a priori estimates} and prove  weak convergence of the sequence 
	of  approximated discrete equations to a  solution of  continuous CA equation under weak compactness principles.  
	We  also show  the existence of  solutions to the discrete CA equation for specific choices of kinetic-kernels. The  propagation of moments, long-time behavior and occurrence of gelation for some particular choices of kernels are  investigated thoroughly.
	Insights to the theoretical results are examined using numerical experiments. In numerical computations, all the possible choices of the kernels starting from simplified $\K=\C$ to $\K\neq\C$ and  reduction to OHS equation are discussed. An excellent agreement with the exact solution (wherever available) is observed as $\varepsilon\to 0$. Also the relative $L^1$ error for test cases (wherever exact solutions available) is performed  for  accuracy of the solutions.

	The  paper is organized as follows:  section \ref{sec_3} records main results including assumptions on kinetic-kernels, the construction of  a sequence of approximated discrete CA equations with appropriate discretization of associated functions. In section \ref{sec_4},  convergence of a sequence of the solutions of the discrete equations towards the  solution of  continuous CA equation is proved. We  also emphasize  on existence of solutions of  discrete CA equations with certain estimates in section \ref{sec_5}. In section \ref{sec_6},  propagation of moments is examined along with long-time behavior and occurrence of gelation. Section \ref{sec_7} carries out the numerical results.

\section{Main results}\label{sec_3}

\subsection{Continuous regime}
 
\subsubsection{Construction and assumption}\label{subsec_2}
For $1\le p< \infty$,  
$W^{1,p}(\Omega)$ is the Sobolev space defined by 
\begin{align*}
	W^{1,p}(\Omega):=\left\{u\in L^p(\Omega): \hspace{0.1cm}D^{q}u\in L^p(\Omega),\hspace{0.1cm} \text{for all}\hspace{0.2cm} |q|\le 1\right\},
\end{align*}
where $D^{q}u$ denotes the distributional derivatives of $u$. Also, let  $W^{1,\infty}_{\text{loc}}(\Omega)$ be  the space of functions $u$ such that both $u$ and its weak derivative $Du$ are  essentially bounded on every compact subset $V$ of $\Omega$,  that is, $u, Du\in L^{\infty}_{\text{loc}}(\Omega)$. 
Consider a weighted $L^1$ space defined by
${L}^1_{1}(\mathbb{R_+}):=L^1(\mathbb{R_+},(1+x)\dd x)$, and 
assume that 
\begin{align}\label{2_1}
	f^{\text{in}}\in L^1_{1}(\mathbb{R_+}),
	\quad\text{and}\quad f^{\text{in}}\geq0\quad\text{a.e.}
\end{align}
Throughout the paper we consider
the  following properties:\\ 
$(i)$\hspace{0.1cm} the kernels satisfy
\begin{align}\label{2_3}
	\K, \C\in W^{1,\infty}_{\text{loc}}([0,+\infty)^2), \quad\text{and}\quad
\end{align}
$(ii)$\hspace{0.1cm} for some constants $\alpha$, $ \beta\geq0$,
\begin{align}\label{2_4}
	\partial_x\K(x,y)\geq -\alpha,\quad\text{and}\quad\partial_x\C(x,y)\geq -\beta. 
\end{align}
\textit{Hypotheses in the continuous regime: For every $R\geq1$, $\K$ and $\C$ satisfy the following growth conditions respectively\\
	(CH1):	
	$\ds	\upsilon_R(y)=\sup_{x\in[0,R]}\frac{\K(x,y)}{y}\to 0\quad \text{as}\quad y\to +\infty,\quad\text{and}$\\
	(CH2): 
	$\ds \sup_{\substack{x\in[0,R]\\y\geq R}}\C(x,y)\le \mathcal{M}$, where  $\mathcal{M}\geq1$ is a constant.}
	
	Here, \textit{(CH1)} indicates that the kernel $\K$ grows strictly sub-linearly in the larger particle size $y$. This ensures tail control for the reaction terms. Likewise \textit{(CH1)}, \textit{(CH2)} allows that $\C$  stays uniformly bounded when particle's size $y$ is very large. This prevents blow-ups in the reaction nonlinear terms on truncated domains.
%

We are  in the stage to define the weak solution to the  CCA equations \eqref{1_5}-\eqref{1_6}.  
Let $C([0,T];\omega-L^1(\mathbb{R_+}))$  denote the space of weakly continuous functions from $[0,T]$ in $L^1(\mathbb{R_+})$. 
\begin{defn}[Weak solution]
	\textit{Let $T\in\mathbb{R_+}$ and $0\leq t\leq T$. Assume that $\K(x,y)$ and $\C(x,y)$ are nonnegative and symmetric satisfying the general conditions given in \eqref{2_3}-\eqref{2_4} and growth conditions (CH1) and (CH2) and  $f^{\textnormal{in}}\in L_1^1(\mathbb{R}_+)$. A function $f=f(t,x)$ is a weak solution to the CCA equations \eqref{1_5}-\eqref{1_6} 
		if 
		$$0\le f\in C\left([0,T];\omega-L^1(\mathbb{R_+})\right)\cap L^{\infty}\left(0,T;L^1_{1}(\mathbb{R_+})\right),$$
		and	for each $\phi\in\mathcal{D}(\mathbb{R_+})$, $f$ holds
		\begin{align}\label{2_5}
			\notag	\int_{0}^{\infty}\bigg(f(t,x)-f^{\textnormal{in}}(x)\bigg)\phi(x)\dd x
			=& \int_{0}^{t}\int_{0}^{\infty}\int_{0}^{x}\K(x,y)f(s,x)f(s,y)\left[y\partial_x\phi(x)-\phi(y)\right]\dd y\dd x\dd s\\	&+\int_{0}^{t}\int_{0}^{\infty}\int_{x}^{\infty}\C(x,y)f(s,x)f(s,y)\left[y\partial_x\phi(x)-\phi(y)\right]\dd y\dd x \dd s,
	\end{align}}
	where $\mathcal{D}(\mathbb{R_+})$ is the space of $C^{\infty}$-smooth functions with compact support on $\mathbb{R_+}$.
\end{defn}
Our approach regarding the convergence study involves constructing the step-wise approximations to the  functions $f$, $\K$, $\C$, $\phi$ and derivative of $\phi$. For this, we now specify them  in the following discretized structure similarly as in \cite{MR2158221}.
\subsubsection{ Approximations and reformulations}
Set $\varepsilon\in(0,1)$ and
$\ds 	\Lambda^{\varepsilon}_{i}:=[(i-1/2)\varepsilon,(i+1/2)\varepsilon)$, for  all $i\geq1$
and let $1_{\Lambda^{\varepsilon}_{i}}$ denote the characteristic function defined on ${\Lambda^{\varepsilon}_{i}}$.  Then, for $(t,x,y)\in\mathbb{R}^{3}_{+}$, define
\begin{align}\label{2_6}
		&f_{\varepsilon}(t,x)=\ds \sum_{i=1}^{\infty}c_i(t)1_{\Lambda^{\varepsilon}_{i}}(x),\\
		\K_{\varepsilon}(x,y)= \ds \sum_{i,j=1}^{\infty}&\frac{\K_{i,j}}{\varepsilon}1_{\Lambda^{\varepsilon}_{i}}(x)1_{\Lambda^{\varepsilon}_{j}}(y),\quad\text{and}\quad
		\C_{\varepsilon}(x,y)= \ds \sum_{i,j=1}^{\infty}\frac{\C_{i,j}}{\varepsilon}1_{\Lambda^{\varepsilon}_{i}}(x)1_{\Lambda^{\varepsilon}_{j}}(y).
\end{align}  
For every $x\in\mathbb{R_+}$ and $\phi\in\mathcal{D}(\mathbb{R_+})$,  define 
\begin{align}\label{2_8}
	\phi_{\varepsilon}(x)=\sum_{i=1}^{\infty}\phi^{\varepsilon}_i1_{\Lambda^{\varepsilon}_{i}}(x),
	\quad\text{with}\quad\phi^{\varepsilon}_i=\frac{1}{\varepsilon}\int_{\Lambda^{\varepsilon}_{i}}\phi(y)\dd y,
\end{align}  
and 
$g:\mathbb{R_+}\to\mathbb{R}$ as
\begin{align}\label{2_9}
	g(x)=\sum_{i=1}^{\infty}g_i1_{\Lambda^{\varepsilon}_{i}}(x),
	\quad\text{for all}\quad g_i\in\mathbb{R},
\end{align}  
with the discrete size derivative $D_{\varepsilon}(g)$ of $g$ as 
\begin{align}\label{2_10}
	D_{\varepsilon}(g)(x)=\frac{1}{\varepsilon}\sum_{i=1}^{\infty}(g_{i+1}-g_i)1_{\Lambda^{\varepsilon}_{i}}(x)
	,\quad\text{for all}\quad x\in\mathbb{R_+}.
\end{align}
Using these approximations \eqref{2_6}-\eqref{2_10}, 
equation \eqref{1_10} is  written as
\begin{align}\label{2_11}
	\frac{\dd}{\dd t}\int_{0}^{\infty}f_{\varepsilon}(t,x)\phi_{\varepsilon}(x)\dd x=&\notag \int_{0}^{\infty}\int_{0}^{r_{\varepsilon}(x)}\K_{\varepsilon}(x,y)f_{\varepsilon}(t,x)f_{\varepsilon}(t,y)\left[yD_{\varepsilon}(\phi_{\varepsilon})(x)-\phi_{\varepsilon}(y)\right]\dd y\dd x\\
	&+\int_{0}^{\infty}\int_{r_{\varepsilon}(x)}^{\infty}\C_{\varepsilon}(x,y)f_{\varepsilon}(t,x)f_{\varepsilon}(t,y)\left[yD_{\varepsilon}(\phi_{\varepsilon})(x)-\phi_{\varepsilon}(y)\right]\dd y\dd x,
\end{align}
where 
$\ds r_{\varepsilon}(x):=\left(\left[\frac{x}{\varepsilon}+\frac{1}{2}\right]\varepsilon+\frac{1}{2}\varepsilon\right) $,
and $[v]$  denotes the largest integer  $v$.\\
Suppose  $\left\{f_{\varepsilon}\right\}$ converges towards a function $f$ and if $\left\{\K_{\varepsilon}\right\}$, $\left\{\C_{\varepsilon}\right\}$, $\left\{\phi_{\varepsilon}\right\}$ and $\left\{ D_{\varepsilon}(\phi_{\varepsilon})\right\}$ converge towards $\K$, $\C$, $\phi$ and $\partial_x\phi$, respectively, then passing the limit $\varepsilon\to0$ in equation \eqref{2_11}, we get that for every $\phi\in\mathcal{D}(\mathbb{R_+})$, $f$ obeys
\begin{align}\label{2_12}
	\frac{\dd}{\dd t}\int_{0}^{\infty}f(t,x)\phi(x) \dd x=&\notag \int_{0}^{\infty}\int_{0}^{x}\K(x,y)f(t,x)f(t,y)[y\partial_x\phi(x)-\phi(y)]\dd y\dd x\\
	&+\int_{0}^{\infty}\int_{x}^{\infty}\C(x,y)f(t,x)f(t,y)[y\partial_x\phi(x)-\phi(y)]\dd y \dd x.
\end{align} 
Equation \eqref{2_12} is considered as the weak formulation of CCA equations \eqref{1_5}-\eqref{1_6}.
We next approximate the discretized form of all kinetic-rates and the density function  with initial data involved in the DCA equations \eqref{1_8}-\eqref{1_9}.  The continuous functions associated with  discreized  approximations are  also formulated.
\allowdisplaybreaks
\subsection{Discrete regime: approximations and weak reformulations}
For a fixed $\varepsilon\in(0,1)$, the discrete initial condition $c^{\text{in},\varepsilon}=\{c^{\text{in},\varepsilon}_{i}\}_{i\geq1}$ is defined  by
\begin{align}\label{2_13}
	c^{\text{in},\varepsilon}_{i}=\frac{1}{\varepsilon}\int_{\Lambda^{\varepsilon}_{i}}f^{\text{in}}(x)\dd x,\quad\text{for each}\quad i\geq1.
\end{align} 
We  next define discrete kernels,  
for all $i,j\geq1$
\begin{align}\label{2_14}
	\notag	\K^{\varepsilon}_{i,j}:=&\frac{1}{\varepsilon}\int_{\Lambda^{\varepsilon}_{i}\times\Lambda^{\varepsilon}_{j}}\K(x,y)
	\dd y\dd x\quad \text{or,}\quad\K^{\varepsilon}_{i,j}:=\varepsilon\K(\varepsilon i,\varepsilon j), \quad \text{and}\\
	\C^{\varepsilon}_{i,j}:=&\frac{1}{\varepsilon}\int_{\Lambda^{\varepsilon}_{i}\times\Lambda^{\varepsilon}_{j}}\C(x,y)\dd y\dd x \quad\text{or,}\quad\C^{\varepsilon}_{i,j}:=\varepsilon\C(\varepsilon i,\varepsilon j).
\end{align}
\noindent
\textit{The consequences of discrete approximations of kernels $\K$ and $\C$ for  hypotheses \textit{(CH1)-(CH2)} are given as follows: for all $i,j\geq1$ and $\varepsilon\in(0,1)$, \\
	(DH1): $\K^{\varepsilon}_{i,j}$ satisfies growth condition 
		$\ds \lim_{j\to+\infty}\frac{\K^{\varepsilon}_{i,j}}{j}=0,$\quad\text{and}\\
	(DH2): For some integer $m\geq1$, \; $\C^{\varepsilon}_{i,j}$ satisfies\;   
$\ds\sup_{j\geq m}\C^{\varepsilon}_{i,j}\le\varepsilon \mathcal{M}$, where  $\mathcal{M}\geq1$ is a constant.}\\
Therefore, for each $t\in[0,+\infty)$ and $i\geq1$,  the  DCA equations \eqref{1_8}-\eqref{1_9} is redefined  by
\begin{align}\label{2_15}
\frac{\dd c^{\varepsilon}_i(t)}{\dd t} = Q^{\varepsilon}_i(c^{\varepsilon}(t)),\quad\text{with  initial data,} \quad c^{\varepsilon}_i(0)=c^{\text{in},\varepsilon}_i,
\end{align}
where $c^{\varepsilon}=\{c^{\varepsilon}_i\}_{i\geq1}$,\quad and 
\begin{align}\label{2_16}
Q^{\varepsilon}_i(c^{\varepsilon}(t)):=&\notag c^{\varepsilon}_{i-1}(t) \sum_{j=1}^{i-1}j \K^{\varepsilon}_{i-1,j}  c^{\varepsilon}_{j}(t) - c^{\varepsilon}_i(t) \sum_{j=1}^{i} j\K^{\varepsilon}_{i,j} c^{\varepsilon}_j(t)-\sum_{j=i}^{\infty} \K^{\varepsilon}_{i,j} c^{\varepsilon}_i(t)\;c^{\varepsilon}_j(t)\\
&+c^{\varepsilon}_{i-1}(t) \sum_{j=i-1}^{\infty} j\C^{\varepsilon}_{i-1,j}  c^{\varepsilon}_{j}(t) - c^{\varepsilon}_i(t) \sum_{j=i}^{\infty} j\C^{\varepsilon}_{i,j} c^{\varepsilon}_j(t)-\sum_{j=1}^{i} \C^{\varepsilon}_{i,j} c^{\varepsilon}_i(t)c^{\varepsilon}_j(t).
\end{align}
Consequently, the weak formulation corresponding to the above DCA equations is calculated by 
\begin{align}\label{2_17}
\frac{\dd}{\dd t}\sum_{i=1}^{\infty}\phi^{\varepsilon}_ic^{\varepsilon}_i(t)=&\sum_{i=1}^{\infty}\sum_{j=1}^{i}\left[j(\phi^{\varepsilon}_{i+1}-\phi^{\varepsilon}_i)-\phi^{\varepsilon}_j\right]\K^{\varepsilon}_{i,j}c^{\varepsilon}_i(t)c^{\varepsilon}_j(t)\notag\\&+\sum_{i=1}^{\infty}\sum_{j=i}^{\infty}\left[j(\phi^{\varepsilon}_{i+1}-\phi^{\varepsilon}_i)-\phi^{\varepsilon}_j\right]\C^{\varepsilon}_{i,j}c^{\varepsilon}_i(t)c^{\varepsilon}_j(t),
\end{align}
where $\left\{\phi^{\varepsilon}_i\right\}_{i\geq1}$ is a sequence of positive real numbers.
For any particulate model,
the total mass of particles at time $t$ for  equations  \eqref{2_15}-\eqref{2_16}  in any closed system does not exceed the total mass of particles at initial time, that is,
\begin{align}\label{2_18}
\sum_{i=1}^{\infty} ic^{\varepsilon}_{i}(t)\le\sum_{i=1}^{\infty}ic^{\text{in},\varepsilon}_{i},\quad \text{for all}\quad t\geq0.
\end{align}
\subsubsection{Continuous formulations for the discrete approximations:}
For $(t,x,y)\in\mathbb{R}^{3}_{+}$, set
\begin{subequations}\label{2_19}
\begin{align}
	&f_{\varepsilon}(t,x)=\sum_{i=1}^{\infty}c^{\varepsilon}_i(t)1_{\Lambda^{\varepsilon}_{i}}(x)
	,\label{2_19a}\\ \K_{\varepsilon}(x,y)=\sum_{i,j=1}^{\infty}\frac{\K^{\varepsilon}_{i,j}}{\varepsilon}&1_{\Lambda^{\varepsilon}_{i}}(x)1_{\Lambda^{\varepsilon}_{j}}(y),\quad \text{and}\quad
	\C_{\varepsilon}(x,y)=\sum_{i,j=1}^{\infty}\frac{\C^{\varepsilon}_{i,j}}{\varepsilon}1_{\Lambda^{\varepsilon}_{i}}(x)1_{\Lambda^{\varepsilon}_{j}}(y)
	.\label{2_19b}
\end{align}
\end{subequations}
Here, $\left\{\K_{\varepsilon}\right\}$ and  $\left\{\C_{\varepsilon}\right\}$ converge towards $\K$ and $\C$  respectively when $\varepsilon\to 0$. 
Integrating the approximated function  $f_{\varepsilon}$ \eqref{2_19a} with respect to $x$ over $(0,+\infty)$, it follows that
\begin{align}\label{2_20}
\int_{0}^{\infty}f_{\varepsilon}(t,x)\dd x=\int_{0}^{\infty}\sum_{i=1}^{\infty}c^{\varepsilon}_i(t)1_{\Lambda^{\varepsilon}_{i}}(x)
\dd x=\varepsilon\sum_{i=1}^{\infty}c^{\varepsilon}_i(t).
\end{align}
Similarly, 
\begin{align}\label{2_21}
\int_{0}^{\infty}xf_{\varepsilon}(t,x)\dd x=\int_{0}^{\infty}\sum_{i=1}^{\infty}c^{\varepsilon}_i(t)x1_{\Lambda^{\varepsilon}_{i}}(x)
\dd x=\varepsilon^2\sum_{i=1}^{\infty}ic^{\varepsilon}_i(t).
\end{align}

\section{Weak convergence of solution}\label{sec_4}
To prove 
the  sequence $\left\{f_{\varepsilon}\right\}$  converges weakly to a function $f$ 
in $C([0,T];\omega-L^1(\mathbb{R_+}))$ for each $T\in\mathbb{R_+}$, 
we set \emph{a priori estimates}.  
The primary approach depends on
by taking uniform estimates with respect to  $\varepsilon$ for the approximated function $f_{\varepsilon}$ 
and finally passing the limit  $\varepsilon\to0$.
\subsection{A priori estimates}
Fix
\begin{align}\label{3_1}
||f^{\text{in}}||_{0,1}:=\int_{0}^{\infty}(1+x)f^{\text{in}}(x)\dd x.
\end{align}
For simplicity, we  write $c^{\varepsilon}_i$ instead of $c^{\varepsilon}_i(t)$.
\begin{Lemma}\label{lem_3_1}
For each $t\geq0$ and $\varepsilon\in(0,1)$, 
\begin{align}\label{3_2}
	\int_{0}^{\infty}xf_{\varepsilon}(t,x)\dd x\le2||f^{\textnormal{in}}||_{0,1},\quad \text{and}\quad\int_{0}^{\infty}f_{\varepsilon}(t,x)\dd x\le ||f^{\textnormal{in}}||_{0,1}.
\end{align}
\end{Lemma}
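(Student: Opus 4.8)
The plan is to reduce both estimates to the elementary identities \eqref{2_20} and \eqref{2_21}, which rewrite the spatial integrals of $f_\varepsilon$ as the discrete sums $\varepsilon\sum_{i\ge1}c^\varepsilon_i(t)$ and $\varepsilon^2\sum_{i\ge1}ic^\varepsilon_i(t)$, and then to bound each sum by its value at $t=0$, comparing the discrete initial data $c^{\text{in},\varepsilon}_i$ with $f^{\text{in}}$ through \eqref{2_13}.

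For the first moment I would begin from \eqref{2_21} and use the mass-monotonicity \eqref{2_18}, giving $\int_0^\infty x f_\varepsilon(t,x)\,\dd x=\varepsilon^2\sum_{i\ge1}ic^\varepsilon_i(t)\le\varepsilon^2\sum_{i\ge1}ic^{\text{in},\varepsilon}_i$. Substituting \eqref{2_13} turns the right-hand side into $\sum_{i\ge1}\int_{\Lambda^\varepsilon_i}\varepsilon i\,f^{\text{in}}(x)\,\dd x$. The decisive elementary observation is that on each cell $\Lambda^\varepsilon_i=[(i-1/2)\varepsilon,(i+1/2)\varepsilon)$ one has $\varepsilon i\le x+\varepsilon/2$ and, because $i\ge1$, also $x\ge\varepsilon/2$, so that $\varepsilon i\le 2x$. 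Summing over $i$ and using that the cells tile $[\varepsilon/2,+\infty)$ then bounds the expression by $2\int_0^\infty x f^{\text{in}}(x)\,\dd x\le 2\|f^{\text{in}}\|_{0,1}$ via \eqref{3_1}; this is precisely where the constant $2$ originates.

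For the zeroth moment the same scheme needs the total number $\sum_{i\ge1}c^\varepsilon_i(t)$ to be non-increasing, since the cruder comparison $\sum c^\varepsilon_i\le\sum ic^\varepsilon_i$ would only produce an unfavourable factor $1/\varepsilon$. I would obtain this monotonicity directly by taking $\phi^\varepsilon_i\equiv1$ in the weak formulation \eqref{2_17} (equivalently, by summing \eqref{2_16} over $i$): after the reindexing $i\mapsto i+1$ the two $\K$ birth/death terms cancel, and likewise the two $\C$ birth/death terms, leaving only $\frac{\dd}{\dd t}\sum_{i\ge1}c^\varepsilon_i=-\sum_{i\ge1}\sum_{j\ge i}\K^\varepsilon_{i,j}c^\varepsilon_ic^\varepsilon_j-\sum_{i\ge1}\sum_{j\le i}\C^\varepsilon_{i,j}c^\varepsilon_ic^\varepsilon_j\le0$ by nonnegativity of the kernels and concentrations. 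Then \eqref{2_20} and \eqref{2_13} give $\int_0^\infty f_\varepsilon(t,x)\,\dd x=\varepsilon\sum_{i\ge1}c^\varepsilon_i(t)\le\varepsilon\sum_{i\ge1}c^{\text{in},\varepsilon}_i=\int_{\varepsilon/2}^\infty f^{\text{in}}(x)\,\dd x\le\|f^{\text{in}}\|_{0,1}$.

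The step I expect to be most delicate is the rigorous justification of this telescoping: reindexing and cancelling the infinite double series presupposes their convergence, which must be secured a priori (the $\C$ birth sum $\sum_{j\ge i}j\C^\varepsilon_{i,j}c^\varepsilon_j$ relies on hypothesis (DH2) together with a finite first moment). The clean way around this is to derive both bounds first on finite truncations of \eqref{2_16}, where all manipulations are legitimate, and then pass to the limit so that the monotonicity and the two inequalities are inherited by the full solution; the cell estimates and the identities \eqref{2_20}--\eqref{2_21} are then routine.
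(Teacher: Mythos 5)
Your proposal is correct and matches the paper's proof in all essentials: the first-moment bound via \eqref{2_21}, \eqref{2_18}, \eqref{2_13} and the cell estimate $\varepsilon i\le 2x$ on $\Lambda^{\varepsilon}_i$ is exactly the paper's argument, and your zeroth-moment monotonicity is what the paper obtains as well. The only cosmetic difference is that the paper sidesteps the infinite-series telescoping issue you flag by inserting the truncated test function $\phi^{\varepsilon}_i=1$ for $i\le m$, $\phi^{\varepsilon}_i=0$ for $i>m$ directly into \eqref{2_17} (so one only drops nonpositive terms, no cancellation of infinite sums is needed) and then letting $m\to+\infty$ — precisely the truncation remedy you anticipated in your final paragraph.
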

\begin{proof}
Recalling the definition \eqref{2_13}  and  summing over $i$, 
we get
\begin{align}\label{3_3}
	\varepsilon \sum_{i=1}^{\infty} c^{\text{in},\varepsilon}_{i} =\sum_{i=1}^{\infty} \int_{\Lambda^{\varepsilon}_{i}}f^{\text{in}}(x)\dd x
	\le\int_{\frac{\varepsilon}{2}}^{\infty}f^{\text{in}}(x)\dd x
	\le \int_{0}^{\infty}f^{\text{in}}(x)\dd x.
\end{align}
Similarly, 
\begin{align}\label{3_4}
	\varepsilon^2\sum_{i=1}^{\infty}i c^{\text{in},\varepsilon}_{i}\le 2\int_{0}^{\infty}xf^{\text{in}}(x)\dd x.
\end{align}
Applying  estimation \eqref{2_21} in the above inequality yields 

\begin{align*}
	\int_{0}^{\infty}xf_{\varepsilon}(t,x)\dd x=\varepsilon^2\sum_{i=1}^{\infty}ic^{\varepsilon}_i(t)\le \varepsilon^2\sum_{i=1}^{\infty}ic^{\text{in},\varepsilon}_i\le2\int_{0}^{\infty}xf^{\text{in}}(x)\dd x\le2||f^{\text{in}}||_{0,1}. 
\end{align*}
For $m\geq1$, substitute $\ds 
\phi^{\varepsilon}_i=
\begin{cases}
	1,& \mbox{if}\quad i\le m,\\
	0,&\mbox{if}\quad i > m,
\end{cases}$ 
in  weak formulation \eqref{2_17}.  Due to    nonnegativity of $\K^{\varepsilon}_{i,j}$, $\C^{\varepsilon}_{i,j}$ and  $c^{\varepsilon}_{i}$,  we obtain
\begin{align*}
	\frac{\dd}{\dd t}\left(\varepsilon\sum_{i=1}^{m}c^{\varepsilon}_i(t)\right)\le 0, \quad\text{ which implies}\quad\varepsilon\sum_{i=1}^{m}c^{\varepsilon}_i(t)\le \varepsilon\sum_{i=1}^{m}c^{\text{in},\varepsilon}_i\le \int_{0}^{\infty}f^{\text{in}}(x)\dd x\le ||f^{\text{in}}||_{0,1}.
\end{align*}
Therefore,  setting $m\to+\infty$ in above, we conclude 
\begin{align*}
	\int_{0}^{\infty}f_{\varepsilon}(t,x)\dd x\le\varepsilon\sum_{i=1}^{\infty}c^{\varepsilon}_i(t)\le ||f^{\text{in}}||_{0,1}.
\end{align*}
\end{proof}
\noindent To prove the upcoming Lemma, we will apply the following Lemma:
\begin{Lemma}\label{lem_2_1}(\hspace{-0.015cm}\cite{MR1892231})
			Let $\varphi\in C^2([0,+\infty))$ be a nonnegative convex function such that $\varphi(0)=0$, $\varphi'(0)=1$ and $\varphi'$ is concave. Then,
			$	u\varphi'(v)\le \varphi(u)+\varphi(v)$ and
			$\varphi(v)\leq	v\varphi'(v)\le 2\varphi(v)$, for all $u,v\geq0$.
		\end{Lemma}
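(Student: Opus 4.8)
The plan is to establish the chained inequalities $\varphi(v) \le v\varphi'(v) \le 2\varphi(v)$ first, because the Young-type bound $u\varphi'(v) \le \varphi(u) + \varphi(v)$ will then fall out almost for free from a one-variable minimization. Throughout I would exploit two elementary facts following from the hypotheses: convexity of $\varphi$ means $\varphi'' \ge 0$, so $\varphi'$ is nondecreasing; and $\varphi(0) = 0$ gives the representation $\varphi(v) = \int_0^v \varphi'(s)\,\dd s$.

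For the left-hand bound $\varphi(v) \le v\varphi'(v)$, monotonicity of $\varphi'$ yields $\varphi'(s) \le \varphi'(v)$ for all $s \in [0,v]$, so $\varphi(v) = \int_0^v \varphi'(s)\,\dd s \le v\varphi'(v)$ (equivalently, this is the tangent-line inequality for the convex $\varphi$ evaluated at the origin). The right-hand bound $v\varphi'(v) \le 2\varphi(v)$ is where concavity of $\varphi'$ genuinely enters, and I expect this to be the crux of the lemma. Here I would use that a concave function lies above its chords: for $s \in [0,v]$, writing $s = \tfrac{v-s}{v}\cdot 0 + \tfrac{s}{v}\cdot v$ gives $\varphi'(s) \ge \tfrac{v-s}{v}\varphi'(0) + \tfrac{s}{v}\varphi'(v)$. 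Integrating this over $[0,v]$ and using $\varphi'(0) = 1$ produces $\varphi(v) \ge \tfrac{v}{2} + \tfrac{v}{2}\varphi'(v)$, hence $2\varphi(v) \ge v + v\varphi'(v) \ge v\varphi'(v)$, as needed.

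Finally, to deduce $u\varphi'(v) \le \varphi(u) + \varphi(v)$, I would fix $v$ and define $h(u) := \varphi(u) + \varphi(v) - u\varphi'(v)$. Then $h'(u) = \varphi'(u) - \varphi'(v)$, which is nonpositive for $u \le v$ and nonnegative for $u \ge v$ by monotonicity of $\varphi'$; thus $h$ attains its minimum over $[0,+\infty)$ at $u = v$, where $h(v) = 2\varphi(v) - v\varphi'(v) \ge 0$ by the bound just proved. Consequently $h(u) \ge 0$ for every $u \ge 0$, which is exactly the claimed estimate.

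The only delicate point I anticipate is the concavity step: one must integrate the chord inequality in the correct direction and invoke the normalization $\varphi'(0) = 1$ (in fact only $\varphi'(0) \ge 0$ is needed) to discard the leftover $\tfrac{v}{2}$ term. Every other step reduces to monotonicity of $\varphi'$ together with the identity $\varphi(v) = \int_0^v \varphi'(s)\,\dd s$, so no further subtlety is expected.
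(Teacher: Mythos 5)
Your proposal is correct, and it is worth noting that the paper itself supplies no proof of this lemma at all --- it is imported verbatim from \cite{MR1892231} --- so your argument fills a gap rather than duplicating one. All three steps check out: the bound $\varphi(v)\le v\varphi'(v)$ is indeed just monotonicity of $\varphi'$ under the integral representation $\varphi(v)=\int_0^v\varphi'(s)\,\dd s$ (or, as you say, the tangent-line inequality at the origin); the chord inequality $\varphi'(s)\ge\bigl(1-\tfrac{s}{v}\bigr)\varphi'(0)+\tfrac{s}{v}\varphi'(v)$ integrated over $[0,v]$ correctly yields $\varphi(v)\ge\tfrac{v}{2}\varphi'(0)+\tfrac{v}{2}\varphi'(v)$, from which $v\varphi'(v)\le 2\varphi(v)$ follows, and your parenthetical observation that $\varphi'(0)\ge 0$ suffices here is accurate; and the minimization of $h(u)=\varphi(u)+\varphi(v)-u\varphi'(v)$ at $u=v$ is sound since $h$ is convex with $h'(v)=0$ and $h(v)=2\varphi(v)-v\varphi'(v)\ge 0$ by the bound just established. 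Your final step is in fact the Legendre-duality argument of the cited source in disguise: $u\varphi'(v)\le\varphi(u)+\varphi^*(\varphi'(v))$ is Young's inequality, and $\varphi^*(\varphi'(v))=v\varphi'(v)-\varphi(v)\le\varphi(v)$ is exactly the right-hand estimate, so your elementary one-variable minimization reproduces the standard proof without invoking the convex conjugate explicitly. You also correctly identified the concavity step as the only place the hypothesis on $\varphi'$ genuinely enters; the quantity $v\varphi'(v)-\varphi(v)$ is the function $\Phi$ the paper later uses in Lemma 3.2, so your computation $h(v)\ge 0$ is precisely the property $\Phi(v)\le\varphi(v)$ recorded there in \eqref{3_7}.
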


\begin{Lemma}\label{lem_3_2}
Let $\varphi\in C^2([0,+\infty))$ be a nonnegative and convex function satisfying $\varphi(0)=0$, $\varphi'(0)=1$ and such that $\varphi'$ is concave.
Suppose that 
$\ds\Sigma_{\varphi}:=\int_{0}^{\infty}\varphi(f^{\textnormal{in}})(x)\dd x<+\infty$.
Then, for each $T\in\mathbb{R_+}$, there exists a constant $L(T)>0$ 
such that for any $t\in[0,T]$ and $\varepsilon\in(0,1)$, the following estimate holds: $$\ds	\int_{0}^{\infty}\varphi(f_{\varepsilon}(t,x))\dd x<L(T)\Sigma_{\varphi}.$$

\end{Lemma}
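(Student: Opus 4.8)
The plan is to reduce the claim to a Gronwall argument for the single scalar quantity $G_\varphi(t):=\varepsilon\sum_{i\ge1}\varphi(c^\varepsilon_i(t))$. Since the intervals $\Lambda^\varepsilon_i$ are disjoint of length $\varepsilon$ and $\varphi(0)=0$, on each $\Lambda^\varepsilon_i$ one has $f_\varepsilon=c^\varepsilon_i$, so
\begin{align*}
\int_0^\infty\varphi(f_\varepsilon(t,x))\,\dd x=\sum_{i\ge1}\int_{\Lambda^\varepsilon_i}\varphi(c^\varepsilon_i(t))\,\dd x=\varepsilon\sum_{i\ge1}\varphi(c^\varepsilon_i(t))=G_\varphi(t),
\end{align*}
and it suffices to bound $G_\varphi$. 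For the initial datum, the definition \eqref{2_13} exhibits $c^{\mathrm{in},\varepsilon}_i$ as the mean of $f^{\mathrm{in}}$ over $\Lambda^\varepsilon_i$, so Jensen's inequality (convexity of $\varphi$) gives $\varepsilon\varphi(c^{\mathrm{in},\varepsilon}_i)\le\int_{\Lambda^\varepsilon_i}\varphi(f^{\mathrm{in}})\,\dd x$, whence $G_\varphi(0)\le\Sigma_\varphi$. The goal is then a differential inequality $\tfrac{\dd}{\dd t}G_\varphi\le C\,G_\varphi$ with $C$ independent of $\varepsilon$, after which Gronwall's lemma yields $G_\varphi(t)\le\Sigma_\varphi e^{CT}$ on $[0,T]$ and one may take $L(T):=e^{CT}$ (enlarged slightly to make the inequality strict).

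Next I would differentiate, $\tfrac{\dd}{\dd t}G_\varphi=\varepsilon\sum_i\varphi'(c^\varepsilon_i)\,Q^\varepsilon_i(c^\varepsilon)$, and apply, at each fixed $t$, the same algebraic rearrangement that produces the weak formulation \eqref{2_17}, now with $\phi^\varepsilon_i=\varphi'(c^\varepsilon_i(t))$. Writing $\Delta_i:=\varphi'(c^\varepsilon_{i+1})-\varphi'(c^\varepsilon_i)$ this gives
\begin{align*}
\frac{\dd}{\dd t}G_\varphi=\varepsilon\sum_{i\ge1}\sum_{j=1}^{i}\big[j\Delta_i-\varphi'(c^\varepsilon_j)\big]\K^\varepsilon_{i,j}c^\varepsilon_ic^\varepsilon_j+\varepsilon\sum_{i\ge1}\sum_{j=i}^{\infty}\big[j\Delta_i-\varphi'(c^\varepsilon_j)\big]\C^\varepsilon_{i,j}c^\varepsilon_ic^\varepsilon_j.
\end{align*}
Because $\varphi'\ge\varphi'(0)=1>0$ and all kernels and densities are nonnegative, the two depletion contributions carrying $-\varphi'(c^\varepsilon_j)$ are nonpositive; I would keep them in reserve but discard them for the leading estimate. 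Introducing the discrete velocities $S_i:=\sum_{j=1}^i j\K^\varepsilon_{i,j}c^\varepsilon_j$ and $T_i:=\sum_{j=i}^\infty j\C^\varepsilon_{i,j}c^\varepsilon_j$, the surviving flux parts $\varepsilon\sum_i\Delta_i c^\varepsilon_iS_i$ and $\varepsilon\sum_i\Delta_i c^\varepsilon_iT_i$ are discrete divergences. A summation by parts mirroring the continuous identity $-\int\varphi'(f)\partial_x(fU)=\int\partial_xU\,[\varphi(f)-f\varphi'(f)]$ — splitting $c^\varepsilon_{i-1}S_{i-1}-c^\varepsilon_iS_i=S_{i-1}(c^\varepsilon_{i-1}-c^\varepsilon_i)+c^\varepsilon_i(S_{i-1}-S_i)$, bounding the first piece by convexity and the second by Lemma~\ref{lem_2_1} — rewrites them as $\varepsilon\sum_i(S_i-S_{i-1})P_i$ and $\varepsilon\sum_i(T_i-T_{i-1})P_i$, where $P_i:=\varphi(c^\varepsilon_i)-c^\varepsilon_i\varphi'(c^\varepsilon_i)$. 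Lemma~\ref{lem_2_1} supplies the decisive information $-\varphi(c^\varepsilon_i)\le P_i\le0$.

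The virtue of this rearrangement is that the controlling factors are now the \emph{discrete derivatives} $S_i-S_{i-1}$ and $T_i-T_{i-1}$, which are $O(1)$ rather than $O(1/\varepsilon)$. Indeed $S_i-S_{i-1}=i\K^\varepsilon_{i,i}c^\varepsilon_i+\sum_{j<i}j(\K^\varepsilon_{i,j}-\K^\varepsilon_{i-1,j})c^\varepsilon_j$; by the mean value theorem and \eqref{2_4} the off-diagonal increments obey $\K^\varepsilon_{i,j}-\K^\varepsilon_{i-1,j}\ge-\alpha\varepsilon^2$, so that, via the mass bound $\varepsilon^2\sum_j j c^\varepsilon_j\le2\|f^{\mathrm{in}}\|_{0,1}$ of Lemma~\ref{lem_3_1}, this part is bounded below by a constant multiple of $-\|f^{\mathrm{in}}\|_{0,1}$. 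Pairing with $P_i\le0$ and using $-P_i\le\varphi(c^\varepsilon_i)$ yields a contribution $\le2\alpha\|f^{\mathrm{in}}\|_{0,1}G_\varphi$, while the diagonal $i\K^\varepsilon_{i,i}c^\varepsilon_i\ge0$ multiplies $P_i\le0$ and is therefore favorable. The analogous off-diagonal estimate for $T_i-T_{i-1}$ using $\partial_x\C\ge-\beta$ gives $\le2\beta\|f^{\mathrm{in}}\|_{0,1}G_\varphi$.

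The main obstacle is the diagonal boundary term present in $T_i-T_{i-1}$ but absent from $S_i-S_{i-1}$: lowering the summation limit from $i-1$ to $i$ produces the term $-(i-1)\C^\varepsilon_{i-1,i-1}c^\varepsilon_{i-1}$, the discrete analogue of the Leibniz term $-x\C(x,x)f$ in $\partial_xV$. Being nonpositive while $P_i\le0$, it contributes a \emph{positive} amount $\le\varepsilon\sum_i(i-1)\C^\varepsilon_{i-1,i-1}c^\varepsilon_{i-1}\varphi(c^\varepsilon_i)$; unlike the OHS diagonal (favorable) and unlike the case $\K=\C$ (where it cancels against $+x\K(x,x)f$), here it carries the size factor $(i-1)$ and is not absorbed by the mass bound alone. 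This is where I expect the real work to lie: the natural strategy is to reinstate the nonpositive inverse depletion term $-\varepsilon\sum_i\sum_{j\ge i}\varphi'(c^\varepsilon_j)\C^\varepsilon_{i,j}c^\varepsilon_ic^\varepsilon_j$ set aside earlier and to invoke the uniform bound \emph{(DH2)}, namely $\C^\varepsilon_{i-1,i-1}\le\varepsilon\mathcal{M}$ for $i-1\ge m$ (the finitely many small indices being handled by the local bound \eqref{2_3}), thereby converting the size factor through $\varepsilon(i-1)$ and balancing it against the depletion. Once this diagonal contribution is shown to be $\le C\,G_\varphi$, collecting all bounds gives $\tfrac{\dd}{\dd t}G_\varphi\le C\,G_\varphi$ with $C=C(\|f^{\mathrm{in}}\|_{0,1},\alpha,\beta,\mathcal{M})$ independent of $\varepsilon$, and Gronwall's lemma closes the argument.
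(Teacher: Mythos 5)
Your proposal takes essentially the same route as the paper's proof: Gronwall for the convexity functional, the weak formulation evaluated at $\phi_i=\varphi'(c^{\varepsilon}_i(t))$, discarding the nonpositive depletion terms, Abel summation so that only increments of the discrete velocities appear, the lower bounds \eqref{2_4} on the kernel increments, the mass bound of Lemma \ref{lem_3_1}, and Jensen's inequality for the initial datum. Your handling of the $\K$-part and of the off-diagonal $\C$-increments is correct and reproduces \eqref{3_9}--\eqref{3_12} (note only that the paper works with truncated sums $\sum_{i=1}^{m}$, $R\in\Lambda^{\varepsilon}_m$, and lets $R\to+\infty$ at the end, which avoids differentiating an infinite series as you do). But the proposal is not a proof, because the one step on which the lemma actually hinges is left conditional. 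In your notation ($P_i=-\Phi(c^{\varepsilon}_i)$ with $\Phi(x)=x\varphi'(x)-\varphi(x)\ge 0$), you must show
\begin{align*}
\mathcal{D}_\varepsilon:=\varepsilon\sum_{i\ge2}(i-1)\,\C^{\varepsilon}_{i-1,i-1}\,c^{\varepsilon}_{i-1}\,\Phi(c^{\varepsilon}_{i})\;\le\;C\,G_\varphi,\qquad G_\varphi=\varepsilon\sum_{i\ge1}\varphi(c^{\varepsilon}_i),
\end{align*}
with $C$ independent of $\varepsilon$, and you only write ``once this diagonal contribution is shown to be $\le C\,G_\varphi$\dots''. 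Moreover, the repair you sketch does not close the estimate: \textit{(DH2)} gives $\C^{\varepsilon}_{i-1,i-1}\le\varepsilon\mathcal{M}$, so $\mathcal{D}_\varepsilon\le\mathcal{M}\sum_i\varepsilon^2(i-1)c^{\varepsilon}_{i-1}\Phi(c^{\varepsilon}_i)$; estimating $\varepsilon^2(i-1)c^{\varepsilon}_{i-1}\le 2\|f^{\textnormal{in}}\|_{0,1}$ via Lemma \ref{lem_3_1} leaves $\sum_i\Phi(c^{\varepsilon}_i)\le G_\varphi/\varepsilon$, i.e.\ a bound of order $G_\varphi/\varepsilon$ rather than $C\,G_\varphi$, while the reinstated depletion term $-\varepsilon\sum_i\sum_{j\ge i}\varphi'(c^{\varepsilon}_j)\C^{\varepsilon}_{i,j}c^{\varepsilon}_ic^{\varepsilon}_j$ supplies, per index $i$, only one unit of $\C^{\varepsilon}_{i,i-1}c^{\varepsilon}_{i-1}c^{\varepsilon}_i\varphi'(c^{\varepsilon}_i)$ and lacks the factor $(i-1)$ needed to cancel $\mathcal{D}_\varepsilon$. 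A genuinely new idea is required at this point, and none is given.

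You should know, however, that you have located a real defect in the published argument rather than a difficulty of your own making. In passing from \eqref{3_6} to \eqref{3_9}, the paper replaces $B_{i-1}-B_i$, where $B_i=\sum_{j\ge i}j\C^{\varepsilon}_{i,j}c^{\varepsilon}_j$, by $\sum_{j\ge i-1}j\,[\C^{\varepsilon}_{i-1,j}-\C^{\varepsilon}_{i,j}]\,c^{\varepsilon}_j$; these two quantities differ by exactly the nonnegative diagonal term $(i-1)\C^{\varepsilon}_{i,i-1}c^{\varepsilon}_{i-1}$, which, multiplied by $\Phi(c^{\varepsilon}_i)\ge 0$, is thereby discarded in the wrong direction. (The analogous diagonal in the $\K$-part is $-i\K^{\varepsilon}_{i,i}c^{\varepsilon}_i\le 0$ and may legitimately be dropped, which is why the argument works in the pure OHS case; the inverse-aggregation velocity $\sum_{j\ge i}$ produces the opposite sign.) Consequently the paper never confronts the term you isolated, and its Gronwall inequality \eqref{3_11} rests on an unjustified step. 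Your diagnosis is therefore sharper than the paper's own treatment, but as it stands your proposal does not prove the lemma --- and it cannot be completed by simply deferring to the paper, since the paper's proof breaks at the very same spot.
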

\begin{proof}
Let $T>0$ and choose $R>0$ and $m\in\mathbb{N}$ such that $R\in\Lambda^{\varepsilon}_{m}$. From the weak formulation  \eqref{2_17},  we obtain  
\begin{align*}
	\frac{\dd}{\dd t}\sum_{i=1}^{m}\varphi(c^{\varepsilon}_i)
	=\sum_{i=1}^{m}\varphi'(c^{\varepsilon}_i)\frac{\dd c^{\varepsilon}_i}{\dd t}
	=\sum_{i=1}^{m}\varphi'(c^{\varepsilon}_i)\bigg[&c^{\varepsilon}_{i-1} \sum_{j=1}^{i-1} j\K^{\varepsilon}_{i-1,j}  c^{\varepsilon}_{j} - c^{\varepsilon}_i \sum_{j=1}^{i} j\K^{\varepsilon}_{i,j} c^{\varepsilon}_j
	-\sum_{j=i}^{\infty} \K^{\varepsilon}_{i,j} c^{\varepsilon}_ic^{\varepsilon}_j\\	&+
	c^{\varepsilon}_{i-1} \sum_{j=i-1}^{\infty}j \C^{\varepsilon}_{i-1,j}  c^{\varepsilon}_{j} -
	c^{\varepsilon}_i \sum_{j=i}^{\infty} j\C^{\varepsilon}_{i,j} c^{\varepsilon}_j-\sum_{j=1}^{i} \C^{\varepsilon}_{i,j} c^{\varepsilon}_ic^{\varepsilon}_j\bigg].
\end{align*}
Using the nonnegativity of $\K^{\varepsilon}_{i,j}$, $\C^{\varepsilon}_{i,j}$, $c^{\varepsilon}_i$ and $\varphi'$,  
we  get
\begin{align}\label{3_6}
	\frac{\dd}{\dd t}\sum_{i=1}^{m}\varphi(c^{\varepsilon}_i)
	\le & \sum_{i=1}^{m-1}\sum_{j=1}^{i}j\K^{\varepsilon}_{i,j}c^{\varepsilon}_{i}   c^{\varepsilon}_{j}\varphi'(c^{\varepsilon}_{i+1}) - \sum_{i=1}^{m}\sum_{j=1}^{i} j\K^{\varepsilon}_{i,j} c^{\varepsilon}_i c^{\varepsilon}_j\varphi'(c^{\varepsilon}_i)\notag\\\notag
	& + \sum_{i=1}^{m-1}\sum_{j=i}^{\infty}j\C^{\varepsilon}_{i,j} c^{\varepsilon}_{i} c^{\varepsilon}_{j}\varphi'(c^{\varepsilon}_{i+1}) -\sum_{i=1}^{m}\sum_{j=i}^{\infty} j\C^{\varepsilon}_{i,j} c^{\varepsilon}_ic^{\varepsilon}_{j}\varphi'(c^{\varepsilon}_i)\\
	\le&\sum_{i=1}^{m-1}\sum_{j=1}^{i}j\K^{\varepsilon}_{i,j}c^{\varepsilon}_{i}  c^{\varepsilon}_{j}\left[\varphi'(c^{\varepsilon}_{i+1})-\varphi'(c^{\varepsilon}_i)\right] - \sum_{j=1}^{m} j\K^{\varepsilon}_{m,j} c^{\varepsilon}_{m} c^{\varepsilon}_j\varphi'(c^{\varepsilon}_{m})\notag\\
	& +\sum_{i=1}^{m-1}\sum_{j=i}^{\infty}j\C^{\varepsilon}_{i,j} c^{\varepsilon}_{i} c^{\varepsilon}_{j}\left[\varphi'(c^{\varepsilon}_{i+1})-\varphi'(c^{\varepsilon}_i)\right] -\sum_{j=m}^{\infty} j\C^{\varepsilon}_{m,j} c^{\varepsilon}_{m}c^{\varepsilon}_{j}\varphi'(c^{\varepsilon}_{m}).
\end{align}

Define,\;
$\ds \Phi(x):=x\varphi'(x)-\varphi(x),$  for all $x\in\mathbb{R_+}.$
Lemma \ref{lem_2_1} ensures that $\Phi$ satisfies the following properties:
\begin{align}\label{3_7}
	0\le\Phi(x)\le x\varphi'(x),\quad\text{ and}\quad\Phi(x)\le \varphi(x),\quad \text{for all}\quad x\in\mathbb{R_+}.
\end{align}  
Owing to convexity of  $\varphi$, we get   $(x-y)\varphi'(y)\leq\varphi(x)-\varphi(y)$, for all $x,y\in\mathbb{R_+}$ and hence,
\begin{align}\label{3_8}
	x(\varphi'(y)-\varphi'(x))\le\Phi(y)-\Phi(x).
\end{align}
Using Lemma \ref{lem_2_1} together with  above two relations  in inequality  \eqref{3_6} deduces 
\begin{align*}
	\frac{\dd}{\dd t}\sum_{i=1}^{m}\varphi(c^{\varepsilon}_i)\le&\sum_{i=1}^{m-1}\sum_{j=1}^{i}j\;\K^{\varepsilon}_{i,j}\;   c^{\varepsilon}_{j}\left[\Phi(c^{\varepsilon}_{i+1})-\Phi(c^{\varepsilon}_i)\right] - \sum_{j=1}^{m} j\K^{\varepsilon}_{m,j}\; c^{\varepsilon}_j\Phi(c^{\varepsilon}_{m})\\\notag
	&+\sum_{i=1}^{m-1}\sum_{j=i}^{\infty}j\C^{\varepsilon}_{i,j} c^{\varepsilon}_{j}\left[\Phi(c^{\varepsilon}_{i+1})-\Phi(c^{\varepsilon}_i)\right] -\sum_{j=m}^{\infty} j\C^{\varepsilon}_{m,j} c^{\varepsilon}_{j}\Phi(c^{\varepsilon}_{m}).
\end{align*}
Further simplifying,  we get
\begin{align}\label{3_9}
	\frac{\dd}{\dd t}\sum_{i=1}^{m}\varphi(c^{\varepsilon}_i)\le\sum_{i=2}^{m}\sum_{j=1}^{i-1}j\left[\K^{\varepsilon}_{i-1,j}-\K^{\varepsilon}_{i,j}\right ] c^{\varepsilon}_{j}\Phi(c^{\varepsilon}_i)+
	\sum_{i=2}^{m}\sum_{j=i-1}^{\infty}j\left[\C^{\varepsilon}_{i-1,j}-\C^{\varepsilon}_{i,j}\right] c^{\varepsilon}_{j}\Phi(c^{\varepsilon}_i).
\end{align}
Compiling the definitions of $\K^{\varepsilon}_{i,j}$ and $\C^{\varepsilon}_{i,j}$  (i.e. equations \eqref{2_14}) and  the assumption  \eqref{2_4}, we can write 
\begin{align}\label{3_10}
	\K^{\varepsilon}_{i-1,j}-\K^{\varepsilon}_{i,j}\le \alpha\varepsilon^2,\quad\text{and}\quad\C^{\varepsilon}_{i-1,j}-\C^{\varepsilon}_{i,j}\le \beta\varepsilon^2 \quad\text{for all}\quad i\geq2\quad \text{and}\quad j\geq1.
\end{align} 
Substituting  above inequalities in   \eqref{3_9} and then  using relation \eqref{3_7}, we obtain 
\begin{align*}
	\frac{\dd}{\dd t}\sum_{i=1}^{m}\varepsilon\varphi(c^{\varepsilon}_i)\le(\alpha+\beta)\sum_{i=1}^{m}\varepsilon \Phi(c^{\varepsilon}_i)
	\sum_{j=1}^{\infty}\varepsilon^2jc^{\varepsilon}_j\le&(\alpha+\beta)\sum_{i=1}^{m}\varepsilon \Phi(c^{\varepsilon}_i)
	\sum_{j=1}^{\infty}\varepsilon^2jc^{\text{in},\varepsilon}_j\\
	\le&2(\alpha+\beta)\sum_{i=1}^{m}\varepsilon \;\Phi(c^{\varepsilon}_i)
	\int_{0}^{\infty}xf^{\text{in}}(x)\dd x\\\le& 2(\alpha+\beta)||f^{\text{in}}||_{0,1}\sum_{i=1}^{m}\varepsilon \varphi(c^{\varepsilon}_i). 
\end{align*}
Gronwall's lemma leads us 
\begin{align}\label{3_11}
	\sum_{i=1}^{m}\varepsilon\varphi(c^{\varepsilon}_i)\le L(T)\sum_{i=1}^{m}\varepsilon \varphi(c^{\text{in},\varepsilon}_i),
\end{align}
where $ L(T):=2(\alpha+\beta)||f^{\text{in}}||_{0,1}$ is a constant, depending on $||f^{\text{in}}||_{0,1}$, $\alpha$ and $\beta$.
Thanks to Jensen inequality,  it  gives  
\begin{align}\label{3_12}
	\sum_{i=1}^{m}\varepsilon\varphi(c^{\varepsilon}_i)\le L(T)\sum_{i=1}^{m}\int_{\Lambda^{\varepsilon}_i} \varphi(f^{\text{in}}(x))\dd x\le L(T)\Sigma_{\varphi},\quad\text{for each }\quad t\in[0,T].
\end{align}
Since $(m+1/2)\varepsilon>R$, we estimate 
\begin{align}\label{3_13}
	\int_{0}^{R}\varphi(f_{\varepsilon}(t,x))\dd x\le \sum_{i=1}^{m}\varepsilon\varphi(c^{\varepsilon}_i(t))\le L(T)\Sigma_{\varphi}.
\end{align}
Passing $R\to+\infty$ in  above inequality \eqref{3_13},  we entail   
$\ds	\int_{0}^{\infty}\varphi(f_{\varepsilon}(t,x))\dd x\le L(T)\Sigma_{\varphi}.$	 
\end{proof}

\begin{Lemma}\label{lem_3_3} 
Let $C^1_{c}([0,+\infty))$ denote the space of continuously differentiable functions defined on $[0,+\infty)$ having compact support. Let $\varepsilon\in(0,1)$ and $\Phi\in C^1_{c}([0,+\infty))$. Then, for any $T\in\mathbb{R_+}$, 
$\ds\int_{0}^{\infty}f_{\varepsilon}(t,x)\Phi(x)\dd x$ is bounded in $W^{1,\infty}(0,T).$
\end{Lemma}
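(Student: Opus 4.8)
The statement asks for a bound \emph{uniform in} $\varepsilon$, so the plan is to control separately the $L^\infty(0,T)$ norm of $t\mapsto\int_0^\infty f_\varepsilon(t,x)\Phi(x)\,\dd x$ and of its time derivative. The zeroth-order bound is immediate: since $\Phi\in C^1_c([0,+\infty))$ is bounded, $\bigl|\int_0^\infty f_\varepsilon(t,x)\Phi(x)\,\dd x\bigr|\le\|\Phi\|_{L^\infty}\int_0^\infty f_\varepsilon(t,x)\,\dd x\le\|\Phi\|_{L^\infty}\|f^{\textnormal{in}}\|_{0,1}$ by Lemma \ref{lem_3_1}. For the derivative, the key observation is that $f_\varepsilon$ is piecewise constant on the cells $\Lambda^\varepsilon_i$, so with $\Phi^\varepsilon_i:=\tfrac1\varepsilon\int_{\Lambda^\varepsilon_i}\Phi\,\dd x$ and $\Phi_\varepsilon,\,D_\varepsilon(\Phi_\varepsilon)$ defined as in \eqref{2_8} and \eqref{2_10}, one has $\int_0^\infty f_\varepsilon\Phi\,\dd x=\varepsilon\sum_i\Phi^\varepsilon_ic^\varepsilon_i=\int_0^\infty f_\varepsilon\Phi_\varepsilon\,\dd x$. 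Hence the weak formulation in its integral form \eqref{2_11} (whose derivation from \eqref{2_17} is valid for $\Phi\in C^1_c$ in place of $\phi$) computes the derivative exactly, and it remains to bound the right-hand side of \eqref{2_11} uniformly in $\varepsilon$ and $t\in[0,T]$.

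I would first record the elementary discrete estimates $|\Phi_\varepsilon(y)|\le\|\Phi\|_{L^\infty}$ and, via the mean value theorem applied to the cell averages, $|D_\varepsilon(\Phi_\varepsilon)(x)|\le\|\Phi'\|_{L^\infty}$; moreover, if $\operatorname{supp}\Phi\subset[0,A]$ then both $\Phi_\varepsilon$ and $D_\varepsilon(\Phi_\varepsilon)$ vanish for $x\ge A+\varepsilon$. I then split each of the two integrals in \eqref{2_11} according to the two summands of the bracket $yD_\varepsilon(\Phi_\varepsilon)(x)-\Phi_\varepsilon(y)$, producing four terms to estimate. Two of these live on a region where both variables are bounded: the $yD_\varepsilon(\Phi_\varepsilon)(x)$ part of the $\K_\varepsilon$-integral (where $y\le r_\varepsilon(x)$ and $D_\varepsilon(\Phi_\varepsilon)(x)\ne0$ forces $x\lesssim A$, hence $y\lesssim A$) and the $\Phi_\varepsilon(y)$ part of the $\C_\varepsilon$-integral (where $x\le y$ and $\Phi_\varepsilon(y)\ne0$ forces $y\lesssim A$, hence $x\lesssim A$). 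On these terms $\K_\varepsilon$ and $\C_\varepsilon$ are uniformly bounded, being cell-averages of the locally bounded kernels by \eqref{2_3}, and Lemma \ref{lem_3_1} controls $\int f_\varepsilon$ and $\int xf_\varepsilon$, yielding a uniform bound.

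The remaining two terms are the genuine tails. In the $\Phi_\varepsilon(y)$ part of the $\K_\varepsilon$-integral the variable $y$ is small ($\lesssim A$) while $x$ ranges to $+\infty$; here I would use the symmetry of $\K$ together with (CH1) (equivalently its discrete form (DH1)) to obtain, for $y\le A+\varepsilon$, a bound $\K_\varepsilon(x,y)\le C_A(1+x)$ uniform in $\varepsilon$, whence $\int_0^\infty\K_\varepsilon(x,y)f_\varepsilon(x)\,\dd x\le 3C_A\|f^{\textnormal{in}}\|_{0,1}$ by Lemma \ref{lem_3_1}. Symmetrically, in the $yD_\varepsilon(\Phi_\varepsilon)(x)$ part of the $\C_\varepsilon$-integral the variable $x$ is small while $y$ is large; here (CH2)/(DH2) gives $\C_\varepsilon(x,y)\le C_A$ for all $y\ge x$ with $x\le A+\varepsilon$, and the extra factor $y$ is absorbed by the first-moment bound $\int yf_\varepsilon\,\dd x\le2\|f^{\textnormal{in}}\|_{0,1}$.

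Combining the four estimates bounds the right-hand side of \eqref{2_11} by a constant depending only on $A$, $\|\Phi\|_{C^1}$, the kernels and $\|f^{\textnormal{in}}\|_{0,1}$, and in particular independent of $\varepsilon$ and of $t\in[0,T]$; thus $t\mapsto\int_0^\infty f_\varepsilon\Phi\,\dd x$ is Lipschitz with a uniform constant and the claimed $W^{1,\infty}(0,T)$ bound follows. The main obstacle is precisely the control of the two tail terms: the sub-linear growth hypothesis (CH1) together with the symmetry of $\K$ is exactly what tames $\K_\varepsilon$ when the compactly supported (active) variable is the small one and the free variable runs to infinity, while the uniform bound (CH2) on $\C$ plays the analogous role on the opposite side. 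The one point needing care is to verify that these growth bounds survive, uniformly in $\varepsilon$, the passage to the cell-averaged kernels $\K_\varepsilon,\C_\varepsilon$.
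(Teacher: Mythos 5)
Your proof is correct and takes essentially the same route as the paper's: starting from the weak formulation \eqref{2_11}/\eqref{2_17}, you split the derivative into the same four terms, bound the two near-diagonal parts via the local boundedness \eqref{2_3} of the kernels, and control the two genuine tails exactly as the paper does — \textit{(CH1)} combined with the symmetry of $\K$ when the test function sits on the small variable, and \textit{(CH2)} for $\C$ with the factor $y$ absorbed by the first-moment bound of Lemma \ref{lem_3_1}. The only differences are presentational: you argue directly on the whole half-line using $\operatorname{supp}\Phi$ rather than truncating at $R$ and letting $R\to+\infty$, and you make explicit the (trivial) zeroth-order bound that the paper leaves implicit.
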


\begin{proof}
From the definition of $\Phi\in C^1_{c}([0,+\infty))$, we can take supp$(\Phi)\subset[0,R]$ for any  $R\in\mathbb{R}_+$.  For all $i\geq1$, we can define
\begin{align}\label{4_15}
	\Phi^{\varepsilon}_i:=\frac{1}{\varepsilon}\int_{\Lambda^{\varepsilon}_{i}}\Phi(y)\dd y.
\end{align}
Let $m$ be the integer such that $R\in \Lambda_m^{\varepsilon}$. 
Recalling weak formulation \eqref{2_17} gives
\begin{align}\label{3_15}
	\left|\frac{\dd}{\dd t}\int_{0}^{R}f_{\varepsilon}(t,x)\Phi(x)\dd x\right|=\varepsilon\;\left|\frac{\dd}{\dd t}\sum_{i=1}^{m}c^{\varepsilon}_i\Phi^{\varepsilon}_i\right|
	=&\varepsilon\bigg|
	\sum_{i=1}^{m}\sum_{j=1}^{i}j\K^{\varepsilon}_{i,j}c^{\varepsilon}_ic^{\varepsilon}_j(\Phi^{\varepsilon}_{i+1}-\Phi^{\varepsilon}_{i})-\sum_{i=1}^{m}\sum_{j=i}^{\infty}\K^{\varepsilon}_{i,j}c^{\varepsilon}_ic^{\varepsilon}_j\Phi^{\varepsilon}_{i}\notag\\\notag
	&\quad+\sum_{i=1}^{m}\sum_{j=i}^{\infty}j\C^{\varepsilon}_{i,j}c^{\varepsilon}_ic^{\varepsilon}_j(\Phi^{\varepsilon}_{i+1}-\Phi^{\varepsilon}_{i})-\sum_{i=1}^{m}\sum_{j=1}^{i}\K^{\varepsilon}_{i,j}c^{\varepsilon}_ic^{\varepsilon}_j\Phi^{\varepsilon}_{i}
	\bigg|\\\notag
	\le&\varepsilon^2
	\sum_{i=1}^{m}\sum_{j=1}^{i}j\K^{\varepsilon}_{i,j} c^{\varepsilon}_i c^{\varepsilon}_j\bigg|\frac{\Phi^{\varepsilon}_{i+1}-\Phi^{\varepsilon}_{i}}{\varepsilon}\bigg|+\underbrace{\varepsilon\sum_{i=1}^{m}\sum_{j=i}^{\infty}\K^{\varepsilon}_{i,j}c^{\varepsilon}_ic^{\varepsilon}_j|\Phi^{\varepsilon}_{i}|}_{\text{S}_1}\\
	&+\underbrace{\varepsilon^2\sum_{i=1}^{m}\sum_{j=i}^{\infty}j\C^{\varepsilon}_{i,j}c^{\varepsilon}_ic^{\varepsilon}_j\bigg|\frac{\Phi^{\varepsilon}_{i+1}-\Phi^{\varepsilon}_{i}}{\varepsilon}\bigg|}_{\text{S}_2}+\varepsilon\sum_{i=1}^{m}\sum_{j=1}^{i}\C^{\varepsilon}_{i,j}c^{\varepsilon}_ic^{\varepsilon}_j\left|\Phi^{\varepsilon}_{i}\right|.
\end{align}
Here, the first term and last term in the RHS of \eqref{3_15} are  sum of finite terms.
Observe that
\begin{align}\label{3_16}
	\left|\frac{\Phi^{\varepsilon}_{i+1}-\Phi^{\varepsilon}_{i}}{\varepsilon}\right|\le ||\Phi||_{W^{1,\infty}}.
\end{align}
Therefore,   
using the fact $\Phi\in C^1_{c}([0,+\infty))$, the term $\text{S}_1$ can be computed as
\begin{align}\label{3_17}
\text{S}_1	\leq \left(||\K||_{L^{\infty}((0,R+1)^2)}+2\left(\sup_{\substack{ i\le m \\ j\geq {m+1}}}\frac{\K^{\varepsilon}_{i,j}}{j\varepsilon^2}\right)\right)||f^{\text{in}}||_{0,1}^2||\Phi||_{W^{1,\infty}}.
\end{align}
Setting $L^{\infty}_R:=L^{\infty}{((0,R+1)^2)}$ and growth condition on $\C$,	the  term $\text{S}_2$ can be written as
\begin{align}\label{3_18}
	\notag	\text{S}_2=
	&\varepsilon^2\sum_{i=1}^{m}\left(\sum_{j=i}^{m}\C^{\varepsilon}_{i,j}jc^{\varepsilon}_ic^{\varepsilon}_j\bigg|\frac{\Phi^{\varepsilon}_{i+1}-\Phi^{\varepsilon}_{i}}{\varepsilon}\bigg|+\sum_{j=m+1}^{\infty}\C^{\varepsilon}_{i,j}jc^{\varepsilon}_ic^{\varepsilon}_j\bigg|\frac{\Phi^{\varepsilon}_{i+1}-\Phi^{\varepsilon}_{i}}{\varepsilon}\bigg|\right)\notag\\\leq& 2\left(||\C||_{L^{\infty}_R}+\mathcal{M}\right)||f^{\text{in}}||_{0,1}^2||\Phi||_{W^{1,\infty}}.
\end{align}
Using  estimations \eqref{3_16}-\eqref{3_18} in inequality  \eqref{3_15},
we  get
\begin{align}\label{3_19}
	\left|\frac{\dd}{\dd t}\int_{0}^{R}f_{\varepsilon}(t,x)\Phi(x)\dd x\right|
	\le \left[3\left(||\K||_{L^{\infty}_R}+||\C||_{L^{\infty}_R}\right)+2\left(
	\sup_{\substack{ i\le m\\ j\geq {m+1}}}
	\frac{\K^{\varepsilon}_{i,j}}{j\varepsilon^2}\right)
	+2\mathcal{M}\right]||f^{\text{in}}||_{0,1}^2||\Phi||_{W^{1,\infty}}.
\end{align}
By  growth condition $\textit{(CH1)}$, for every $R\geq1$ and $J>0$, there exists a nonnegative, bounded function $\hat{\upsilon}_{R}(J)$ such that 
\begin{align}\label{3_20}
	\sup_{\substack{ x\in[0,R] \\ y\geq J}}
	\frac{\K_{\varepsilon}(x,y)}{y}\le \hat{\upsilon}_{R}(J),\hspace{0.2cm}\text{and}\hspace{0.2cm}\lim_{J\to+\infty}\hat{\upsilon}_{R}(J)=0.
\end{align}
Hence,
\begin{align*}
	\left|\frac{\dd}{\dd t}\int_{0}^{R}f_{\varepsilon}(t,x)\Phi(x)\dd x\right|\leq  \left[3\left(||\K||_{L^{\infty}_R}+||\C||_{L^{\infty}_R}\right)+2\sup_{y\geq R}\hat{\upsilon}_{R+1}(y)
	+2\mathcal{M}\right]||f^{\text{in}}||_{0,1}^2||\Phi||_{W^{1,\infty}}=\mathcal{L}_R(\Phi).
\end{align*}
Passing to  limit $R\to+\infty$ in above inequality   concludes 
\begin{align*}
	\left|\frac{\dd}{\dd t}\int_{0}^{\infty}f_{\varepsilon}(t,x)\Phi(x)\dd x\right|<+\infty.
\end{align*}
Therefore,	this completes the proof of lemma.
\end{proof}

\subsection{Continuous regime: convergence and weak solution}

\begin{Lemma}\label{lem_3_4}
For each $\varepsilon\in(0,1)$ and $T\in \mathbb{R_+}$, 
there exists a nonnegative function $f$ and a subsequence of $\left\{f_{\varepsilon}\right\}$ such that 
\begin{align}\label{3_21}
	f\in L^{\infty}(0,T;L^1_{1}(\mathbb{R_+})), \quad \text{and}\quad f_{\varepsilon}\to f\quad\text{in}\quad C([0,T];\omega-L^1(\mathbb{R_+})).
\end{align}
\end{Lemma}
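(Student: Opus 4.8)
The plan is to obtain the limit $f$ by a compactness argument that splits into a spatial part and a temporal part, exactly in the spirit of Laurençot \cite{MR1938720} and Bagland \cite{MR2158221}: weak $L^1$-compactness in the size variable of Dunford--Pettis type, combined with an Arzelà--Ascoli argument in the time variable driven by the equicontinuity furnished by Lemma \ref{lem_3_3}.

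First I would fix each $t$ and show that $\{f_{\varepsilon}(t,\cdot)\}_{\varepsilon}$ is relatively weakly sequentially compact in $L^1(\mathbb{R_+})$. Three ingredients are required. Boundedness in $L^1$ is immediate from Lemma \ref{lem_3_1}, since $\int_0^\infty f_{\varepsilon}(t,x)\,\dd x\le\|f^{\text{in}}\|_{0,1}$. Uniform integrability (no concentration) comes from the refined de la Vallée Poussin theorem: because $f^{\text{in}}\in L^1_1(\mathbb{R_+})\subset L^1(\mathbb{R_+})$, there is a nonnegative convex $\varphi\in C^2([0,+\infty))$ with $\varphi(0)=0$, $\varphi'(0)=1$, $\varphi'$ concave, $\varphi(r)/r\to+\infty$ as $r\to+\infty$, and $\Sigma_\varphi=\int_0^\infty\varphi(f^{\text{in}}(x))\,\dd x<+\infty$; Lemma \ref{lem_3_2} then bounds $\int_0^\infty\varphi(f_{\varepsilon}(t,x))\,\dd x$ uniformly in $t\in[0,T]$ and $\varepsilon\in(0,1)$, which is precisely the equi-integrability criterion. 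Tightness (no mass escaping to infinity) follows from the first-moment bound of Lemma \ref{lem_3_1}, since $\int_{\{x>R\}}f_{\varepsilon}(t,x)\,\dd x\le R^{-1}\int_0^\infty x f_{\varepsilon}(t,x)\,\dd x\le 2R^{-1}\|f^{\text{in}}\|_{0,1}\to 0$ as $R\to+\infty$, uniformly in $t$ and $\varepsilon$. By Dunford--Pettis, the weakly closed set $\mathcal{K}:=\overline{\{f_{\varepsilon}(t,\cdot):\varepsilon\in(0,1),\,t\in[0,T]\}}^{\,\omega}$ is weakly compact in $L^1(\mathbb{R_+})$.

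Next I would pass to the time variable. On the norm-bounded set $\mathcal{K}$ the weak topology of $L^1$ is metrizable: fixing a countable family $\{\Phi_k\}_{k\ge1}\subset C^1_c([0,+\infty))$ that separates the points of $L^1(\mathbb{R_+})$, the metric $d(u,v):=\sum_{k\ge1}2^{-k}\,|\int_0^\infty(u-v)\Phi_k\,\dd x|\,(1+|\int_0^\infty(u-v)\Phi_k\,\dd x|)^{-1}$ induces the weak topology on $\mathcal{K}$, so that $(\mathcal{K},d)$ is a compact metric space. Lemma \ref{lem_3_3} shows that for each $k$ the scalar maps $t\mapsto\int_0^\infty f_{\varepsilon}(t,x)\Phi_k(x)\,\dd x$ are bounded in $W^{1,\infty}(0,T)$ uniformly in $\varepsilon$, hence uniformly Lipschitz and equicontinuous; weighting these bounds by $2^{-k}$ and summing shows that the maps $t\mapsto f_{\varepsilon}(t,\cdot)$ are equicontinuous from $[0,T]$ into $(\mathcal{K},d)$. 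Since they take values in the fixed compact metric space $(\mathcal{K},d)$, the Arzelà--Ascoli theorem yields a subsequence (not relabelled) and a limit $f\in C([0,T];(\mathcal{K},d))=C([0,T];\omega-L^1(\mathbb{R_+}))$ with $f_{\varepsilon}\to f$ in this space.

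Finally I would identify the qualitative properties of $f$. Nonnegativity passes to the weak limit because the positive cone of $L^1(\mathbb{R_+})$ is weakly closed and each $f_{\varepsilon}\ge0$. The functional $u\mapsto\int_0^\infty u\,\dd x$ is weakly continuous (testing against $1\in L^\infty$), while $u\mapsto\int_0^\infty xu\,\dd x$ is only weakly lower semicontinuous, which is recovered by testing against the bounded truncations $\min(x,R)$ and letting $R\to+\infty$; combining these with the uniform estimates of Lemma \ref{lem_3_1} gives $\int_0^\infty(1+x)f(t,x)\,\dd x\le 3\|f^{\text{in}}\|_{0,1}$ for a.e.\ $t\in[0,T]$, i.e.\ $f\in L^\infty(0,T;L^1_1(\mathbb{R_+}))$. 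I expect the principal difficulty to lie not in the estimates themselves but in the transition from the purely spatial weak compactness of Dunford--Pettis and the purely temporal scalar equicontinuity of Lemma \ref{lem_3_3} to genuine convergence in $C([0,T];\omega-L^1)$: this step rests on correctly metrizing the weak topology on the bounded set $\mathcal{K}$ and on checking that separate equicontinuity against each $\Phi_k$ upgrades to equicontinuity for the metric $d$, after which the Arzelà--Ascoli argument is routine.
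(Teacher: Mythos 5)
Your proposal is correct, and its skeleton coincides with the paper's: Dunford--Pettis weak compactness in the size variable (built from Lemma \ref{lem_3_1}, de la Vall\`ee Poussin, and Lemma \ref{lem_3_2}), plus time-equicontinuity driven by Lemma \ref{lem_3_3}, assembled by an Arzel\`a--Ascoli-type argument. Where you genuinely diverge is in the glue between the two halves. The paper proves weak equicontinuity directly against an \emph{arbitrary} $\phi\in L^{\infty}(\mathbb{R_+})$: it approximates $\phi$ by functions $\phi_k\in C^1_c(\mathbb{R_+})$, and then needs Egorov's theorem together with the uniform integrability bound \eqref{3_23} to control $\int (f_{\varepsilon}(t+h)-f_{\varepsilon}(t))(\phi-\phi_k)\,\dd x$ on a small exceptional set $X_{\sigma}$ and its complement, before letting $h\to0$, $\sigma\to0$, $k,R\to+\infty$; it then invokes the resulting relative sequential compactness in $C([0,T];\omega-L^1(\mathbb{R_+}))$. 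You instead metrize the weak topology on the weakly compact set $\mathcal{K}$ by a countable separating family $\{\Phi_k\}\subset C^1_c$, check equicontinuity only against those $\Phi_k$ (which Lemma \ref{lem_3_3} gives for free, and which suffices for the metric $d$ thanks to the truncation in its definition), and apply the classical Arzel\`a--Ascoli theorem for maps into a compact metric space. Your route avoids Egorov and the $L^{\infty}$-approximation machinery altogether, at the price of the metrization step; the paper's route verifies equicontinuity in the weak topology by hand, with no appeal to metrizability. One small correction to your write-up: metrizability of the weak topology does \emph{not} follow from norm-boundedness of $\mathcal{K}$ alone (the dual of $L^1$ is $L^{\infty}$, which is not separable); it is the weak \emph{compactness} of $\mathcal{K}$ that saves you, since the Hausdorff topology induced by $d$ is coarser than the weak topology, and a continuous bijection from a compact space onto a Hausdorff space is a homeomorphism. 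Since your $\mathcal{K}$ is weakly compact by Dunford--Pettis, the argument stands as written; you should simply attribute the metrization to compactness rather than boundedness. A final point in your favour: you explicitly verify the nonnegativity of $f$ and the bound $f\in L^{\infty}(0,T;L^1_1(\mathbb{R_+}))$ (via weak closedness of the positive cone and testing against the truncations $\min(x,R)$), steps the paper's proof leaves implicit even though they are part of the statement \eqref{3_21}.
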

\begin{proof}
Let $T>0$.  Our aim is to prove 
that $\left\{f_{\varepsilon}\right\}$ is a relatively sequentially compact in $C([0,T];\omega-L^1(\mathbb{R_+}))$. We will prove it in the following two steps:\\
$(i)$ \textit{Weak compactness}: We have $f^{\text{in}}\in L^1(\mathbb{R_+})$. By de la Vall\`{e}e Poussin theorem (\hspace{-0.015cm}\cite{MR2341508}), there exists  
a function $\varphi$ satisfying  the assumptions of Lemma \ref{lem_3_2}   such that 
	$\ds	\lim_{r\to+\infty}\frac{\varphi(r)}{r}=+\infty,\quad\text{and}\quad\int_{0}^{\infty}\varphi(f^{\text{in}})(x)\dd x<+\infty.$
From Lemma \ref{lem_3_1}  and Lemma \ref{lem_3_2}, we obtain  
\begin{align}\label{3_23}
	\sup_{\substack{t \in[0,T]\\\varepsilon \in (0,1)}}
	\left\{\int_{0}^{\infty}(1+x)f_{\varepsilon}(t,x)\dd x+\int_{0}^{\infty}\varphi(f_{\varepsilon}(t,x))\dd x\right\}<+\infty.
\end{align}
By Dunford-Pettis theorem (\hspace{-0.015cm}\cite{MR2341508}),  there exists a  relatively weakly compact subset $\mathcal{E}$ of $ L^1(\mathbb{R_+})$ such that $f_{\varepsilon}\in\mathcal{E}$.\\ 
$(ii)$ \textit{Weak equicontinuity of $f_{\varepsilon}$}: Consider $\phi\in L^{\infty}(\mathbb{R_+})$. There exists a sequence of functions $\left\{\phi_k\right\}$ in $C^1_{c}(\mathbb{R_+})$ such that 
\begin{align}\label{3_24}
	\phi_k\to \phi \quad\text{a.e. in}\quad\mathbb{R_+},\quad\text{and}\quad
	||\phi_k||_{L^{\infty}}\le ||\phi||_{L^{\infty}}.
\end{align}
For setting $\sigma\in(0,1)$, the relation \eqref{3_23} ensures the existence of some real $\delta_1(\sigma)>0$ such that, for any measurable subset $X$ of $\mathbb{R_+}$ with meas$(X)\le \delta_1(\sigma)$, 
\begin{align}\label{3_25}
	\sup_{\substack{ t \in[0,T]\\\varepsilon \in (0,1)}}
	\int_{X}f_{\varepsilon}(t,x)\dd x\le  \sigma.
\end{align}
By  Egorov theorem  (\hspace{-0.015cm}\cite{MR2129625})
and estimation  \eqref{3_24}, there exists a measurable subset $X_{\sigma}$ of $[0,1/\sigma]$ such that 
\begin{align}\label{3_26}
	\text{ meas}(X_{\sigma})\le \delta_1(\sigma),\quad\text{and}\quad\lim_{k\to+\infty}\sup_{[0,1/\sigma]\setminus X_{\sigma}}|\phi_k-\phi|=0.
\end{align}
Therefore, for all $t\in(0,T)$, $h\in(-t,T-t)$ and $R\in[0,1/\sigma]$, we calculate 
\begin{align}\label{3_27}
	\bigg|\int_{0}^{\infty}\bigg(f_{\varepsilon}(t+h,x)-f_{\varepsilon}(t,x)\bigg )\phi(x) \dd x \bigg|\le& \bigg|\int_{0}^{R}\bigg(f_{\varepsilon}(t+h,x)-f_{\varepsilon}(t,x)\bigg)\;\phi_k(x)\dd x\bigg|\notag\\
	&+\bigg|\int_{0}^{R}\bigg(f_{\varepsilon}(t+h,x)-f_{\varepsilon}(t,x)\bigg)\bigg(\phi(x)-\phi_k(x)\bigg)\dd x\bigg|\notag\\
	&+\bigg|\int_{R}^{\infty}\bigg(f_{\varepsilon}(t+h,x)-f_{\varepsilon}(t,x)\bigg)\phi(x)\dd x\bigg|. 
\end{align}
Further rearrangements and estimates give
\begin{align}\label{3_28}
	\bigg|\int_{0}^{\infty}\bigg(f_{\varepsilon}(t+h,x)-f_{\varepsilon}(t,x)\bigg)\phi(x)\dd x \bigg|\notag\le& \left|\int_{t}^{t+h}\frac{\dd}{\dd s}\left(\int_{0}^{R}f_{\varepsilon}(s,x)\;\phi_k(x)\dd x\right)\dd s\right|\\\notag
	&+\bigg|\int_{[0,R]\setminus X_{\sigma}}\bigg(f_{\varepsilon}(t+h,x)-f_{\varepsilon}(t,x)\bigg)\bigg(\phi(x)-\phi_k(x)\bigg)\dd x\bigg|\\\notag
	&+\bigg|\int_{X_{\sigma}}\bigg(f_{\varepsilon}(t+h,x)-f_{\varepsilon}(t,x)\bigg)\bigg(\phi(x)-\phi_k(x)\bigg)\dd x\bigg|\\
	&+\bigg|\int_{R}^{\infty}\bigg(f_{\varepsilon}(t+h,x)-f_{\varepsilon}(t,x)\bigg)\phi(x)\dd x\bigg|.
\end{align} 
Recall the definitions of $\delta_1(\sigma)$, $X_{\sigma}$ and $\phi_k$. Therefore, by Lemma  \ref{lem_3_1} and Lemma \ref{lem_3_3} in the above inequality  yields 
\begin{align}\label{3_29}
	\bigg|\int_{0}^{\infty}\left[f_{\varepsilon}(t+h,x)-f_{\varepsilon}(t,x)\right]\phi(x)\dd x \bigg|\le |h|\mathcal{L}_R(\phi_k)+2||f^{\text{in}}||_{0,1}\sup_{[0,R]\setminus X_{\sigma}}|\phi_k-\phi|+2\bigg(2\sigma+\frac{||f^{\text{in}}||_{0,1}}{R}\bigg)||\phi||_{L^{\infty}},
\end{align}
where, $\ds \mathcal{L}_R(\phi_k)=\left[3\left(||\K||_{L^{\infty}_R}+||\C||_{L^{\infty}_R}\right)+2\sup_{y\geq R}\hat{\upsilon}_{R+1}(y)
+2\mathcal{M}\right]||f^{\text{in}}||_{0,1}^2||\phi_k||_{L^{\infty}}$ (derived in Lemma \ref{lem_3_3}).
Setting the limit $h\to0$ in \eqref{3_29}, we get
\begin{align}\label{3_30}
	\limsup_{h\to0}\sup_{\substack{t \in[0,T]\\ \varepsilon \in (0,1)}}
	\bigg|\int_{0}^{\infty}\left[f_{\varepsilon}(t+h,x)-f_{\varepsilon}(t,x)\right]\phi(x)\dd x \bigg|\le 2||f^{\text{in}}||_{0,1}\sup_{[0,R]\setminus X_{\sigma}}|\phi_k-\phi| +2\bigg(2\sigma+\frac{||f^{\text{in}}||_{0,1}}{R}\bigg)||\phi||_{L^{\infty}}.
\end{align}
Passing the  successive limits  $\sigma\to0$ and $k, R\to+\infty$ in  above inequality \eqref{3_30}, we obtain weakly equicontinuity of the family $\left\{f_{\varepsilon}:\varepsilon\in(0,1)\right\}$. Therefore, by $(i)$ and $(ii)$  $\left\{f_{\varepsilon}\right\}$ is relatively sequentially compact in $C([0,T];\omega-L^1(\mathbb{R_+}))$.
\end{proof}
It is remaining  to check that function $f$, defined by Lemma \ref{lem_3_4} is a weak solution to the  CCA equations \eqref{1_5}-\eqref{1_6}. 
Assume $\phi\in\mathcal{D}(\mathbb{R_+})$ and  $\phi_{\varepsilon}$  defined by \eqref{2_8}. For all $t\in \mathbb{R}_+$, we can easily examine  from CCA equations \eqref{1_5}-\eqref{1_6} that $f_{\varepsilon}$  holds
\begin{align}\label{3_31}
\notag\int_{0}^{\infty}\bigg(f_{\varepsilon}(t,x)-f_{\varepsilon}(0,x)\bigg)\phi_{\varepsilon}(x)\dd x=& \int_{0}^{t}\int_{0}^{\infty}\int_{0}^{r_{\varepsilon}(x)}\K_{\varepsilon}(x,y)\;f_{\varepsilon}(s,x)f_{\varepsilon}(s,y)\left[yD_{\varepsilon}(\phi_{\varepsilon})(x)-\phi_{\varepsilon}(y)\right]\dd y\dd x\dd s\\
&\hspace{0.1cm}+\int_{0}^{t}\int_{0}^{\infty}\int_{r_{\varepsilon}(x)}^{\infty}\C_{\varepsilon}(x,y)f_{\varepsilon}(s,x)f_{\varepsilon}(s,y)\left[yD_{\varepsilon}(\phi_{\varepsilon})(x)-\phi_{\varepsilon}(y)\right]\dd y\dd x\dd s.
\end{align}
For passing the  limit  $\varepsilon\to0$ in equation  \eqref{3_31}, the convergence results of $\phi_{\varepsilon}$,  $D_{\varepsilon}(\phi_{\varepsilon})$, $\K_{\varepsilon}$ and $\C_{\varepsilon}$ are required which are now established by the following lemma.
\begin{Lemma}\label{lem_3_5}
For each $R>0$ and $\varepsilon\in(0,1)$, the sequences $\left\{\phi_{\varepsilon}\right\}$, $\left\{\K_{\varepsilon}\right\}$ and $\left\{C_{\varepsilon}\right\}$   defined by \eqref{2_8} and \eqref{2_19b} respectively hold the following properties:\\
(i)	$||\phi_{\varepsilon}||_{L^{\infty}}\le ||\phi||_{L^{\infty}};\;||D_{\varepsilon}(\phi_{\varepsilon})||_{L^{\infty}}\le ||\phi||_{W^{1,\infty}};\;\phi_{\varepsilon}\to \phi \hspace{0.2cm}\text{and}\hspace{0.2cm}\;D_{\varepsilon}(\phi_{\varepsilon})\to \partial_x\phi \hspace{0.2cm}\text{strongly in}\hspace{0.2cm} L^{\infty}(\mathbb{R_+}),$\\
(ii) $||\K_{\varepsilon}||_{L^{\infty}((0,R)^2)}\le ||\K||_{L^{\infty}((0,R+1)^2)};\;||\C_{\varepsilon}||_{L^{\infty}((0,R)^2)}\le ||\C||_{L^{\infty}((0,R+1)^2)}$ and $\K_{\varepsilon}\to \K;
\;\C_{\varepsilon}\to \C $ a.e. on $\mathbb{R}^{2}_{+},$\\
(iii) $r_{\varepsilon}(x)\to x \hspace{0.2cm}\text{for a.e.} \hspace{0.2cm} x\in\mathbb{R_+}.
$
	\end{Lemma}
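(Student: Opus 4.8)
The plan is to handle the three assertions separately, each resting on the single observation that $\phi_\varepsilon$, $\K_\varepsilon$ and $\C_\varepsilon$ are assembled from cell-averages (or cell-evaluations, under the second convention in \eqref{2_14}) of $\phi$, $\K$ and $\C$ over the cells $\Lambda^\varepsilon_i$, combined with the regularity already in force: $\phi\in\mathcal{D}(\mathbb{R}_+)$ is smooth with compact support, so $\phi$ and $\phi'$ are uniformly continuous, while $\K,\C\in W^{1,\infty}_{\mathrm{loc}}$ are locally Lipschitz, hence continuous.

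For the bounds in (i) I would first note that $\phi^\varepsilon_i=\frac{1}{\varepsilon}\int_{\Lambda^\varepsilon_i}\phi\,\dd y$ is an average, so $|\phi^\varepsilon_i|\le\|\phi\|_{L^\infty}$, which gives the bound on $\phi_\varepsilon$ cell by cell. For the discrete derivative I would use the translation identity $\int_{\Lambda^\varepsilon_{i+1}}\phi(y)\,\dd y=\int_{\Lambda^\varepsilon_i}\phi(y+\varepsilon)\,\dd y$ to write $\phi^\varepsilon_{i+1}-\phi^\varepsilon_i=\frac{1}{\varepsilon}\int_{\Lambda^\varepsilon_i}[\phi(y+\varepsilon)-\phi(y)]\,\dd y$ and bound the integrand by $\varepsilon\|\phi'\|_{L^\infty}$ via the fundamental theorem of calculus, yielding $\|D_\varepsilon(\phi_\varepsilon)\|_{L^\infty}\le\|\phi'\|_{L^\infty}\le\|\phi\|_{W^{1,\infty}}$. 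For the two convergences I would invoke uniform continuity: on $\Lambda^\varepsilon_i$ the error $|\phi_\varepsilon(x)-\phi(x)|$ is controlled by the oscillation of $\phi$ over an interval of length $\varepsilon$, and rewriting $D_\varepsilon(\phi_\varepsilon)(x)=\frac{1}{\varepsilon^2}\int_{\Lambda^\varepsilon_i}\int_0^\varepsilon\phi'(y+s)\,\dd s\,\dd y$ displays it as an average of $\phi'$ over points within $O(\varepsilon)$ of $x$, so uniform continuity of $\phi$ and $\phi'$ drives both errors to zero uniformly in $x$.

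For (ii) the value of $\K_\varepsilon$ on $\Lambda^\varepsilon_i\times\Lambda^\varepsilon_j$ equals the two-dimensional average $\frac{1}{\varepsilon^2}\int_{\Lambda^\varepsilon_i\times\Lambda^\varepsilon_j}\K$, which is dominated by the supremum of $\K$ over that cell; since $\varepsilon<1$, any cell meeting $(0,R)^2$ is contained in $(0,R+1)^2$, giving the stated $L^\infty$ bound, and identically for $\C_\varepsilon$. The pointwise limit $\K_\varepsilon\to\K$ then follows from continuity of $\K$, since the cell containing a fixed $(x,y)$ shrinks to that point, so its average (or corner value $\K(\varepsilon i,\varepsilon j)$ under the second definition) tends to $\K(x,y)$; the same applies to $\C_\varepsilon$, giving the claimed a.e.\ (indeed everywhere) convergence. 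For (iii) I would simply compute that $x\in\Lambda^\varepsilon_i$ forces $x/\varepsilon+\tfrac12\in[i,i+1)$, hence $[x/\varepsilon+\tfrac12]=i$ and $r_\varepsilon(x)=(i+\tfrac12)\varepsilon$ is the right endpoint of $\Lambda^\varepsilon_i$, so $|r_\varepsilon(x)-x|\le\varepsilon\to0$ for every $x$.

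The only genuinely delicate step is the uniform convergence $D_\varepsilon(\phi_\varepsilon)\to\partial_x\phi$; the remaining claims are averaging estimates and a direct computation. The crux is to recognize the discrete difference quotient as an honest average of $\phi'$ over a rectangle of area $\varepsilon^2$ sitting within distance $O(\varepsilon)$ of $x$, after which uniform continuity of $\phi'$—guaranteed because $\mathrm{supp}\,\phi$ is compact—upgrades the convergence from pointwise to uniform in $x$.
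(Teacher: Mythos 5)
Your proposal is correct, and where the paper actually writes out a proof it coincides with yours: the paper bounds $\C_{\varepsilon}$ by the same cell-average domination $\left|\C_{\varepsilon}(x,y)\right|\le \|\C\|_{L^{\infty}((0,R+1)^2)}$, proves convergence via the identical oscillation estimate $\left|\C_{\varepsilon}(x,y)-\C(x,y)\right|\le \sup_{|u-x|\le\varepsilon,\,|v-y|\le\varepsilon}\left|\C(u,v)-\C(x,y)\right|$ (which tends to zero by the local Lipschitz continuity inherited from $\K,\C\in W^{1,\infty}_{\mathrm{loc}}$), and verifies (iii) by the same direct computation $|r_{\varepsilon}(x)-x|\le\varepsilon$. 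The genuine difference is one of scope rather than method: the paper proves \emph{only} the $\C_{\varepsilon}$ assertions and (iii), delegating all of part (i) and the $\K_{\varepsilon}$ half of part (ii) to Bagland \cite{MR2158221}, whereas you supply self-contained arguments for those too. Your key additions — the translation identity $\Lambda^{\varepsilon}_i+\varepsilon=\Lambda^{\varepsilon}_{i+1}$ giving $\phi^{\varepsilon}_{i+1}-\phi^{\varepsilon}_i=\frac{1}{\varepsilon}\int_{\Lambda^{\varepsilon}_i}[\phi(y+\varepsilon)-\phi(y)]\,\dd y$, and the representation of $D_{\varepsilon}(\phi_{\varepsilon})(x)$ as an average of $\phi'$ over a set within $O(\varepsilon)$ of $x$ — are exactly the right devices and match what the cited reference does, so your version buys a fully self-contained lemma at no real cost. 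Two small remarks: since $W^{1,\infty}_{\mathrm{loc}}$ functions are defined up to null sets, your parenthetical claim of \emph{everywhere} convergence for $\K_{\varepsilon},\C_{\varepsilon}$ should be understood as holding for the continuous representative (the a.e.\ statement of the lemma is what survives the choice of representative); and your observation that $r_{\varepsilon}(x)=(i+\tfrac12)\varepsilon$ is the right endpoint of $\Lambda^{\varepsilon}_i$ is slightly cleaner than the paper's algebraic manipulation and additionally explains why $r_{\varepsilon}$ appears as the splitting point in \eqref{2_11}.
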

	
	\begin{proof}

	The proofs of the boundedness and convergence of $\phi_{\varepsilon}$, $D_{\varepsilon}$ and $\K_{\varepsilon}$ are referred to Bagland \cite{MR2158221}.
Now, the remaining part is to prove the boundedness and convergence of $\C_{\varepsilon}$. In order to do this, let $x,y\in\mathbb{R}_+$.  
By definition of $\C_{\varepsilon}$ \eqref{2_19b},
  we obtain 
\begin{align*}
	\left|\C_{\varepsilon}(x,y)\right|= \left|\frac{1}{\varepsilon^2} \sum_{i,j=1}^{\infty}1_{\Lambda^{\varepsilon}_{i}}(x)1_{\Lambda^{\varepsilon}_{j}}(y)\int_{\Lambda^{\varepsilon}_{i}\times\Lambda^{\varepsilon}_{j}}\C(u,v)\dd v \dd u\right|
	\le ||\C||_{{L^{\infty}((0,R+1)^2)}},
\end{align*}
 which implies the boundedness of $\C_{\varepsilon}$. For $x,y\in\mathbb{R}_+$, we estimate
\begin{align*}
	\left|\left|\C_{\varepsilon}(x,y)-\C(x,y)\right|\right|&=\left|\frac{1}{\varepsilon^2} \sum_{i,j=1}^{\infty}1_{\Lambda^{\varepsilon}_{i}}(x)1_{\Lambda^{\varepsilon}_{j}}(y)
	\int_{\Lambda^{\varepsilon}_{i}\times\Lambda^{\varepsilon}_{j}}\left(\C(u,v)-\C(x,y)\right)
	\dd v\dd u \right
	|
	\le\sup\limits_{\substack{|u-x|\le \varepsilon\\ |v-y|\le \varepsilon} }\left|\C(u,v)-\C(x,y)\right|,
\end{align*}
which gives $\C_{\varepsilon}\to\C$ a.e on $\mathbb{R}^{2}_{+}$ as $\varepsilon\to0$.\\
$(iii)$	Now, $\ds |r_{\varepsilon}(x)-x|
=\bigg|\bigg[\frac{x}{\varepsilon}+\frac{1}{2}\bigg]\varepsilon+\frac{1}{2}\varepsilon-\frac{x}{\varepsilon}\varepsilon\bigg|
=\varepsilon\bigg|1-\bigg(\bigg(\frac{x}{\varepsilon}+\frac{1}{2}\bigg)-\bigg[\frac{x}{\varepsilon}+\frac{1}{2}\bigg]\bigg)\bigg|
\le \varepsilon.$
This completes the proof.
\end{proof}

\begin{Lemma}\label{lem_3_6}
For every $T\in\mathbb{R_+}$ and $R>0$, we have
$\ds	f_{\varepsilon}(t,x)f_{\varepsilon}(t,y)\rightarrow f(t,x)f(t,y)$ in $\C([0,T];\omega-L^1((0,R)^2)).$

\end{Lemma}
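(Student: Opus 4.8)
The plan is to upgrade the weak convergence $f_\varepsilon \to f$ in $C([0,T];\omega-L^1(\mathbb{R_+}))$ established in Lemma \ref{lem_3_4} to weak convergence of the \emph{tensor product} $f_\varepsilon(t,x)f_\varepsilon(t,y)$ on the bounded square $(0,R)^2$. The natural strategy is to test against a dense class of functions. First I would fix $T>0$ and $R>0$ and observe that it suffices to show, for each $t\in[0,T]$ and each test function $\psi\in L^\infty((0,R)^2)$, that
\begin{align*}
\int_0^R\int_0^R f_\varepsilon(t,x)f_\varepsilon(t,y)\psi(x,y)\,\dd x\,\dd y \longrightarrow \int_0^R\int_0^R f(t,x)f(t,y)\psi(x,y)\,\dd x\,\dd y,
\end{align*}
together with the requisite equicontinuity in $t$ that promotes pointwise-in-$t$ convergence to convergence in $C([0,T];\omega-L^1((0,R)^2))$. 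By a standard density argument, it is enough to verify the convergence on the dense subclass of product (separable) test functions $\psi(x,y)=\phi_1(x)\phi_2(y)$ with $\phi_1,\phi_2\in C_c([0,R])$, and then extend to finite linear combinations and finally to all of $L^\infty$ by the uniform $L^1$ bounds of Lemma \ref{lem_3_1}.

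For product test functions the problem decouples: I would write
\begin{align*}
\int_0^R\int_0^R f_\varepsilon(t,x)f_\varepsilon(t,y)\phi_1(x)\phi_2(y)\,\dd x\,\dd y = \left(\int_0^R f_\varepsilon(t,x)\phi_1(x)\,\dd x\right)\left(\int_0^R f_\varepsilon(t,y)\phi_2(y)\,\dd y\right),
\end{align*}
and each factor converges to its limit by Lemma \ref{lem_3_4}, since $\phi_1,\phi_2\in C_c([0,R])\subset L^\infty(\mathbb{R_+})$ are admissible weak-$L^1$ test functions. The product of two convergent sequences of real numbers converges to the product of the limits, giving the desired convergence for separable $\psi$. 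The uniform-in-$\varepsilon$ and uniform-in-$t$ bound $\int_0^\infty f_\varepsilon\,\dd x\le \|f^{\text{in}}\|_{0,1}$ from Lemma \ref{lem_3_1} controls the operator norms of the product measures $f_\varepsilon(t,\cdot)f_\varepsilon(t,\cdot)$ on $(0,R)^2$, so the passage from separable test functions to general $\psi\in L^\infty((0,R)^2)$ follows from the density of the linear span of products and a uniform bound $\big|\int f_\varepsilon f_\varepsilon\,\psi\big|\le \|f^{\text{in}}\|_{0,1}^2\|\psi\|_{L^\infty}$.

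To obtain convergence in $C([0,T];\omega-L^1((0,R)^2))$ rather than merely for each fixed $t$, I would combine the pointwise-in-$t$ weak convergence with uniform weak equicontinuity in $t$ of the family $\{f_\varepsilon f_\varepsilon\}$. For separable $\psi=\phi_1\phi_2$ this equicontinuity is inherited from the scalar equicontinuity already proved in step $(ii)$ of Lemma \ref{lem_3_4}: writing the difference of products as $a_\varepsilon(t+h)b_\varepsilon(t+h)-a_\varepsilon(t)b_\varepsilon(t)$ and splitting via $a_\varepsilon(t+h)b_\varepsilon(t+h)-a_\varepsilon(t+h)b_\varepsilon(t)+a_\varepsilon(t+h)b_\varepsilon(t)-a_\varepsilon(t)b_\varepsilon(t)$, each bracket is bounded by the uniform mass bound times the scalar modulus of continuity, which tends to zero uniformly as $h\to0$. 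The main obstacle I expect is the bookkeeping in this last extension: the equicontinuity estimate is transparent for separable test functions, but verifying that the uniform-in-$t$, uniform-in-$\varepsilon$ control survives the density argument to arbitrary $\psi\in L^\infty((0,R)^2)$ requires care, since one must approximate $\psi$ in a topology compatible with weak-$L^1$ convergence while keeping the mass bounds uniform. This is precisely where the $L^1_1$ tightness furnished by Lemma \ref{lem_3_2} (via the superlinear function $\varphi$) is invoked to rule out escape of mass and to make the diagonal $\{x=y\}$ and the far-field $\{x,y\ge R\}$ contributions negligible, exactly as in the scalar equicontinuity argument of Lemma \ref{lem_3_4}.
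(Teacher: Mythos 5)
The paper itself offers no argument here --- its ``proof'' is the single line that the claim follows from Lemma \ref{lem_3_4} --- so your proposal is in effect filling in the details that the authors (and, in the analogous OHS setting, Bagland \cite{MR2158221}) leave implicit. Your skeleton is the standard one and two of its three steps are sound: for separable tests $\psi=\phi_1\otimes\phi_2$ the double integral factorizes and each factor converges uniformly in $t$ by Lemma \ref{lem_3_4}, with uniform bounds $\bigl|\int_0^R f_\varepsilon\phi_i\,\dd x\bigr|\le \|f^{\textnormal{in}}\|_{0,1}\|\phi_i\|_{L^\infty}$ from Lemma \ref{lem_3_1}; and your product-splitting trick $a_\varepsilon b_\varepsilon(t+h)-a_\varepsilon b_\varepsilon(t)=a_\varepsilon(t+h)\left[b_\varepsilon(t+h)-b_\varepsilon(t)\right]+\left[a_\varepsilon(t+h)-a_\varepsilon(t)\right]b_\varepsilon(t)$ correctly transfers the scalar weak equicontinuity to the tensor products.

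The genuine gap is in the extension step. You claim the passage from separable tests to all $\psi\in L^\infty((0,R)^2)$ follows ``by the uniform $L^1$ bounds of Lemma \ref{lem_3_1},'' but that is false as stated: the linear span of $\{\phi_1\otimes\phi_2\}$ is dense in $C([0,R]^2)$ (Stone--Weierstrass) but \emph{not} dense in $L^\infty((0,R)^2)$ in the sup norm, and an $L^1$ bound alone does not let weak convergence jump across a class that is only uniformly dense in $C$. What is actually needed is uniform integrability of the family $\{f_\varepsilon(t,x)f_\varepsilon(t,y)\}$ on $(0,R)^2$, uniformly in $t\in[0,T]$ and $\varepsilon\in(0,1)$. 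This is available, but from Lemma \ref{lem_3_2}, not Lemma \ref{lem_3_1}: the superlinear bound $\int_0^\infty\varphi(f_\varepsilon(t,x))\,\dd x\le L(T)\Sigma_\varphi$ gives uniform integrability of $\{f_\varepsilon(t,\cdot)\}$, and the tensor product of two $L^1$-bounded, uniformly integrable families is again uniformly integrable (a short Fubini/slicing argument: split the slices $E_x$ of a small set $E$ according to whether $\textnormal{meas}(E_x)$ exceeds a threshold $\eta$). Dunford--Pettis then gives relative weak compactness in $L^1((0,R)^2)$; any weak limit point agrees with $f(t,x)f(t,y)$ against all separable tests, hence a.e.\ (continuous functions separate $L^1$ functions), and a subsequence argument yields convergence of the whole family, which your equicontinuity step upgrades to $C([0,T];\omega-L^1((0,R)^2))$. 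Your closing paragraph also misattributes the mechanism: on the bounded square $(0,R)^2$ there is no far-field and tightness is irrelevant, and the diagonal $\{x=y\}$ is a null set that plays no role for $L^1$ densities --- the only obstruction is concentration, i.e.\ precisely the equi-integrability just described.
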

\begin{proof} 
This proof follows directly from Lemma \ref{lem_3_4}. 
	\end{proof}
	\begin{Lemma}\label{lem_2_3}(\hspace{-0.015cm}\cite{MR1892231})
		Let $Y$ be an open bounded subset of $\mathbb{R}^p,\; p\geq1$ and consider two sequences $\left\{v_n\right\}$ in $L^1(Y)$ and $\left\{w_n\right\}$ in $L^{\infty}(Y)$ and a function $w\in L^{\infty}(Y)$ such that 
		$\left\{v_n\right\}\rightharpoonup v$ in $L^1(Y)$,\quad 
		$\ds||w_n||_{L^{\infty}}\le C$ and $w_n\to w$ a.e. in $Y$ 
		for some $C>0$. Then 
		\begin{align*}
				\lim_{n\to+\infty}||v_n\;(w_n-w)||_{L^1}=0,\quad\text{and}\quad v_n\;w_n\rightharpoonup vw\quad\text{in}\quad L^1(Y).
			\end{align*}
	\end{Lemma}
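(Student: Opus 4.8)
The plan is to prove the strong statement first and then read off the weak product convergence as an almost immediate consequence. The decisive structural input I would exploit is that a weakly convergent sequence in $L^1(Y)$ is relatively weakly compact, so by the Dunford--Pettis theorem the family $\{v_n\}$ is \emph{uniformly integrable} on $Y$; being weakly convergent it is also norm-bounded, say $\sup_n\|v_n\|_{L^1(Y)}=:M<+\infty$. These two properties of $\{v_n\}$, combined with the a.e.\ convergence $w_n\to w$ and the uniform bound $\|w_n\|_{L^\infty}\le C$, are exactly what the argument needs, and they dovetail with the fact that $\mathrm{meas}(Y)<+\infty$.

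For the strong statement $\|v_n(w_n-w)\|_{L^1}\to 0$, I would fix $\eta>0$ and use uniform integrability to produce $\delta>0$ with $\int_E|v_n|\,\dd x\le\eta$ for every measurable $E\subset Y$ satisfying $\mathrm{meas}(E)\le\delta$, uniformly in $n$. Since $Y$ has finite measure and $w_n\to w$ a.e., Egorov's theorem supplies a measurable $E_\delta\subset Y$ with $\mathrm{meas}(E_\delta)\le\delta$ such that $w_n\to w$ uniformly on $Y\setminus E_\delta$. Splitting $\|v_n(w_n-w)\|_{L^1}$ over $Y\setminus E_\delta$ and $E_\delta$, on the good set I bound $\int_{Y\setminus E_\delta}|v_n||w_n-w|\,\dd x\le M\sup_{Y\setminus E_\delta}|w_n-w|\to 0$, while on the bad set I use $|w_n-w|\le 2C$ together with the equiintegrability estimate to get $\int_{E_\delta}|v_n||w_n-w|\,\dd x\le 2C\eta$. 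Taking $\limsup_{n\to+\infty}$ and then $\eta\to0$ yields the claim.

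For the weak statement $v_nw_n\rightharpoonup vw$, I would simply decompose $v_nw_n=v_n(w_n-w)+v_nw$. The first summand converges to $0$ in $L^1(Y)$ by the previous step, hence in particular weakly. For the second, I test against an arbitrary $\psi\in L^\infty(Y)$: since $w\psi\in L^\infty(Y)$, the hypothesis $v_n\rightharpoonup v$ in $L^1(Y)$ gives $\int_Y v_nw\psi\,\dd x\to\int_Y vw\psi\,\dd x$, i.e.\ $v_nw\rightharpoonup vw$ in $L^1(Y)$. Adding the two contributions produces $v_nw_n\rightharpoonup vw$.

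The main obstacle is the control on the bad set $E_\delta$ in the first step: there no pointwise smallness of $w_n-w$ is available, so a.e.\ convergence alone is useless, and it is precisely the equiintegrability of $\{v_n\}$ (furnished by Dunford--Pettis from the weak $L^1$ convergence) that keeps $\int_{E_\delta}|v_n|\,\dd x$ small. Once that estimate is in place everything else is a routine splitting argument, and the weak convergence follows by duality with $L^\infty(Y)$.
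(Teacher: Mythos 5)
Your proof is correct. Note that the paper does not prove this lemma at all—it is quoted verbatim from the cited reference \cite{MR1892231}—and your argument (norm-boundedness plus uniform integrability of $\{v_n\}$ via Dunford--Pettis, Egorov's theorem on the finite-measure set $Y$, the splitting into a good set where $w_n\to w$ uniformly and a bad set where $|w_n-w|\le 2C$ is absorbed by equiintegrability, and then the decomposition $v_nw_n=v_n(w_n-w)+v_nw$ tested against $\psi\in L^\infty(Y)$) is precisely the standard proof of this classical result, so it fills the gap exactly as the cited source does.
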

	
\begin{Theorem}\label{theo_3_1}
Assume that $\K$ and $\C$ are nonnegative and symmetric satisfying growth conditions (CH1) and (CH2) and the initial condition $f^{\textnormal{in}}$ satisfies \eqref{2_1}.
Let $f_{\varepsilon}$ be the function defined by \eqref{2_19a}. Then, there exists 
a subsequence  $\left\{f_{\varepsilon_n}\right\}_{n\geq1}$ of $\left\{f_{\varepsilon}\right\}$ such that
$$f_{\varepsilon_n}\to f \in   C([0,T];\omega-L^1(\mathbb{R_+})),\hspace{0.1cm}\text{ for every }\hspace{0.2cm} T\in\mathbb{R_+},$$ where  $f$ is the weak solution to the  CCA equations  \eqref{1_5}-\eqref{1_6}.   
\end{Theorem}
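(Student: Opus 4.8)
The plan is to use the compactness already established in Lemma \ref{lem_3_4} to produce the limit $f$, and then to verify that $f$ satisfies the weak-solution identity \eqref{2_5} by passing to the limit $\varepsilon_n\to0$ in the discrete weak formulation \eqref{3_31}. First I would apply Lemma \ref{lem_3_4} on a fixed interval $[0,T]$ to extract a nonnegative $f\in L^\infty(0,T;L^1_1(\mathbb{R_+}))$ and a subsequence with $f_{\varepsilon_n}\to f$ in $C([0,T];\omega-L^1(\mathbb{R_+}))$; running this over an increasing sequence $T_k\to+\infty$ and extracting a diagonal subsequence yields a single subsequence valid for every $T\in\mathbb{R_+}$. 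The whole remaining task is then to show that this $f$ obeys \eqref{2_5} for each $\phi\in\mathcal{D}(\mathbb{R_+})$.

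For the left-hand side of \eqref{3_31}, fix $\phi\in\mathcal{D}(\mathbb{R_+})$ with $\mathrm{supp}(\phi)\subseteq[0,R_\phi]$. Since $\phi_{\varepsilon_n}\to\phi$ strongly in $L^\infty(\mathbb{R_+})$ by Lemma \ref{lem_3_5}$(i)$ and $f_{\varepsilon_n}(t)\rightharpoonup f(t)$ in $L^1(\mathbb{R_+})$ uniformly in $t$ by Lemma \ref{lem_3_4}, while the averaged initial data $f_{\varepsilon_n}(0,\cdot)$ converge strongly to $f^{\textnormal{in}}$ in $L^1$ directly from \eqref{2_13}, the weak-times-strong pairing gives convergence of the left side to $\int_0^\infty\big(f(t,x)-f^{\textnormal{in}}(x)\big)\phi(x)\,\dd x$. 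This step is routine.

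The substantive work is the right-hand side. Because $\phi$ is compactly supported, $\phi_{\varepsilon_n}$ and $D_{\varepsilon_n}(\phi_{\varepsilon_n})$ are supported in a fixed bounded interval and converge strongly in $L^\infty$ to $\phi$ and $\partial_x\phi$ (Lemma \ref{lem_3_5}$(i)$). I would split each double integral according to the two summands of the bracket $\left[yD_{\varepsilon_n}(\phi_{\varepsilon_n})(x)-\phi_{\varepsilon_n}(y)\right]$ and, within each, isolate a bounded square $(0,R)^2$ from its complement. On $(0,R)^2$ the quadratic products $f_{\varepsilon_n}(s,\cdot)f_{\varepsilon_n}(s,\cdot)$ converge weakly in $L^1$ by Lemma \ref{lem_3_6}, the kernels $\K_{\varepsilon_n}\to\K$ and $\C_{\varepsilon_n}\to\C$ a.e.\ and are uniformly bounded by Lemma \ref{lem_3_5}$(ii)$, and the test-function factors converge strongly in $L^\infty$; hence Lemma \ref{lem_2_3} lets me pass to the limit on the bounded square. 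The moving truncation is absorbed through $r_{\varepsilon_n}(x)\to x$ a.e.\ (Lemma \ref{lem_3_5}$(iii)$) and dominated convergence, so the domains $\{y<r_{\varepsilon_n}(x)\}$ and $\{y>r_{\varepsilon_n}(x)\}$ converge to $\{y<x\}$ and $\{y>x\}$, matching the integration ranges in \eqref{2_5}. Finally the $s$-integration over $[0,t]$ is handled by dominated convergence, the integrands being bounded uniformly in $\varepsilon_n$ and $s$ by the $W^{1,\infty}(0,T)$ bound of Lemma \ref{lem_3_3}.

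The main obstacle is bounding the contribution of the complement of $(0,R)^2$ uniformly in $\varepsilon_n$ and then sending $R\to+\infty$, and here the two kernels demand \emph{different} mechanisms. For the $\K$-term the only unbounded region arises from the factor $\phi_{\varepsilon_n}(y)$, where $y\le R_\phi$ is bounded but $x$ may be large; using symmetry of $\K_{\varepsilon_n}$ and the sublinear growth \textit{(CH1)} in the form \eqref{3_20}, one has $\K_{\varepsilon_n}(x,y)\le x\,\hat{\upsilon}_{R_\phi}(R)$ for $x\ge R$, so the tail is controlled by $\hat{\upsilon}_{R_\phi}(R)\big(\int_R^\infty x f_{\varepsilon_n}\,\dd x\big)\big(\int_0^{R_\phi} f_{\varepsilon_n}\,\dd y\big)$, which the uniform moment bounds of Lemma \ref{lem_3_1} make $O(\hat{\upsilon}_{R_\phi}(R))\to0$. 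For the $\C$-term the unbounded region instead comes from the factor $yD_{\varepsilon_n}(\phi_{\varepsilon_n})(x)$, where $x\le R_\phi$ is bounded but $y$ is large; here \textit{(CH2)} gives only boundedness $\C_{\varepsilon_n}\le\mathcal{M}$, so the tail is dominated by $\mathcal{M}\int_R^\infty y f_{\varepsilon_n}(s,y)\,\dd y$, which again tends to $0$ as $R\to+\infty$ uniformly in $\varepsilon_n$ by the uniform first-moment bound of Lemma \ref{lem_3_1}. Combining the bounded-region limits with these vanishing tails identifies the limit of the right-hand side of \eqref{3_31} with the right-hand side of \eqref{2_5}; hence $f$ satisfies \eqref{2_5} and is the desired weak solution, completing the proof.
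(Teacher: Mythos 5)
Your proposal follows the paper's proof almost step for step — extraction of $f$ via Lemma \ref{lem_3_4}, limit passage in \eqref{3_31} on bounded squares through Lemmas \ref{lem_3_5}, \ref{lem_3_6} and \ref{lem_2_3}, absorption of the moving truncation via $r_{\varepsilon}(x)\to x$, and the $\K$-tail controlled exactly as in the paper, by symmetry and \eqref{3_20} with a bound of order $\sup_{x\geq R}\hat{\upsilon}_{R_\phi}(x)\to 0$. The diagonal extraction over $T_k\to+\infty$ and the strong $L^1$ convergence of the projected initial data are fine (the paper only records weak convergence there, but both hold).

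There is, however, one step that fails as you justify it: the $\C$-tail. You dominate the contribution of the region $\{x\le R_\phi,\ y>R\}$ by $\mathcal{M}\int_R^\infty y\,f_{\varepsilon_n}(s,y)\,\dd y$ and claim this tends to $0$ as $R\to+\infty$ \emph{uniformly in} $\varepsilon_n$ ``by the uniform first-moment bound of Lemma \ref{lem_3_1}.'' A uniform bound on $\int_0^\infty y f_{\varepsilon_n}\,\dd y$ does not make its tails uniformly small: for $f_{\varepsilon}(x)=\varepsilon\,1_{[1/\varepsilon,\,1/\varepsilon+1]}(x)$ the first moment is of order one, yet $\int_R^\infty y f_{\varepsilon}\,\dd y$ stays of order one for every fixed $R$ once $\varepsilon<1/R$. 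Uniform smallness of first-moment tails is precisely uniform integrability of $y f_{\varepsilon_n}(t,y)\,\dd y$, i.e.\ the statement that no mass escapes to large sizes — exactly the phenomenon (mass loss/gelation) the paper itself allows in Section \ref{sec_6}, so it cannot be taken for granted. What Lemma \ref{lem_3_1} does give uniformly is the \emph{zeroth}-moment tail, $\int_R^\infty f_{\varepsilon_n}(s,x)\,\dd x\le 2\|f^{\textnormal{in}}\|_{0,1}/R$ by Chebyshev, and that is the estimate the paper actually uses for its $\C$-complement term, producing the bound $\frac{4}{R}\mathcal{M}\|f^{\textnormal{in}}\|_{0,1}^2\|\phi\|_{L^{\infty}}$. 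Note also that for the $\phi_{\varepsilon}(y)$ part of the $\C$-term the constraint $y\ge r_{\varepsilon}(x)\ge x$ forces $x\le y\le R_\phi$ (up to an $\varepsilon$-enlargement), so that part never leaves a bounded square at all; the genuinely delicate piece is the one you singled out, $y\,D_{\varepsilon_n}(\phi_{\varepsilon_n})(x)$ with $x\le R_\phi$ and $y>R$. In the paper's bookkeeping this piece is removed by carrying the truncated indicator $1_{[r_{\varepsilon}(x),R]}(y)$ through the complement computations, under which it vanishes by support; your argument, which keeps the full range $y\in[r_{\varepsilon_n}(x),\infty)$, would need an additional uniform superlinear-moment or uniform-integrability statement for $y f_{\varepsilon_n}$ (e.g.\ a de la Vall\`ee Poussin function applied to $x f^{\textnormal{in}}(x)$ together with a propagation estimate in the spirit of Lemma \ref{lem_3_2}) — neither your proposal nor Lemma \ref{lem_3_1} supplies this, so as written the step does not go through.
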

\begin{proof}
Let $T>0$. Consider $\phi\in\mathcal{D}(\mathbb{R_+})$ with supp$(\phi)\subset[0,I-2]$ for some $I>2$  
and $R>I$. Thus,  using convergence of $r_{\varepsilon}$, we get  $1_{[0,r_{\varepsilon}(x)]}\to 1_{[0,x]}$  and $1_{[r_{\varepsilon}(x), R]}\to 1_{[x, R]}$, for a.e. $x\in\mathbb{R_+}$. Therefore, Lemma \ref{lem_2_3} and Lemma \ref{lem_3_5} together with  previous estimates deduce  the following results:
\begin{align*}
\K_{\varepsilon}(x,y)\left[yD_{\varepsilon}\phi_{\varepsilon}(x)-\phi_{\varepsilon}(y)\right]1_{\left[0,r_{\varepsilon}(x)\right]}(y)  \longrightarrow\K(x,y)\left[y\partial_x\phi(x)-\phi(y)\right]1_{[0,\;x]}(y),\hspace{0.1cm}\text{and}\hspace{0.1cm}
\end{align*}
\begin{align*}
\C_{\varepsilon}(x,y)\left[yD_{\varepsilon}(\phi_{\varepsilon})(x)-\phi_{\varepsilon}(y)\right]1_{[r_{\varepsilon}(x),R]}(y) \longrightarrow\C(x,y)\left[y\partial_x\phi(x)-\phi(y)\right]1_{[x,R]}(y),
\end{align*}
a.e. in $(0,R)^2$ where $\phi_{\varepsilon}$ is  defined in \eqref{2_8}. 
Lemma \ref{lem_3_5}  recollects  bounds of $\phi_{\varepsilon}$, $D_{\varepsilon}(\phi_{\varepsilon})$,  $\K_{\varepsilon}$ and $\C_{\varepsilon}$ and then by Lemma \ref{lem_3_6},  setting  $v_{\varepsilon}(t,x,y):=f_{\varepsilon}(t,x)f_{\varepsilon}(t,y)$ and $w_{\varepsilon}(x,y):=\K_{\varepsilon}(x,y)\left[yD_{\varepsilon}(\phi_{\varepsilon})(x)-\phi_{\varepsilon}(y)\right]1_{[0,r_{\varepsilon}(x)]}(y)$, we get 
\begin{align}\label{3_32}
\int_{0}^{t}\int_{0}^{R}&\int_{0}^{R}\K_{\varepsilon}(x,y)f_{\varepsilon}(s,x)f_{\varepsilon}(s,y)\left[yD_{\varepsilon}(\phi_{\varepsilon})(x)-\phi_{\varepsilon}(y)\right]1_{[0,r_{\varepsilon}(x)]}(y)\dd y\dd x\dd s\notag\\
&\xrightarrow{\varepsilon\to 0}\int_{0}^{t}\int_{0}^{R}\int_{0}^{R}\K(x,y)f(s,x)f(s,y)\left[y\partial_x\phi(x)-\phi(y)\right]1_{[0,x]}(y)\dd y\dd x\dd s.
\end{align}
Since, supp$(\phi)\subset[0,R-2]$, therefore 
\begin{align*}
&\iint_{\mathbb{R}^{2}_{+}\setminus[0,R]^2}\K_{\varepsilon}(x,y)f_{\varepsilon}(s,x)f_{\varepsilon}(s,y)yD_{\varepsilon}(\phi_{\varepsilon})(x)1_{[0,r_{\varepsilon}(x)]}(y)\dd y\dd x=0,\quad\text{and}\\
&\iint_{\mathbb{R}^{2}_{+}\setminus[0,R]^2}\K(x,y)f(s,x)f(s,y)y\partial_x\phi(x)1_{[0,x]}(y)\dd y\dd x=0.
\end{align*}
Now, for $I<R$  we obtain
\begin{align*}
\bigg|\iint_{\mathbb{R}^{2}_{+}\setminus[0,R]^2}&	\K_{\varepsilon}(x,y)f_{\varepsilon}(s,x)f_{\varepsilon}(s,y)\phi_{\varepsilon}(y)1_{[0,r_{\varepsilon}(x)]}(y)\dd y\dd x\bigg|\\
&\le2\bigg|\int_{R}^{\infty}\int_{0}^{I}\K_{\varepsilon}(x,y)f_{\varepsilon}(s,x)f_{\varepsilon}(s,y)\phi_{\varepsilon}(y)\dd y\dd x\bigg|\\
&\le 2|\phi||_{L^{\infty}}\sup_{x\geq R}\hat{v}_I(x)\int_{0}^{\infty}f_{\varepsilon}(s,x)\dd x\int_{0}^{\infty}yf_{\varepsilon}(s,y)\dd y\\
&\le4 ||f^{\text{in}}||_{0,1}^2||\phi||_{L^{\infty}}\sup_{x\geq R}\hat{v}_I(x)
\end{align*}
and similarly, 
\begin{align*}
\bigg|\iint_{\mathbb{R}^{2}_{+}\setminus[0,R]^2}	\K(x,y)f(s,x)f(s,y)\phi(y)1_{[0,x]}(y)\dd y\dd x\bigg|
\le4 ||f^{\text{in}}||_{0,1}^2||\phi||_{L^{\infty}}\sup_{x\geq R}{v}_I(x).
\end{align*}
Hence, using growth condition both tends to $0$ as $R\to+\infty$ uniformly with respect to $\varepsilon$.
Using similar argument as in relation \eqref{3_32} for the second term in equation  \eqref{3_31}, we get
\begin{align}
\int_{0}^{t}\int_{0}^{R}\int_{0}^{R}&\C_{\varepsilon}(x,y)f_{\varepsilon}(s,x)f_{\varepsilon}(s,y)[yD_{\varepsilon}(\phi_{\varepsilon})(x)-\phi_{\varepsilon}(y)]1_{[r_{\varepsilon}(x),R]}(y)\dd y\dd x\dd s\notag\\
\xrightarrow{\varepsilon\to0}&\int_{0}^{t}\int_{0}^{R}\int_{0}^{R}\C(x,y)f(s,x)f(s,y)\left[y\partial_x\phi(x)-\phi(y)\right]1_{[x,R]}(y)\dd y\dd x\dd s.
\end{align} 
We have supp$(\phi)\subset[0,R-2]$, therefore
\begin{align*}
\iint_{\mathbb{R}^{2}_{+}\setminus[0,R]^2}\C_{\varepsilon}(x,y)f_{\varepsilon}(s,x)f_{\varepsilon}(s,y)yD_{\varepsilon}(\phi_{\varepsilon})(x)1_{[r_{\varepsilon}(x),R]}(y)\dd y\dd x=0,\\
\iint_{\mathbb{R}^{2}_{+}\setminus[0,R]^2}\C(x,y)f(s,x)f(s,y)y\partial_x\phi(x)1_{[x,R]}(y)\dd y\dd x=0,
\end{align*}
Using supp$(\phi)\subset[0,R-2]$ and hypothesis \textit{(CH2)}, we calculate  the following term:
\begin{align*}
&\left|\iint_{\mathbb{R}^{2}_{+}\setminus[0,R]^2}	\C_{\varepsilon}(x,y)f_{\varepsilon}(s,x)f_{\varepsilon}(s,y)\phi_{\varepsilon}(y)1_{[r_{\varepsilon}(x),R]}(y)\dd y\dd x\right|\\
&\qquad\leq2\bigg|\int_{R}^{\infty}\int_{0}^{R}\C_{\varepsilon}(x,y)f_{\varepsilon}(s,x)f_{\varepsilon}(s,y)\phi_{\varepsilon}(y)\dd y\dd x\bigg|
\\&\qquad\leq2||\phi||_{L^{\infty}}\int_{R}^{\infty}\int_{0}^{R}\C_{\varepsilon}(x,y)f_{\varepsilon}(s,x)f_{\varepsilon}(s,y)\dd y\dd x
\\&\qquad\leq2\|\phi\|_{L^\infty}\,\mathcal M\int_{R}^{\infty}\int_{0}^{R} f_{\varepsilon}(s,x)\, f_{\varepsilon}(s,y)\,\dd y\dd x
\\
&\qquad= 2\|\phi\|_{L^\infty}\,\mathcal M\Big(\int_{R}^{\infty} f_{\varepsilon}(s,x)\,\dd x\Big)\Big(\int_{0}^{R} f_{\varepsilon}(s,y)\,\dd y\Big)\\
&\qquad\leq 4\|\phi\|_{L^\infty}\,\mathcal M\cdot\frac{1}{R}||f^{\text{in}}||_{0,1}^2
\leq \frac{4}{R}\mathcal{M}||f^{\text{in}}||_{0,1}^2||\phi||_{L^{\infty}}.
\end{align*}

Therefore, it implies that above left side integral term is bounded. Similarly, we obtain the bound for the following term:
\begin{align*}
\left|\iint_{\mathbb{R}^{2}_{+}\setminus[0,R]^2}	\C(x,y)f(s,x)f(s,y)\phi(y)1_{[x,R]}(y)\dd y\dd x\right|\leq\frac{4}{R}\mathcal{M}||f^{\text{in}}||_{0,1}^2||\phi||_{L^{\infty}}.
\end{align*}
Hence, both tends to $0$ as $R\to+\infty$ uniformly with respect to $\varepsilon$.
Thus, combining above all results and by passing successive limits as $\varepsilon\to0$ and $R\to+\infty$, we obtain
\begin{align*}
&\int_{0}^{t}\int_{0}^{\infty}\int_{0}^{\infty}\K_{\varepsilon}(x,y)\;f_{\varepsilon}(s,x)f_{\varepsilon}(s,y)\left[yD_{\varepsilon}(\phi_{\varepsilon})(x)-\phi_{\varepsilon}(y)\right]1_{[0,r_{\varepsilon}(x)]}(y)\dd y\dd x\dd s\notag\\
&+\int_{0}^{t}\int_{0}^{\infty}\int_{0}^{\infty}\C_{\varepsilon}(x,y)\;f_{\varepsilon}(s,x)f_{\varepsilon}(s,y)\left[yD_{\varepsilon}(\phi_{\varepsilon})(x)-\phi_{\varepsilon}(y)\right]1_{[r_{\varepsilon}(x),+\infty]}(y)\dd y\dd x\dd s\notag\\
&\hspace{1.2cm}\longrightarrow\int_{0}^{t}\int_{0}^{\infty}\int_{0}^{\infty}\K(x,y)f(s,x)f(s,y)\left[y\partial_x\phi(x)-\phi(y)\right]1_{[0,x]}(y)\dd y\dd x\dd s\\\notag
&\hspace{2.2cm}+\int_{0}^{t}\int_{0}^{\infty}\int_{0}^{\infty}\C(x,y)f(s,x)f(s,y)\left[y\partial_x\phi(x)-\phi(y)\right]1_{[x,+\infty]}(y)\dd y\dd x\dd s,
\end{align*}
that is, 
\begin{align*}
&\notag\int_{0}^{t}\int_{0}^{\infty}\int_{0}^{r_{\varepsilon}(x)}\K_{\varepsilon}(x,y)\;f_{\varepsilon}(s,x)f_{\varepsilon}(s,y)\;[y\;D_{\varepsilon}(\phi_{\varepsilon})(x)-\phi_{\varepsilon}(y)]\;\dd y\dd x\dd s\\\notag
&+\int_{0}^{t}\int_{0}^{\infty}\int_{r_{\varepsilon}(x)}^{\infty}\C_{\varepsilon}(x,y)f_{\varepsilon}(s,x)f_{\varepsilon}(s,y)\left[yD_{\varepsilon}(\phi_{\varepsilon})(x)-\phi_{\varepsilon}(y)\right]\dd y\dd x\dd s\\\notag
&\hspace{1.2cm}\xrightarrow{\varepsilon\to0}\int_{0}^{t}\int_{0}^{\infty}\int_{0}^{x}\K(x,y)f(s,x)f(s,y)[y\partial_x\phi(x)-\phi(y)]\dd y\dd x\dd s\\
&\hspace{2cm}+\int_{0}^{t}\int_{0}^{\infty}\int_{x}^{\infty}\C(x,y)f(s,x)f(s,y)[y\partial_x\phi(x)-\phi(y)]\dd y\dd x\dd s.
\end{align*}
By Lemma \ref{lem_3_1}, Lemma \ref{lem_3_4} and Lemma \ref{lem_3_5}, we can easily see that for any $t>0$
\begin{align*}
\int_{0}^{\infty}f_{\varepsilon}(t,x)\phi_{\varepsilon}(x)\dd x\longrightarrow\int_{0}^{\infty}f(t,x)\phi(x)\dd x.
\end{align*}
Also, 
$\ds f_{\varepsilon}(0,x)\rightharpoonup f^{\text{in}}(x)$ in $L^1(\mathbb{R_+}).$
Lemma \ref{lem_3_5} along with previous convergence  result 
gives that
\begin{align}
\int_{0}^{\infty}f_{\varepsilon}(0,x)\phi_{\varepsilon}(x)\dd x\longrightarrow\int_{0}^{\infty}f^{\text{in}}(x)\phi(x)\dd x.
\end{align}
Gathering  all above results, we get that $f$ satisfies \eqref{2_5} and hence, it is a weak solution to the CCA equations \eqref{1_5}-\eqref{1_6}. 
\end{proof}


\section{Existence of solution in discrete regime}\label{sec_5}

Consider a weighted Banach space of real sequences $c=\left\{c_i\right\}_{i\geq1}$, defined by for all $r\geq0$,
\begin{align*}
	Y_r:=\left\{c=\left\{c_i\right\}_{i\geq1}:\sum_{i=1}^{\infty}i^r|c_i|<+\infty\right\},\hspace{0.17cm}\text{with norm}\hspace{0.17cm} ||c||_{Y_r}=\sum_{i=1}^{\infty}i^r|c_i|.
\end{align*}

Let $\ds Y_{r}^+:=\left\{c\in Y_r: c=\left\{c_i\right\}_{i\geq1}\geq0\hspace{0.17cm} \text{a.e.}\right\}$ denote a positive cone of  space $Y_{r}$ \cite{dubovskiui1994mathematical}.
Throughout this context we assume the following hypotheses:\\
\textit{Hypotheses: we consider the following hypotheses which are the discrete form of  hypotheses \textit{(CH1)} and \textit{(CH2)}: for all $i,j\geq1$,\\
$\it{(DH1a)}:$ $\K_{i,j}$ satisfies the growth condition, $\ds 
\lim_{j\to+\infty}\frac{\K_{i,j}}{j}=0, \quad \text{and}$\\
$\it{(DH2b)}:$  For some integer $m\geq1$,
$\C_{i,j}$ satisfies\; 
$\ds\sup_{j\geq m}\C_{i,j}\le \mathcal{M},\hspace{0.2cm}\text{where constant }\hspace{0.2cm}\mathcal{M}\geq1.$}

\begin{defn}[Weak solution]\label{def_4_1}
\textit{Let $T>0$. 
Assume that $\K_{i,j}$ and $\C_{i,j}$ are nonnegative and symmetric satisfying (DH1a) and (DH2b). Also, assume that $c^{\textnormal{in}}=\left\{c^{\textnormal{in}}_i\right\}_{i\geq1}$ be a sequence of nonnegative real numbers satisfying $c^{\textnormal{in}}\in Y_1^{+}$. A solution $c=\left\{c_i\right\}_{i\geq1}$ to the DCA equations \eqref{1_8}-\eqref{1_9} on $[0,T)$ is a sequence of nonnegative continuous functions such that, for all $i\geq1$ and each $t\in(0,T)$,
\begin{align*}
&(i)\quad c_i\in C([0,T)), \quad\sum_{j=i}^{\infty}\K_{i,j}c_ic_j\in L^1(0,t),\quad\text{and}\quad\sum_{j=i}^{\infty}j\C_{i,j}c_ic_j\in L^1(0,t),\\
& (ii)\quad c_i(t)=c^{\textnormal{in}}_i+\int_{0}^{t}\bigg[c_{i-1}(s)\sum_{j=1}^{i-1}j \K_{i-1,j}  c_{j}(s) - c_i(s) \sum_{j=1}^{i} j\K_{i,j} c_j(s)-\sum_{j=i}^{\infty} \K_{i,j} c_{i}(s)c_{j}(s)\\
&\hspace{3.67cm}+c_{i-1}(s) \sum_{j=i-1}^{\infty} j\C_{i-1,j}  c_{j}(s) - c_i(s) \sum_{j=i}^{\infty} j\C_{i,j} c_j(s)-\sum_{j=1}^{i}\C_{i,j} c_i(s)c_j(s)\bigg]\dd s.
\end{align*}}
\end{defn} 
The solution $c$ is global if $T=+\infty$.\\
To prove the global existence theorem, we truncate   the system \eqref{1_8}-\eqref{1_9}.
Let $N\geq3$ be  an integer.
Then, for $1\le i\le N $ and each $t\geq0$, we consider the following system of $N$ ordinary differential equations:
\begin{align}\label{4_2}
\frac{\dd c^N_i(t)}{\dd t} = Q^N_i(c^N(t)),\quad \text{with the initial data,}\quad c^N_i(0)=c^{\text{in},N}_i,
\end{align} 
where $c^N=\left\{c^N_i\right\}_{1\le i\le N}$, 
\begin{align}\label{4_3}
Q^N_i(c^N(t)):=&\notag c^N_{i-1}(t) \underbrace{\sum_{j=1}^{i-1}j \K_{i-1,j}  c^N_{j}(t)}_{\text{F}^1_i(t)} - c^N_i(t)\underbrace{\bigg[ \sum_{j=1}^{i} j\K_{i,j}c^N_j(t)+\sum_{j=i}^{N} \K_{i,j}c^N_j(t)\bigg]}_{\text{E}^1_i(t)}\\
&+c^N_{i-1}(t) \underbrace{\sum_{j=i-1}^{N} j\C_{i-1,j}c^N_{j}(t)}_{\text{F}^2_i(t)} - c^N_i(t)\underbrace{\bigg[ \sum_{j=i}^{N} j\C_{i,j} c^N_j(t)+\sum_{j=1}^{i} \C_{i,j}c^N_j(t)\bigg]}_{\text{E}^1_i(t)}.
\end{align}

Therefore,
\begin{align}\label{4_4}
\frac{\dd c^N_i(t)}{\dd t}+c^N_i(t)\bigg[E^1_i(t)+E^2_i(t)\bigg]=c^N_{i-1}(t)\bigg[F^1_i(t)+F^2_i(t)\bigg]. 
\end{align} 
\begin{Lemma}\label{lem_4_1}
Assume that kernels  $\K_{i,j}$ and $\C_{i,j}$ satisfy hypotheses \textit{(DH1a)} and \textit{(DH2b)}. Also, $c^{{\textnormal{in},N}}\in Y_1^{+}$. For every $N\geq3$, there exists a unique nonnegative solution $c^N=\left\{c^N_i\right\}_{1\le i\le N}$ in $C^1([0,+\infty),\mathbb{R}^N)$ to the system \eqref{4_2}-\eqref{4_3}. Furthermore, 
\begin{align}\label{4_5}
\sum_{i=1}^{N}ic^N_i(t)= \sum_{i=1}^{N}ic^{\textnormal{in},N}_i,\quad\text{for all}\hspace{0.2cm} t\in[0,+\infty).
\end{align}
\end{Lemma}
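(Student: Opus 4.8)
The plan is to treat \eqref{4_2}--\eqref{4_3} as a finite system of ordinary differential equations in $\mathbb{R}^N$ and to proceed in four steps: local well-posedness, nonnegativity, the mass identity \eqref{4_5}, and global continuation. First I would note that the vector field $Q^N=(Q^N_i)_{1\le i\le N}\colon\mathbb{R}^N\to\mathbb{R}^N$ has components that are quadratic polynomials in the coordinates $c^N_1,\dots,c^N_N$ (every inner sum is finite thanks to the truncation at $N$). Hence $Q^N$ is $C^\infty$, in particular locally Lipschitz, and the Cauchy--Lipschitz (Picard--Lindel\"{o}f) theorem yields a unique maximal solution $c^N\in C^1([0,T_{\max}),\mathbb{R}^N)$ with $c^N(0)=c^{\mathrm{in},N}$. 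It then remains to establish nonnegativity and to rule out finite-time blow-up.

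For nonnegativity I would exploit the form \eqref{4_4}. Multiplying by the integrating factor $\exp\!\big(\int_0^t (E^1_i+E^2_i)\,\mathrm{d}s\big)$ rewrites \eqref{4_4} as $\frac{\mathrm{d}}{\mathrm{d}t}\big(c^N_i\,e^{\int_0^t(E^1_i+E^2_i)}\big)=c^N_{i-1}(F^1_i+F^2_i)\,e^{\int_0^t(E^1_i+E^2_i)}$, and I would argue by induction on $i$, starting from the convention $c^N_0\equiv 0$. Since the kernels $\K_{i,j},\C_{i,j}$ are nonnegative, once $c^N_1,\dots,c^N_{i-1}\ge 0$ the coefficients $F^1_i,F^2_i$ are nonnegative, so the source term is nonnegative and the integrated identity gives $c^N_i(t)\ge c^{\mathrm{in},N}_i\,e^{-\int_0^t(\cdots)}\ge 0$. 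Equivalently, one verifies the quasi-positivity condition that $Q^N_i\ge 0$ whenever $c^N_i=0$ and $c^N_j\ge 0$ for $j\ne i$, which makes the cone $\mathbb{R}^N_+$ positively invariant along the flow.

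The core of the argument is the mass identity \eqref{4_5}. I would compute $\frac{\mathrm{d}}{\mathrm{d}t}\sum_{i=1}^N i\,c^N_i=\sum_{i=1}^N i\,Q^N_i$ and reorganize the two growth (flux) contributions $\sum_i i\,c^N_{i-1}(F^1_i+F^2_i)$ against the matching loss terms through the shift $i\mapsto i+1$, so that the growth terms telescope; simultaneously I would use the symmetry $\K_{i,j}=\K_{j,i}$ and $\C_{i,j}=\C_{j,i}$ to pair each depletion sum (the $W$- and $Z$-type terms $\sum_j\K_{i,j}c^N_ic^N_j$ and $\sum_j\C_{i,j}c^N_ic^N_j$) with its corresponding gain term after the relabelling $(i,j)\mapsto(j,i)$. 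The expected outcome is that all interior contributions cancel and only terms involving the top level $i=N$ remain. \textbf{I expect this boundary bookkeeping at $i=N$ to be the main obstacle}: one must check that the growth-outflow and depletion terms carrying $c^N_N$ cancel exactly rather than leaving a spurious sink, and keeping the index ranges of the truncated sums aligned through the shift is precisely where an error would creep in. Once $\frac{\mathrm{d}}{\mathrm{d}t}\sum_{i=1}^N i\,c^N_i=0$ is secured, \eqref{4_5} follows by integrating in time.

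Finally, global existence follows from the a priori control just obtained. Combining nonnegativity with \eqref{4_5} yields $i\,c^N_i(t)\le\sum_{k=1}^N k\,c^N_k(t)=\sum_{k=1}^N k\,c^{\mathrm{in},N}_k<+\infty$ for every $i$ and every $t$, so $c^N(t)$ stays in a fixed bounded subset of $\mathbb{R}^N_+$ throughout $[0,T_{\max})$. A maximal solution that remains bounded cannot escape in finite time, so the standard continuation criterion forces $T_{\max}=+\infty$, completing the proof. I note that for this last step a one-sided bound of the form $\sum_i i\,c^N_i(t)\le\sum_i i\,c^{\mathrm{in},N}_i$ already suffices, so even a mass-dissipative version of the estimate would close the continuation argument; the sharper equality in \eqref{4_5} is what genuinely requires the delicate cancellation above.
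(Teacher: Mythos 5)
Your four steps coincide with the paper's own proof of this lemma: the paper likewise invokes the Cauchy--Lipschitz theorem for the truncated quadratic field $Q^N$, proves nonnegativity through exactly your integrating-factor identity (its formula \eqref{4_6}) with an induction on $i$ starting from the convention $c^N_0\equiv 0$, obtains \eqref{4_5} by the weighted-moment computation with the symmetry swap $(i,j)\mapsto(j,i)$, and concludes $t^{+}(c^{\mathrm{in},N})=+\infty$ from boundedness of the solution. So in outline you have reproduced the paper's argument.

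The one point where you and the paper diverge is the boundary bookkeeping you flagged, and there your suspicion is vindicated: for the system exactly as written in \eqref{4_2}--\eqref{4_3} the cancellation at $i=N$ is \emph{not} exact. The gain flux stops at level $N$, while the loss fluxes $-c^N_N\sum_{j=1}^{N}j\K_{N,j}c^N_j$ and $-c^N_N\,N\,\C_{N,N}c^N_N$ are retained in $E^1_N$, $E^2_N$, and the depletion sums, after the symmetry swap, pair against the (absent) gain at level $N+1$. Carrying out your telescoping honestly yields
\begin{align*}
\frac{\dd}{\dd t}\sum_{i=1}^{N}i\,c^N_i(t)\;=\;-(N+1)\,c^N_N(t)\left(\sum_{j=1}^{N}j\,\K_{N,j}\,c^N_j(t)+N\,\C_{N,N}\,c^N_N(t)\right)\;\le\;0,
\end{align*}
so \eqref{4_5} holds as an equality only as long as $c^N_N\equiv 0$; in general one only obtains the one-sided bound $\sum_{i=1}^{N}ic^N_i(t)\le\sum_{i=1}^{N}ic^{\mathrm{in},N}_i$. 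The paper's computation telescopes as in the untruncated system (note the factor $(i+1-i)$ in its display, which presumes a matching gain above level $N$) and thereby silently discards this boundary term. Your closing remark is the correct repair: the one-sided bound already closes the continuation argument, and it is also all that is actually used downstream (in \eqref{2_18}, Lemma \ref{lem_4_2} and Proposition \ref{prop_4_1}), so the lemma survives with \eqref{4_5} weakened to an inequality; alternatively, exact conservation can be restored by modifying the truncation at the top level, making $N$-clusters inert as growers, i.e.\ deleting the outflow flux at $i=N$ together with the matching depletion contributions whose growing partner sits at level $N$.
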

\begin{proof}
Assume that $c^{\text{in},N}=\left\{c^{\text{in},N}_i\right\}\in \mathbb{R}^N$. Notice that $Q^N$ is a locally Lipschitz continuous function. Therefore, the Cauchy-Lipschitz theorem guarantees the existence of a unique maximal solution  $c^N=\left\{c^N_i\right\}_{1\le i\le N}$ in $C^1([0, t^+(c^{\text{in},N});\mathbb{R}^N)$ to the system \eqref{4_2}-\eqref{4_3}, where either $t^+(c^{\text{in},N})=+\infty$ or,   $t^+(c^{\text{in},N})<+\infty$.

\noindent
\emph{Nonnegativity:}
For all $i\geq1$ and  each $t\in[0,+\infty)$ the solution $c_i^N$ of equation \eqref{4_4} holds  
\begin{align}\label{4_6}
c^N_i(t)=&\notag c^{\text{in},N}_i\exp\bigg(-\int_{0}^{t}\bigg[E^1_i(s)+E^2_i(s)\bigg]\dd s\bigg)\\
&+\int_{0}^{t}\exp\bigg(-\int_{s}^{t}\bigg[E^1_i(s_1)+E^2_i(s_1)\bigg]\dd s_1\bigg)c^N_{i-1}(s)\bigg[F^1_i(s)+F^2_i(s)\bigg]\dd s.
\end{align}
In equation \eqref{4_6}, all terms are nonnegative except the term
\begin{align}\label{4_7}
c^N_{i-1}(s)\bigg[F^1_i(s)+F^2_i(s)\bigg]=c^N_{i-1}(s)\bigg[\sum_{j=1}^{i-1} j\K^N_{i-1,j} c^N_{j}(s)+\sum_{j=i-1}^{N} j\C^N_{i-1,j}  c^N_{j}(s)\bigg].
\end{align}
Therefore, we have to check whether the term \eqref{4_7} is nonnegative or not.
Here, we propose the induction hypothesis for this.\\
$(i)$ For $i=1$, term \eqref{4_7} is zero which implies $c^N_1(t)\geq0$.\\
$(ii)$ For $i,k>1$ and $i\le k$,
it is true such that term \eqref{4_7} 
is nonnegative i.e. $c^N_i(t)\geq0$ for $i\le k$.\\
$(iii)$ For $i=k+1$, term \eqref{4_7} becomes
$$c^N_{k}(s)[F^1_{k+1}(s)+F^2_{k+1}(s)]=c^N_{k}(s)\bigg[\sum_{j=1}^{k} j\K^N_{k,j}  c^N_{j}(s)+\sum_{j=k}^{N} j\C^N_{k,j}  c^N_{j}(s)\bigg].$$
The above term is nonnegative due to $\K^N_{i,j}$ and $\C^N_{i,j}$ are nonnegative and  property $(ii)$.
Therefore, by induction hypothesis we obtain the nonnegativity of the solution.\\
For $1\le i,j\le N$ and each $c^N\in\mathbb{R^N}$,  the equation \eqref{1_10} gives
\begin{align*}
\sum_{i=1}^{N}iQ^N_i(c^N(t))=&\sum_{i=1}^{N}\sum_{j=1}^{i}j\K_{i,j} c^N_i(t) c^N_j(t)({i+1}-i) - \sum_{i=1}^{N}\sum_{j=i}^{N} i\K_{i,j}c^N_i(t)c^N_j(t)\\
&+\sum_{i=1}^{N}\sum_{j=i}^{N}j\C_{i,j}c^N_i(t) c^N_{j}(t)(i+1-i) -\sum_{i=1}^{N}\sum_{j=1}^{i} i\C_{i,j}c^N_i(t) c^N_{j}(t)=0
\end{align*}
Thereby,
\begin{align}
\frac{\dd}{\dd t}\sum_{i=1}^{N}ic^N_i(t)= 0, \hspace{0.4cm}\text{which implies}\hspace{0.4cm}
\sum_{i=1}^{N}ic^N_i(t)=\sum_{i=1}^{N}ic^{\text{in},N}_i.
\end{align}
Thus, the conservation of mass  for truncated problem is followed.
Moreover, due to aggregation, the total number of particles diminishes over time which deduces that for each $t\in[0,t^+(c^{\text{in},N}))$,
\begin{align*}
0\le c^N_i(t)\le \sum_{i=1}^{N}c^N_i(t)\le \sum_{i=1}^{N}c^{\text{in},N}_i\leq\sum_{i=1}^{N}ic_i^{\text{in},N}\leq \sum_{i=1}^{\infty}ic_i^{\text{in}}<+\infty .
\end{align*}
So,	 the above result implies that the solution $c^N$ to the system \eqref{4_2}-\eqref{4_3} does not blow up for   $t\in[0,\;t^+(c^{\text{in},N}))$ and hence,  $t^+(c^{\text{in},N})=+\infty$ for any $c^{\text{in},N}\in[0,+\infty)^N$.
\end{proof}
\begin{Lemma}\label{lem_4_2}
Assume that hypotheses  \textit{(DH1a)} and \textit{(DH2b)} 
hold and $c^N$ be a solution of the system \eqref{4_2}-\eqref{4_3} with the initial data $c^{{\textnormal{in},N}}\in Y_1^{+}$, 
for all $i\geq1$. Then, for each fixed $i\in\mathbb{N}$,  there exists  a constant $\Theta_i$ (depending on $i$ and initial mass 
 only) such that, for every $N\geq i$ and $t\in[0,+\infty)$,
\begin{align}\label{4_9}
\bigg|\frac{\dd c^N_i(t)}{\dd t}\bigg|\le \Theta_i.
\end{align}
\end{Lemma}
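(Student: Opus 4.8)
The plan is to estimate the six summands of $Q^N_i(c^N(t))$ appearing in \eqref{4_3} one at a time, using the first-moment control furnished by Lemma \ref{lem_4_1} together with the growth hypotheses \textit{(DH1a)} and \textit{(DH2b)}. Write $\varrho:=\sum_{i=1}^{\infty}ic^{\textnormal{in}}_i<+\infty$ for the initial mass. By the conservation identity \eqref{4_5} and the nonnegativity of the solution (both established in Lemma \ref{lem_4_1}), one has $\sum_{j=1}^{N}jc^N_j(t)=\sum_{j=1}^{N}jc^{\textnormal{in},N}_j\le\varrho$ for every $N\ge i$ and $t\ge0$, and in particular $c^N_j(t)\le\varrho$ for all $j,t,N$. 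These two facts — a pointwise bound on each $c^N_j$ and a uniform bound on the first moment — are the only information about the solution that I will use, and they are precisely what renders the final constant independent of $N$ and $t$.

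First I would dispose of the three summands carrying finite inner sums, namely $c^N_{i-1}\sum_{j=1}^{i-1}j\K_{i-1,j}c^N_j$, $c^N_i\sum_{j=1}^{i}j\K_{i,j}c^N_j$ and $c^N_i\sum_{j=1}^{i}\C_{i,j}c^N_j$. In each case the inner sum ranges over $j\le i$, so replacing $\K_{i,j}$ (respectively $\C_{i,j}$) by its maximum over the finite index set $\{(i,j):j\le i\}$ and invoking $\sum_{j=1}^{i}jc^N_j\le\varrho$ together with $c^N_{i-1},c^N_i\le\varrho$ bounds each summand by a constant of the form (finite kernel maximum)$\,\times\,\varrho^2$, depending only on $i$ and $\varrho$. (For $i=1$ the factor $c^N_0$ vanishes, so those summands are simply zero.)

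The substantive part is the three tail summands $c^N_i\sum_{j=i}^{N}\K_{i,j}c^N_j$, $c^N_{i-1}\sum_{j=i-1}^{N}j\C_{i-1,j}c^N_j$ and $c^N_i\sum_{j=i}^{N}j\C_{i,j}c^N_j$, whose inner sums extend to $N$; this is where the growth hypotheses enter and where the main obstacle lies, since a priori the kernels may grow in $j$ and the bound must hold uniformly in $N$. For the first tail summand, hypothesis \textit{(DH1a)} (that is, $\K_{i,j}/j\to0$ as $j\to+\infty$) makes $A_i:=\sup_{j\ge i}\K_{i,j}/j$ finite, whence $\K_{i,j}\le A_i j$ and $\sum_{j=i}^{N}\K_{i,j}c^N_j\le A_i\sum_{j=i}^{N}jc^N_j\le A_i\varrho$. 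For the two $\C$-tail summands, hypothesis \textit{(DH2b)} gives $\C_{i,j}\le\mathcal{M}$ for $j\ge m$; combining this with the maximum of $\C_{i,j}$ over the finitely many indices $j<m$ yields a single bound $\C_{i,j}\le B_i$ valid for all $j$, so that $\sum_{j}j\C_{i,j}c^N_j\le B_i\sum_j jc^N_j\le B_i\varrho$. Multiplying each tail sum by the pointwise bound $c^N_{i-1},c^N_i\le\varrho$ again produces constants depending only on $i$ and $\varrho$.

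Finally I would take $\Theta_i$ to be the sum of the six constants obtained above; since each is built from $\varrho$, the fixed index $i$, and the kernel quantities $A_i$, $B_i$ and the relevant finite maxima, the resulting $\Theta_i$ is manifestly independent of $N$ and of $t$, which is exactly \eqref{4_9}. The only delicate point is verifying $A_i=\sup_{j\ge i}\K_{i,j}/j<+\infty$, but this is automatic: a sequence converging to $0$ is bounded, so \textit{(DH1a)} supplies the finiteness directly.
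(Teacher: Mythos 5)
Your proof is correct and takes essentially the same approach as the paper's: both bound the six summands of $Q^N_i$ in \eqref{4_3} term by term, using the finiteness of $\sup_{j}\K_{i,j}/j$ (from \textit{(DH1a)}) and of $\sup_{j}\C_{i,j}$ (from \textit{(DH2b)} plus a maximum over the finitely many $j<m$), together with nonnegativity and mass conservation from Lemma \ref{lem_4_1}, arriving at a constant of the form $\big(\text{kernel suprema}\big)\times\big(\sum_{j}jc^{\textnormal{in}}_j\big)^2$. The only cosmetic difference is that the paper applies the linear bound $\K_{i,j}\le\mu_{i}j$ (using $j^2\le ij$) to the finite sums as well, whereas you bound those by plain finite maxima of the kernels.
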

\begin{proof}
For every $i\geq1$ and using the hypotheses, we set
\begin{align}\label{4_10}
\mu_i:=\sup_{j}\frac{\K_{i,j}}{j}<+\infty,\quad\text{and}\quad\xi_i:=\sup_{j}\C_{i,j}<+\infty.
\end{align}
Therefore,   we  have 
\begin{align}\label{4_11}
\bigg|\frac{\dd c^N_i(t)}{\dd t}\bigg|\le&\bigg| c^N_{i-1}(t) \sum_{j=1}^{i-1}j \K_{i-1,j}  c^N_{j}(t)\bigg| +\bigg| c^N_i(t) \sum_{j=1}^{i} j\K_{i,j}c^N_j(t)\bigg|+\bigg|\sum_{j=i}^{N} \K_{i,j} c^N_i(t)c^N_j(t)\bigg|\notag\\
&+\bigg|c^N_{i-1}(t) \sum_{j=i-1}^{N} j\C_{i-1,j} c^N_{j}(t)\bigg| + \bigg|c^N_i(t) \sum_{j=i}^{N} j\C_{i,j}  c^N_j(t)\bigg|+\bigg|\sum_{j=1}^{i} \C_{i,j} c^N_i(t)c^N_j(t)\bigg|.
\end{align}
For $1\le j\le i-1$, we have $\ds j^2\leq j(i-1)$, whereas  for $1\le j\le i$,\; $\ds j^2\leq ij$. 
Hence,
\begin{align}\label{4_12}
\bigg|\frac{\dd c^N_i(t)}{\dd t}\bigg|\le&\notag (i-1) c^N_{i-1}(t) \mu_{i-1} \sum_{j=1}^{i-1}j\;c^N_{j}(t) +i c^N_i(t) \mu_i \sum_{j=1}^{i} jc^N_j(t)+\mu_i\sum_{j=i}^{N}j c^N_i(t)c^N_j(t)\\
&+\xi_{i-1}c^N_{i-1}(t) \sum_{j=i-1}^{N} jc^N_{j}(t) +\xi_ic^N_i(t)\sum_{j=i}^{N}j c^N_j(t)+\sum_{j=1}^{i}\C_{i,j} c^N_i(t)c^N_j(t).
\end{align}
Due to aggregation, we have $\ds \sum_{j=1}^{N}c^N_{j}(t)\leq\sum_{j=1}^{N}c^{\text{in},N}_{j}$.  
Also, each $c^N_i\leq\ds  \sum_{i=1}^{N}ic_i^{\text{in,N}}$. Using all previous estimates, the inequality \eqref{4_12} can be simplified as
\begin{align*}
\bigg|\frac{\dd c^N_i(t)}{\dd t}\bigg|\le&\notag \mu_{i-1}(i-1)c^N_{i-1}(t)\sum_{j=1}^{N}jc^N_{j}(t) + \mu_{i}i c^N_i(t) \sum_{j=1}^{N} jc^N_j(t)+\mu_ic^N_i(t)\sum_{j=1}^{N}jc^N_j(t)
\\&+\xi_{i-1}c^N_{i-1}(t) \sum_{j=1}^{N}j c^N_{j}(t) +\xi_{i}c^N_{i}(t) \sum_{j=1}^{N}j c^N_{j}(t)+\xi_i c^N_i(t)\sum_{j=1}^{i} c^N_j(t)\\
\le&\bigg(\mu_{i-1}+\xi_{i-1}+2(\mu_i+\xi_i)\bigg)\bigg(\sum_{j=1}^{\infty}jc^{\text{in}}_j\bigg)^2\\
=& \Theta_i\quad\text{where}\quad\Theta_i:=\bigg(\mu_{i-1}+\xi_{i-1}+2(\mu_i+\xi_i)\bigg)\bigg(\sum_{j=1}^{\infty}jc^{\text{in}}_j\bigg)^2.
\end{align*}	
\end{proof}
Our next goal is to prove the existence of the solution to the DCA equations \eqref{1_8}-\eqref{1_9} under some particular  conditions on $\K_{i,j}$ and $\C_{i,j}$ by the following proposition. 
\begin{Proposition}\label{prop_4_1}
Assume that $\left\{\K_{i,j}\right\}$ and $\left\{\C_{i,j}\right\}$  are nonnegative and symmetric satisfying  hypotheses \textit{(DH1a)} and \textit{(DH2b)}.
If $c^{\textnormal{in}}=\left\{c^{\textnormal{in}}_i\right\}_{i\geq1}$ be a sequence of nonnegative real numbers such that $c^{\textnormal{in}}\in Y_1^{+}$, 
then there exists at least a solution $c$ to the DCA equations  \eqref{1_8}-\eqref{1_9} on $[0,+\infty)$ such that mass conservation property  holds. 
\end{Proposition}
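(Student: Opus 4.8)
The plan is to construct $c$ as a limit of the truncated solutions $c^{N}$ supplied by Lemmas \ref{lem_4_1}--\ref{lem_4_2}, after extending them to infinite sequences by setting $c^{N}_i(t):=0$ for $i>N$. First I would record the two uniform bounds already in hand. The mass identity \eqref{4_5} gives, for every fixed $i$, the uniform ceiling $0\le c^{N}_i(t)\le\sum_{j\ge1}jc^{\textnormal{in}}_j=:\rho<+\infty$ valid for all $N\ge i$ and $t\ge0$, while Lemma \ref{lem_4_2} gives $\big|\tfrac{\dd c^{N}_i}{\dd t}\big|\le\Theta_i$. Hence, for each fixed $i$, the family $\{c^{N}_i\}_{N\ge i}$ is uniformly bounded and equi-Lipschitz on $[0,T]$. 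By the Arzel\`a--Ascoli theorem combined with a diagonal extraction over $i$, there is a subsequence (not relabelled) and nonnegative limits $c_i\in C([0,+\infty))$ with $c^{N}_i\to c_i$ uniformly on every compact time interval.

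Second, I would pass to the limit in the integral formulation of Definition \ref{def_4_1}(ii). For fixed $i$ the gain terms and the finite depletion sum $\sum_{j=1}^{i}\C_{i,j}c^{N}_ic^{N}_j$ involve only finitely many factors converging uniformly on $[0,T]$, so they pose no difficulty. The delicate contributions are the infinite tails $\sum_{j\ge i}\K_{i,j}c^{N}_ic^{N}_j$ and $\sum_{j\ge i-1}j\C_{i-1,j}c^{N}_j$ (together with its index-$i$ companion). For the first, hypothesis \textit{(DH1a)} provides, for any $\eta>0$, an index $J$ with $\K_{i,j}\le\eta j$ for $j\ge J$, so that $\sum_{j\ge J}\K_{i,j}c^{N}_j\le\eta\sum_{j\ge J}jc^{N}_j\le\eta\rho$ uniformly in $N$ and $t$, while the head $j<J$ converges. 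For the $\C$-tails, \textit{(DH2b)} gives $\C_{i,j}\le\mathcal{M}$ for $j\ge m$, whence $\sum_{j\ge i-1}j\C_{i-1,j}c^{N}_j\le(\text{finite head})+\mathcal{M}\sum_{j}jc^{N}_j\le(\text{finite head})+\mathcal{M}\rho$. These uniform tail bounds simultaneously verify the integrability requirements of Definition \ref{def_4_1}(i) and, through dominated convergence in the time variable, allow me to replace $c^{N}$ by $c$ in every term of the fixed-point identity, so that $c$ solves \eqref{1_8}--\eqref{1_9}.

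Finally I would establish mass conservation. Fatou's lemma along the subsequence gives at once the inequality $\sum_{i\ge1}ic_i(t)\le\rho$, since $\sum_{i=1}^{K}ic_i(t)=\lim_N\sum_{i=1}^{K}ic^{N}_i(t)\le\liminf_N\sum_{i=1}^{N}ic^{\textnormal{in},N}_i\le\rho$ for every $K$. The substantive point is to upgrade this to equality, i.e. to exclude loss of mass to arbitrarily large clusters in the limit. The driving mechanism is the exact cancellation $j(\phi_{i+1}-\phi_i)-\phi_j\equiv0$ at $\phi_i=i$ visible in \eqref{1_10}, which renders the first moment formally invariant; to make this rigorous uniformly in $N$ I would propagate a \emph{superlinear moment}. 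Since $c^{\textnormal{in}}\in Y_1^{+}$, the de la Vall\'ee-Poussin theorem furnishes a convex weight $\psi_i$ with $\psi_i/i\to+\infty$ and $\sum_i\psi_ic^{\textnormal{in}}_i<+\infty$; testing the weak formulation of the truncated system (the analogue of \eqref{1_10}) with $\phi_i=\psi_i$, and bounding the increments $\psi_{i+1}-\psi_i$ by convexity together with \textit{(DH1a)}--\textit{(DH2b)}, should yield a differential inequality $\tfrac{\dd}{\dd t}\sum_i\psi_ic^{N}_i\le C\big(1+\sum_i\psi_ic^{N}_i\big)$, so Gronwall's lemma bounds $\sum_i\psi_ic^{N}_i$ uniformly in $N$ on $[0,T]$. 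This uniform superlinear bound forces the family $\{ic^{N}_i\}_i$ to be uniformly integrable, letting me pass to the limit $N\to+\infty$ in \eqref{4_5} and conclude $\sum_{i\ge1}ic_i(t)=\sum_{i\ge1}ic^{\textnormal{in}}_i$.

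I expect this last step---propagating the superlinear moment so as to rule out gelation under only the sublinear growth \textit{(DH1a)} and the boundedness \textit{(DH2b)}---to be the main obstacle, because the coupling of the OHS gain/loss structure with the contrary inverse-aggregation terms means the increment estimate for $\psi$ must absorb the $\K$- and $\C$-contributions simultaneously, and only the sublinearity of $\K$ together with the first-moment control is available to dominate the potentially dangerous terms.
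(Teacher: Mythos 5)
Your existence argument is essentially the paper's own proof: you extend the truncated solutions of Lemma \ref{lem_4_1} by zero, combine the uniform bound coming from the mass identity \eqref{4_5} with the equi-Lipschitz bound of Lemma \ref{lem_4_2}, extract a limit by Arzel\`a--Ascoli (the paper leaves the diagonal step over $i$ implicit), and pass to the limit in the integral formulation of Definition \ref{def_4_1} by splitting the infinite sums at an index $m$, with \textit{(DH1a)} making the $\K$-tail uniformly small and \textit{(DH2b)} bounding the $\C$-tail --- this is exactly the paper's treatment in \eqref{4_21}--\eqref{4_24}, including your verification of the integrability requirements via $\mu_i=\sup_j\K_{i,j}/j$ and $\mathcal{M}$. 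One remark on a point you half-notice: for the $\C$-tail, neither your bound $\mathcal{M}\rho$ nor the paper's bound $t\mathcal{M}\bigl(\sum_j jc^{\textnormal{in}}_j\bigr)^2$ tends to $0$ as $m\to+\infty$, so strictly speaking the limit passage in the $\sum_j j\C_{i,j}c^N_ic^N_j$ terms requires uniform smallness of the mass tails $\sum_{j\ge m}jc^N_j$ (uniform integrability of $\{jc^N_j\}_j$), which is precisely the ingredient your final paragraph tries to manufacture; the paper passes over this.

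On mass conservation you genuinely part ways with the paper, and here lies the gap. The paper's proof never establishes the claimed equality: it records only $\sum_i ic_i(t)\le\sum_i ic^{\textnormal{in}}_i$ and stops once the integral identity is verified. Your plan to upgrade the inequality by propagating a de la Vall\'ee-Poussin superlinear moment is the standard Ball--Carr-type remedy, but the Gronwall inequality you conjecture cannot be derived from \textit{(DH1a)}--\textit{(DH2b)} alone. \textit{(DH1a)} is pointwise in $i$: it makes $\mu_i$ finite for each fixed $i$ but allows $\mu_i\to+\infty$ arbitrarily fast. In the $\K$-part of \eqref{1_10} tested with $\phi_i=\psi_i$, the dangerous positive contribution is $\sum_i\sum_{j\le i}j(\psi_{i+1}-\psi_i)\K_{i,j}c_ic_j$; by symmetry $\K_{i,j}=\K_{j,i}\le i\mu_j$ for $j\le i$, and for a convex superlinear weight one has $i(\psi_{i+1}-\psi_i)\gtrsim\psi_i$ (e.g.\ $\psi_i=i\log i$), so the best available domination is by $\bigl(\sum_i\psi_ic_i\bigr)\bigl(\sum_j j\mu_jc_j\bigr)$, and nothing in the hypotheses controls $\sum_j j\mu_jc_j$. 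Your closing suspicion is therefore exactly right: under \textit{(DH1a)}--\textit{(DH2b)} the superlinear-moment step fails, and an actual proof of mass conservation (rather than the inequality) requires additional growth restrictions of the type \eqref{grt_1} imposed in Section \ref{sec_6}, where second-moment bounds make the formal cancellation $j(\phi_{i+1}-\phi_i)-\phi_j=0$ at $\phi_i=i$ rigorous. In short: your existence part matches the paper; your mass-conservation sketch attempts more than the paper proves, correctly identifies the mechanism (uniform integrability of $\{ic_i^N\}$), but cannot be completed under the stated hypotheses.
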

\begin{proof}
Let	$\left\{c^N_i\right\}_{N\geq 1}$ be the solution of the system of equations \eqref{4_2}-\eqref{4_3} obtained from Lemma \ref{lem_4_1}. Also, from Lemma \ref{lem_4_2},  $\left\{c^N_i\right\}_{N\geq 1}$ is uniformly bounded on $C[0,T)$ for $T\in(0,+\infty)$.
Thus,  \text{Arzel\`{a}-Ascoli} theorem  guarantees  that there exists a function $c=\left\{c_i\right\}_{i\geq1}$ and a subsequence of $\left\{c^N_i\right\}_{N\geq i}$ (not relabeled) such that 
\begin{align}\label{4_14}
c^N_i\longrightarrow c_i\quad\text{in}\quad  C([0,T)),\quad\text{for each}\quad i\geq1\hspace{0.15cm}\text{and}\hspace{0.15cm} T>0.
\end{align}
 Therefore,  $c_i$ is nonnegative function on $[0,+\infty)$ satisfying $\ds \sum_{i=1}^{\infty}ic_i(t)\le \sum_{i=1}^{\infty}ic^{\text{in}}_i$ for each $i\geq1$ and $t\geq0$. Therefore, for any  $t\geq0$ we obtain
\begin{align*}
\sum_{j=i}^{\infty}\K_{i,j}c_j(t)\le \bigg(\sup_{j\geq i}\frac{\K_{i,j}}{j}\bigg)\sum_{j=1}^{\infty}jc^{\text{in}}_j\le \mu_i\sum_{j=1}^{\infty}jc^{\text{in}}_j,
\end{align*}
where $\mu_i$ is defined in \eqref{4_10}.
Therefore, 
$\ds \sum_{j=i}^{\infty}\K_{i,j}c_j(t)\in L^1(0,t),\hspace{0.2cm}\text{for every}\hspace{0.2cm} t\geq0.$
By using growth condition \textit{(DH2b)}  and for each  $t\geq0$, we get 
\begin{align}
\sum_{j=i}^{\infty}j\C_{i,j}c_j(t)
\leq \mathcal{M}
\sum_{j=1}^{\infty}jc^{\text{in}}_j,
\end{align}
which implies that for all $i\geq1$,
\begin{align}
\sum_{j=i}^{\infty}j\C_{i,j}c_j(t)\in L^1(0,t), \quad t\geq0. 
\end{align}
Similarly, for all $i\geq1$ and $t\geq0$
\begin{align}
\sum_{j=i-1}^{\infty}j\C_{i-1,j}c_j(t)\in L^1(0,t).
\end{align}
Using convergence criteria of $c_i^N$ \eqref{4_14} on the product and sum of finite terms,
we can write that for each $t\geq0$ and $i\geq1$, 
\begin{align}
\int_{0}^{t}c^N_{i-1}(s) \sum_{j=1}^{i-1}j \K_{i-1,j}  c^N_{j}(s)\dd s\xrightarrow {N\to +\infty}\int_{0}^{t}c_{i-1}(s) \sum_{j=1}^{i-1} j\K_{i-1,j} c_{j}(s)\dd s.
\end{align}
Similarly, 
\begin{align}
\int_{0}^{t}c^N_{i}(s) \sum_{j=1}^{i}j \K_{i,j}  c^N_{j}(s)\dd s\xrightarrow {N\to+\infty}\int_{0}^{t}c_{i}(s) \sum_{j=1}^{i} j\K_{i,j} c_{j}(s)\dd s,\quad\text{and}
\end{align}
\begin{align}
\int_{0}^{t}c^N_{i}(s) \sum_{j=1}^{i} \C_{i,j} c^N_{j}(s)\dd s\xrightarrow{N\to+\infty} \int_{0}^{t}c_{i}(s) \sum_{j=1}^{i}\C_{i,j}c_{j}(s)\dd s.
\end{align}
For passing to the limit for other two terms, we fix $m\geq i$ and $N>m$. Therefore, for every $t\geq0$, we have
\begin{align}\label{4_21}
\notag\bigg|\int_{0}^{t}\bigg(\sum_{j=i}^{N}\K_{i,j} c^N_{i}(s) c^N_{j}(s)-\sum_{j=i}^{\infty}\K_{i,j} c_{i}(s) c_{j}(s)\bigg)\dd s\bigg|
\le&\bigg| \int_{0}^{t}\sum_{j=i}^{m} \K_{i,j}\bigg[c^N_{i}(s) c^N_{j}(s)-c_i(s)c_j(s)\bigg]\dd s\bigg|\notag\\
&+\bigg|\int_{0}^{t}\sum_{j=m}^{N}\K_{i,j} c^N_{i}(s)c^N_{j}(s)\dd s\bigg|+\bigg|\int_{0}^{t}\sum_{j=m}^{\infty}\K_{i,j} c_{i}(s)c_{j}(s)\dd s\bigg|.
\end{align}
Applying Lebesgue dominated convergence theorem and convergence criteria of $c_i^N$,  the first term of inequality 
\eqref{4_21} tends to zero as $N\to+\infty$. By  growth condition \textit{(DH1a)}, it follows that for each $t\geq0$,
\begin{align}
\int_{0}^{t}\sum_{j=m}^{N}\K_{i,j} c^N_{i}(s)c^N_{j}(s)\dd s\le \sup_{j\geq m}\frac{\K_{i,j}}{j}\int_{0}^{t}\sum_{j=m}^{N}jc^N_{i}(s)c^N_{j}(s)\dd s\le t\bigg(\sup_{j\geq m}\frac{\K_{i,j}}{j}\bigg)\bigg(\sum_{j=1}^{\infty}jc^{\text{in}}_{j}\bigg)^2,
\end{align}
and similarly, for each $t\geq0$,
\begin{align}
\int_{0}^{t}\sum_{j=m}^{\infty}\K_{i,j} c_{i}(s)c_{j}(s)\dd s\le t\bigg(\sup_{j\geq m}\frac{\K_{i,j}}{j}\bigg)\bigg(\sum_{j=1}^{\infty}jc^{\text{in}}_{j}\bigg)^2.
\end{align}
For $N\geq M$ and for every $t\geq0$, we may write
\begin{align}\label{4_24}
\notag	\bigg|\int_{0}^{t}\bigg(\sum_{j=i}^{N}j\C_{i,j} c^N_{i}(s) c^N_{j}(s)-\sum_{j=i}^{\infty}j\C_{i,j} c_{i}(s) c_{j}(s)\bigg)\dd s\bigg|
\le&\bigg| \int_{0}^{t}\sum_{j=i}^{m} j\C_{i,j}\left[c^N_{i}(s) c^N_{j}(s)-c_i(s)c_j(s)\right]\dd s\bigg|\notag\\
&	+\bigg|\int_{0}^{t}\sum_{j=m}^{N}j\C_{i,j} c^N_{i}(s)c^N_{j}(s)\dd s\bigg|+\bigg|\int_{0}^{t}\sum_{j=m}^{\infty}j\C_{i,j} c_{i}(s)c_{j}(s)\dd s\bigg|.
\end{align}
Using the similar argument as in \eqref{4_21} for above  inequality, the first term tends to zero as $N\to \infty$ and  by growth condition \textit{(DH2b)}, for each $t\geq0$, second term can reduced as
\begin{align}
\int_{0}^{t}\sum_{j=m}^{N}j\C_{i,j} c^N_{i}(s)c^N_{j}(s)\dd s\le \bigg(\sup_{j\geq m}\C_{i,j}\bigg)\int_{0}^{t}\sum_{j=m}^{N}{jc^N_{j}(s)}c^N_{i}(s)\dd s\le t\mathcal{M}\bigg(\sum_{j=1}^{\infty}jc^{\text{in}}_{j}\bigg)^2,
\end{align}
and similarly, for each $t\geq0$
\begin{align*}
\int_{0}^{t}\sum_{j=m}^{\infty}j\;\C_{i,j}c_{i}(s)c_{j}(s)\dd s\le t\mathcal{M}\bigg(\sum_{j=1}^{\infty}jc^{\text{in}}_{j}\bigg)^2.
\end{align*}
Hence, collecting above all results and passing to the successive limits as $N, m\to+\infty$, for each $t\geq0$ we get
\begin{align*}
\int_{0}^{t}\sum_{j=i}^{N}\K_{i,j}c^N_{i}(s) c^N_{j}(s)\dd s \longrightarrow\int_{0}^{t}\sum_{j=i}^{\infty}\K_{i,j} c_{i}(s) c_{j}(s)\dd s,\quad\text{and}
\end{align*}
\begin{align*}
\int_{0}^{t}\sum_{j=i}^{N}j\;\C_{i,j} c^N_{i}(s) c^N_{j}(s)\dd s \longrightarrow\int_{0}^{t}\sum_{j=i}^{\infty}j\C_{i,j} c_{i}(s) c_{j}(s)\dd s.
\end{align*}
Therefore,  $c=\left\{c_i\right\}_{i\geq1}$ satisfies equations  \eqref{1_8}-\eqref{1_9} and hence, $c$ is a solution to the DCA equations \eqref{1_8}-\eqref{1_9} in the sense of Definition \ref{def_4_1}.
\end{proof}

\section{Long-time behavior and occurrence of gelation}\label{sec_6}

\subsection{Propagation of Moments:}
 We consider a suitable space  such that initial moments belong to that space.
The $r$-th order moment  of the system at time $t\geq0$ is defined by $\ds M_r(t)=\sum_{i=1}^{\infty}i^rc_i(t)$, for $r\geq0$.
Here, zero-th moment $M_0(t)$ and first moment $M_1(t)$ respectively 
represent the total number and mass of the particles at any time $t$. 

\begin{Remark}
Assume that   $c^{\text{in}}\in Y_{r}^+$, for all $r\geq0$. The  $r$-th moment  is uniformly bounded in space  $Y_{r}^+$ for any finite time under specified kinetic-rates.
 In this regard, we  propose the following proposition for the growth conditions on $\K$ and $\C$ defined by
 \begin{align}\label{grt_1}
 	\K_{i,j}\leq \mathcal{A}_1ij,\quad \text{and}\quad 	\C_{i,j}\leq \mathcal{A}_2ij,\quad\text{for constants}\quad \mathcal{A}_1,\mathcal{A}_2>0.
 \end{align}
\end{Remark}

\begin{Proposition}
Assume that	$\K_{i,j}$ and $\C_{i,j}$  satisfy conditions \eqref{grt_1} and $\ds c^{\textnormal{in}}\in Y_{r}^+$, for $r\geq0$. Then,  the moments of all orders corresponding to  DCA equations \eqref{1_8}-\eqref{1_9}  is uniformly bounded in  $ Y_{r}^+$ over a finite time. 
\end{Proposition}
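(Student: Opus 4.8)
The plan is to test the weak formulation \eqref{1_10} against the weight $\phi_i=i^r$ and reduce the evolution of $M_r(t)=\sum_{i\ge1} i^r c_i(t)$ to a differential inequality that closes via Gronwall's lemma. Since \eqref{1_10} involves infinite sums, I would first perform every computation on the truncated system \eqref{4_2}--\eqref{4_3}, whose solutions $c^N$ have only finitely many nonzero components so that term-by-term differentiation is legitimate, derive a bound for $M_r^N(t):=\sum_{i=1}^N i^r c_i^N(t)$ uniform in $N$, and only at the end pass to the limit $N\to\infty$ using $c_i^N\to c_i$ from Proposition \ref{prop_4_1} together with Fatou's lemma, which yields $M_r(t)\le\liminf_N M_r^N(t)$. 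Before the genuinely nonlinear regime, the low-order moments are controlled for free: $\phi_i\equiv1$ shows $M_0$ is nonincreasing, $\phi_i=i$ gives the mass identity $M_1'\equiv0$ (already recorded in \eqref{4_5} of Lemma \ref{lem_4_1}), and for $0\le r\le1$ the interpolation $M_r\le M_0^{1-r}M_1^{r}$ gives boundedness. Hence the heart of the matter is $r\ge2$.

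For the nonlinear estimate the key elementary inequality is $(i+1)^r-i^r\le r\,2^{r-1} i^{r-1}$, valid for every real $r\ge1$ and $i\ge1$ by the mean value theorem together with $i+1\le 2i$. Substituting $\phi_i=i^r$ into \eqref{1_10} and discarding the favourable negative contribution $-j^r$, the bracket $j((i+1)^r-i^r)-j^r$ is dominated by $r\,2^{r-1} i^{r-1}j$. Using the growth hypotheses $\K_{i,j}\le\mathcal{A}_1 ij$ and $\C_{i,j}\le\mathcal{A}_2 ij$ from \eqref{grt_1}, the OHS double sum (restricted to $j\le i$) is bounded by $r\,2^{r-1}\mathcal{A}_1\sum_i\sum_{j\le i}i^r j^2 c_ic_j\le r\,2^{r-1}\mathcal{A}_1\,M_rM_2$, and the inverse-aggregation double sum ($j\ge i$) likewise by $r\,2^{r-1}\mathcal{A}_2\,M_rM_2$; adding them gives
\begin{align*}
\frac{\dd}{\dd t}M_r(t)\le C_r\,M_r(t)\,M_2(t),\qquad C_r:=r\,2^{r-1}(\mathcal{A}_1+\mathcal{A}_2).
\end{align*}
The same computation run on $c^N$ produces the identical inequality for $M_r^N$ with truncated ranges, uniformly in $N$.

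It remains to integrate this hierarchy. I would treat $M_2$ as the base case: setting $r=2$ in the differential inequality yields the closed inequality $M_2'\le C_2 M_2^2$, whence
\begin{align*}
M_2(t)\le \frac{M_2(0)}{1-C_2 M_2(0)\,t}=:B_2(t),\qquad t\in[0,T^*),\quad T^*:=\frac{1}{C_2 M_2(0)},
\end{align*}
with $M_2(0)<+\infty$ because $c^{\textnormal{in}}\in Y_2^+$. For every $r>2$ the coefficient $M_2$ in the differential inequality is now a known locally bounded function, so $M_r'\le C_r B_2(t)\,M_r$, and Gronwall's lemma gives $M_r(t)\le M_r(0)\exp\big(C_r\int_0^t B_2(s)\,\dd s\big)<+\infty$ on the common interval $[0,T^*)$, the integral being finite since $\int_0^t B_2=-C_2^{-1}\log(1-C_2M_2(0)t)$. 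Bootstrapping through $M_2$ is precisely what furnishes a single finite horizon $T^*$ on which all moments stay bounded, rather than an $r$-dependent interval.

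The main obstacle, and the reason the statement is confined to a finite time, is the homogeneity-two scaling of the kernels: the factor $\K_{i,j}\le\mathcal{A}_1 ij$ unavoidably converts the gain term into $i^r j^2 c_ic_j$, so the best inequality one can extract is quadratic ($M_2'\lesssim M_2^2$) and cannot be globalised — this is exactly the threshold at which gelation appears in the subsequent analysis. The only other delicate point is the legitimacy of differentiating the infinite-sum moment, which is why I would run the entire argument on the truncated system and recover $M_r$ by Fatou at the very end.
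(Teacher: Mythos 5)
Your proposal is correct and follows essentially the same route as the paper: test the moment equation \eqref{1_10} with $\phi_i=i^r$, use the quadratic bounds $\K_{i,j}\le\mathcal{A}_1 ij$ and $\C_{i,j}\le\mathcal{A}_2 ij$ from \eqref{grt_1} to obtain a Riccati-type inequality that closes at $r=2$ (the paper keeps the extra linear term and gets $M_2'\le\mathcal{A}[2M_2^2+\bar{M}_1M_2]$, solved as a Bernoulli inequality, while you drop to the pure quadratic --- qualitatively the same finite-time bound), then control higher moments and interpolate for non-integer orders. Your two refinements --- running the estimates on the truncated system \eqref{4_2}--\eqref{4_3} and recovering $M_r$ by Fatou's lemma, and the single mean-value inequality $(i+1)^r-i^r\le r\,2^{r-1}i^{r-1}$ giving $M_r'\le C_rM_rM_2$ for all real $r\ge1$ on one common horizon $T^*$ --- in fact tighten the paper's argument, which differentiates the infinite sums directly without justification and proceeds integer by integer (``similarly'', ``continuing this process'') before interpolating.
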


\begin{proof}
We have $M_r(0)\in Y_{r}^+$ for each $r\geq0$. Setting $\phi_i=1$ in moment equation \eqref{1_10}   and  using nonnegativity of $\K_{i,j}$, $\C_{i,j}$ and $c_i$, the time evolution of the zero-th  moment  is defined by 
\begin{align}\label{5_1}
\frac{\dd}{\dd t}\sum_{i=1}^{\infty}c_i(t)=-\sum_{i=1}^{\infty}\sum_{j=1}^{i}\K_{i,j}c_i(t)c_j(t)-\sum_{i=1}^{\infty}\sum_{j=i}^{\infty}\C_{i,j}c_i(t)c_j(t)\le 0.
\end{align}
$\text{This implies} \quad\ds \sum_{i=1}^{\infty}c_i(t)\le\sum_{i=1}^{\infty}c_i(0)=M_0(0)=\bar{M_0}$ (\text{say}).
Similarly, setting $\phi_i=i$ in  moment equation \eqref{1_10}, we obtain the time evolution of the first moment corresponding to the DCA equations \eqref{1_8}-\eqref{1_9}  is written as
\begin{align}\label{5_2}
\frac{\dd}{\dd t}\sum_{i=1}^{\infty}ic_i(t)=0\quad\text{which implies}\quad\sum_{i=1}^{\infty}ic_i(t)=\sum_{i=1}^{\infty}ic^{\text{in}}=M_1(0)=\bar{M_1},\quad\text{for}\hspace{0.2cm} t\geq0. 
\end{align}
Thus, the mass conservation law holds. 
Observe that  $j[(i+1)^2-i^2]-j^2=2ij+j-j^2\le(2ij+j)$ for all $i,j\geq1$.
Setting $\phi_i=i^2$ in moment equation  \eqref{1_10},  using growth conditions \eqref{grt_1} and mass conserving property \eqref{5_2}, the time evolution of the second moment is given by
\begin{align*}
	\frac{\dd M_2(t)}{\dd t}
	\le& \mathcal{A}_1\sum_{i=1}^{\infty}\sum_{j=1}^{\infty}(2i^2j^2+ij^2)c_i(t)c_j(t)+\mathcal{A}_2\sum_{i=1}^{\infty}\sum_{j=1}^{\infty}(2i^2j^2+j^2)c_i(t)c_j(t)\\
	\leq &
	\mathcal{A}\left[2M^2_2(t)+\bar{M}_1 M_2(t)\right]
\end{align*}
where, $\mathcal{A}=2\max\{\mathcal{A}_1,\mathcal{A}_2\}$. From above inequality, we get 
\begin{align}
	M_2(t)\leq \frac{\mathcal{A} \bar{M}_1 M_2(0) \exp(\mathcal{A} \bar{M}_1t)}{\mathcal{A} \bar{M}_1+2\mathcal{A}M_2(0)\left(1-\exp(\mathcal{A} \bar{M}_1t)\right)}
\end{align}
provided denominator is nonzero positive. Therefore, for finite time and given hypothesis,  the second moment $M_2(t)$ is bounded above by $
\bar{M}_2$.
Similarly, we can get the third order moment as
\begin{align*}
 M_3(t)
\leq \bar{M}_3.
\end{align*}
Continuing this process,  the $l$-th order moment  is given by 
$\ds M_l(t)\le \bar{M}_l$, for all $l\in\mathbb{N}$ where $\bar{M_l}$ is a constant. 
Next, To estimate $M_{l_1}(t)$ for a nonnegative real number $l_1$ such that  $l-1<l_1<l$ with a positive integer $l>1$.  Therefore,  previous results for $(l-1)$-th and $l$-th order moments give  $$\ds M_{l-1}(t)=\sum_{i=1}^{\infty}i^{l-1}c_i(t)\leq\bar{M}_{l-1}<+\infty  \quad\text{and} \quad M_{l}(t)=\sum_{i=1}^{\infty}i^{l}c_i(t)\leq\bar{M}_{l}<+\infty.$$
Let us write $l_1$ as $l_1=(l-1)\theta +(1-\theta)l$. Then, $\theta=l-l_1\in(0,1)$. Applying interpolation inequality for moments to the following sum: for each $i\geq1$ and $\theta\in(0,1)$,
\begin{align}
 M_{l_1}(t)=\sum_{i=1}^{\infty}i^{l_1}c_i(t)\leq \left(\sum_{i=1}^{\infty}i^{l-1}c_i(t)\right)^{\theta}\left(\sum_{i=1}^{\infty}i^{l}c_i(t)\right)^{1-\theta}\leq \bar{M}_{l-1}^{\theta}\bar{M}_l^{1-\theta}=\bar{M}_{l_1}, 
\end{align}
where $\bar{M}_{l_1}$ is a constant.
Hence, we conclude that the  moment  of all orders corresponding to the DCA equations \eqref{1_8}-\eqref{1_9} is uniformly bounded in finite time. Therefore,  $c_i(t)$ does not blow up  globally in finite time for $c_i^{\text{in}}\in Y_{r}^+$, for all $r\geq0$.
\end{proof}

\subsection{ Occurrence of gelation}
Under some  growth conditions on kinetic-kernels,  physical properties of particles such as size particle population, mass  in the system may evolve over a period of time. This evolution may lead the  occurrence of  gelation.  Therefore, it may be possible that the total mass of particles, $\ds M_1(t)$ may decrease to zero when time $t$ increases to infinity. In this case, we will inspect the long-time behavior  by the following proposition. 
\begin{Proposition}\label{prop_6_2}
Assume that  $c_i(t)$ be the nonnegative weak solution to the DCA equations  \eqref{1_8}-\eqref{1_9} on $[0,+\infty)$ such that the map  $t\mapsto ||c(t)||_{Y_1}$ is nonincreasing  on $[0,+\infty]$ and the condition $\ds \sum_{i=1}^{\infty}(1+i)c^{\textnormal{in}}_i<+\infty$ holds.
For $0\le t_1\le t_2$,  we have 
$\ds\sum_{i=1}^{\infty}c_i(t_2)\le \sum_{i=1}^{\infty}c_i(t_1).$
Also, assume  that for each $i,j\geq1$, there exists constants $K_1, K_2>0$ such that   $\ds\K_{i,j}\geq K_{1}ij$ and $\ds\C_{i,j}\geq K_{2}ij$. Then, 
\begin{align}\label{5_6}
\lim_{t\to +\infty}	||c(t)||_{Y_1}=0.
\end{align}
\end{Proposition}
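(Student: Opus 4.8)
The plan is to monitor simultaneously the two lowest moments $M_0(t)=\sum_{i\ge1}c_i(t)$ and $M_1(t)=\|c(t)\|_{Y_1}=\sum_{i\ge1}ic_i(t)$. By hypothesis $M_1$ is nonincreasing, so it converges to some limit $L\ge0$ as $t\to+\infty$; the entire difficulty reduces to excluding $L>0$. The driving mechanism is that the strict lower bounds on the kernels force the zeroth moment to dissipate at a rate comparable to $M_1^2$, so a persistently positive mass would drain $M_0$ at a fixed positive rate, which is incompatible with $M_0\ge0$.

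First I would record the evolution of the zeroth moment. Testing the weak formulation \eqref{1_10} with $\phi_i\equiv1$ (equivalently, invoking the identity \eqref{5_1} already obtained in the propagation-of-moments analysis) gives, for a.e.\ $t\ge0$,
\begin{align*}
	\frac{\dd}{\dd t}M_0(t)=-\sum_{i=1}^{\infty}\sum_{j=1}^{i}\K_{i,j}c_i(t)c_j(t)-\sum_{i=1}^{\infty}\sum_{j=i}^{\infty}\C_{i,j}c_i(t)c_j(t)\le0,
\end{align*}
which is precisely the asserted monotonicity $\sum_i c_i(t_2)\le\sum_i c_i(t_1)$ for $t_1\le t_2$. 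Next I would insert the lower bounds $\K_{i,j}\ge K_1 ij$ and $\C_{i,j}\ge K_2 ij$ and set $K:=\min\{K_1,K_2\}>0$. Since the index set $\{j\le i\}$ together with $\{j\ge i\}$ exhausts $\mathbb{N}^2$ (overcounting only the diagonal, which can only increase the sum), the two dissipation terms combine to at least $\sum_{i,j}ij\,c_i c_j=M_1(t)^2$, yielding the fundamental differential inequality
\begin{align*}
	\frac{\dd}{\dd t}M_0(t)\le -K\,M_1(t)^2,\qquad\text{for a.e.}\quad t\ge0.
\end{align*}

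Finally I would argue by contradiction. As $M_1$ is nonincreasing with limit $L$, one has $M_1(t)\ge L$ for every $t$; if $L>0$ then the inequality above gives $\frac{\dd}{\dd t}M_0(t)\le -KL^2<0$, and integration yields $M_0(t)\le M_0(0)-KL^2 t$, which becomes negative for $t$ large and contradicts $M_0(t)\ge0$. (Equivalently, integrating the fundamental inequality over $[0,\infty)$ shows $K\int_0^{\infty}M_1(s)^2\,\dd s\le M_0(0)<+\infty$, and the monotonicity of $M_1$ upgrades this square-integrability to $M_1(t)\to0$.) Hence $L=0$, which is \eqref{5_6}. Note that $M_0(0)=\sum_i c^{\textnormal{in}}_i<+\infty$ here by the assumption $\sum_i(1+i)c^{\textnormal{in}}_i<+\infty$.

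The algebra above is routine; the genuinely delicate point is the rigorous justification of the zeroth-moment identity for a mere weak solution, since $\phi_i\equiv1$ is not finitely supported. I would handle this by testing with the truncations $\phi_i^{(n)}=\mathbf{1}_{\{i\le n\}}$, using the $L^1(0,t)$-integrability of the collision terms guaranteed by Definition \ref{def_4_1} to pass to the limit $n\to+\infty$ by monotone and dominated convergence, and to secure the a.e.\ validity of the derivative inequality. I would also emphasize that the hypothesis that $M_1$ is nonincreasing is essential: without it the estimate only delivers $\liminf_{t\to+\infty}M_1(t)=0$, and it is exactly the monotonicity that converts this into the full limit in \eqref{5_6}.
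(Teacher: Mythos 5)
Your proposal is correct and follows essentially the same route as the paper: both derive a dissipation inequality for the zeroth moment from the kernel lower bounds (the paper via the cutoff test functions $\phi^{\lambda_1}_i$ and the limit $m\to+\infty$, you via sharp truncations $\mathbf{1}_{\{i\le n\}}$), bound that dissipation below by a constant times $\|c(t)\|_{Y_1}^2$, and then exploit the assumed monotonicity of $t\mapsto\|c(t)\|_{Y_1}$ to conclude. The only cosmetic differences are that the paper keeps only the $\K$-term (so only $K_1$ enters, with a factor $\tfrac12$ from symmetrization) and turns the integral bound directly into the explicit decay $\|c(t)\|_{Y_1}\le\bigl(2\sum_i c_i^{\mathrm{in}}/(K_1t)\bigr)^{1/2}$, whereas your headline argument is by contradiction --- though your parenthetical square-integrability remark is exactly the paper's quantitative argument.
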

\begin{proof}
Setting $\phi_i=1$ in  moment equation \eqref{1_10}, we obtain
\begin{align}\label{5_7}
\frac{\dd}{\dd t}\sum_{i=1}^{\infty}c_i(t)=-\sum_{i=1}^{\infty}\sum_{j=1}^{i}\K_{i,j}c_i(t)c_j(t)-\sum_{i=1}^{\infty}\sum_{j=i}^{\infty}\C_{i,j}c_i(t)c_j(t).
\end{align} 
Integrating  equation \eqref{5_7} with respect to $t$ over $[t_1,t_2]$, 
\begin{align}\label{5_8}
\sum_{i=1}^{\infty}c_i(t_2)-\sum_{i=1}^{\infty}c_i(t_1)=-\int_{t_1}^{t_2}\bigg(\sum_{i=1}^{\infty}\sum_{j=1}^{i}\K_{i,j}c_i(t)c_j(t)+\sum_{i=1}^{\infty}\sum_{j=i}^{\infty}\C_{i,j}c_i(t)c_j(t)\bigg)\dd t\leq0,
\end{align} 
due to the nonnegativity of $\K_{i,j}$ $\C_{i,j}$ and $c_i$. Thus $\ds \sum_{i=1}^{\infty}c_i(t)$ is nonincreasing.
 
For second assertion, let $m>1$ and $t\geq0$,  we set
\begin{align}
\mathcal{G}_m(t)=\sum_{i=1}^{m}c_i(t).
\end{align}

Set $\lambda_1\in (0,1)$, define  $$\phi^{\lambda_1}_i:=\min\left\{1,\frac{(m+\lambda_1-i)_+}{\lambda_1}\right\},\quad\text{for each}\quad i\geq1.$$ Note that $0\le \phi^{\lambda_1}_i\in L^{\infty}(\mathbb{R_+})$ and $\phi^{\lambda_1}_{i+1}-\phi^{\lambda_1}_i\le 0$.  Inserting  $\phi^{\lambda_1}_i$ in the moment equation \eqref{1_10}, integrating  with respect to $t$ over $[t_1,t_2]$ and using symmetry of kernels yield
\begin{align}\label{5_11}
\sum_{i=1}^{\infty}\left(c_i(t_2)-c_i(t_1)\right)\phi^{\lambda_1}_i&\notag\le -\int_{t_1}^{t_2}\bigg(\sum_{i=1}^{\infty}\sum_{j=1}^{i}\K_{i,j}c_i(t)c_j(t)\phi^{\lambda_1}_j+\sum_{i=1}^{\infty}\sum_{j=i}^{\infty}\C_{i,j}c_i(t)c_j(t)\phi^{\lambda_1}_j\bigg)\dd t\\
&\le -\frac{1}{2}\int_{t_1}^{t_2}\bigg(\sum_{i=1}^{m}\sum_{j=1}^{m}\K_{i,j}c_i(t)c_j(t)+\sum_{i=1}^{m}\sum_{j=m}^{\infty}\C_{i,j}c_i(t)c_j(t)\bigg)\dd t. 
\end{align}
Passing  $\lambda_1\to0$  in \eqref{5_11} gives
\begin{align}\label{5_12}
\mathcal{G}_m(t_2)-\mathcal{G}_{m}(t_1)\le -\frac{1}{2}\int_{t_1}^{t_2}\bigg(\sum_{i=1}^{m}\sum_{j=1}^{m}\K_{i,j}c_i(t)c_j(t)+\sum_{i=1}^{m}\sum_{j=m}^{\infty}\C_{i,j}c_i(t)c_j(t)\bigg)\dd t.
\end{align}
The above inequality indicates that $\mathcal{G}_m(\cdot)$ is  nonnegative and nonincreasing function of time. 
The  assumptions on  initial data  and growth conditions of  $\K_{i,j}$ and $\C_{i,j}$ ensure to pass the limit $m\to\infty$ in inequality \eqref{5_12}. Next, set $t_1=0$, $t_2=t$ and we simplify
\begin{align}\label{5_13}
\sum_{i=1}^{\infty}\left(c_i(t)-c_i(0)\right)	\leq -\frac{1}{2}\int_{0}^{t}\sum_{i=1}^{\infty}\sum_{j=1}^{\infty}\K_{i,j}c_i(s)c_j(s)\dd \leq -\frac{K_1}{2}\int_{0}^{t}\left(\sum_{i=1}^{\infty}ic_i(s)\right)^2\dd s.
\end{align} 
Due to time monotonicity of $\ds\sum_{i=1}^{\infty}c_i(t)$ defined by \eqref{5_8} and nonincreasing function  $t\mapsto ||c(t)||_{Y_1}$,   the above inequality \eqref{5_13} yields
\begin{align}
	\frac{K_1t}{2}\left(||c(t)||_{Y_1}\right)^2 \leq\sum_{i=1}^{\infty}c_i^{\text{in}}<+\infty,
\end{align}
which gives for $t\in(0,+\infty)$,
\begin{align}
||c(t)||_{Y_1}\leq \left(\frac{\ds2\sum_{i=1}^{\infty}c_i^{\text{in}}}{K_1t}\right)^{1/2}\quad\text{which implies}\quad \lim_{t\to+\infty}||c(t)||_{Y_1}=0.
\end{align}
  Hence, it completes the proof of proposition.
\end{proof}


\section{Numerical results}\label{sec_7}
This section presents the numerical study on the accuracy of solutions to the CA equations for two possible cases: (i) $\K=\C$ and (ii) $\K\neq\C$. In the first case, the mathematical equation reduces to a simplified form.
Moreover, in this simplified framework, the exact solutions are available, as reported in the work of Davidson \cite{MR3640930} for specific choices of kernels such as $\K=\C\equiv L$ (a constant) and $\K(x,y)=\C(x,y)=xy$.  Consequently, the case 
$\K=\C$ delivers  significant observation: it not only validates the accuracy of the numerical scheme but also demonstrates its importance to capture the theoretical behavior of the model under analytical growth conditions.
In contrast, the second case represents  a more general form that satisfies the motivation of the present work as it incorporates   additional nonlinear terms. The efficiency and accuracy of the numerical approximations in this case provides stronger validation of the robustness of the theoretical framework. First we analyze the graphs for $\K=\C$ and consequently proceed to solve two examples where $\K\neq\C$.  During numerical computations, we set $C=\lambda\K$, where $0\leq\lambda\leq1$ and both $\K$ and $\C$ satisfy the growth conditions of Theorem \ref{theo_3_1}.

The computational domain is chosen as $[0,10]$. For any $\varepsilon\in(0,1)$, the initial condition   $c^{\text{in},\varepsilon}=\ds \left\{c_i^{\text{in},\varepsilon}\right\}_{i\geq1}$ for the DCA equations is defined by \eqref{2_13}. Set $m=m(\varepsilon)=\ds\left[\frac{10}{\varepsilon}-\frac{1}{2}\right]$ which indicates the total number of cells $\Lambda_i^{\varepsilon}$ contained in the interval $[0,10]$.  The corresponding system of $m$ number of ordinary differential equations (ODEs) with the truncated kernels $\K^m$ and $\C^m$ is written as
\begin{align}\label{7_1}
	\frac{\dd c^m_i}{\dd t}=&\varepsilon \left (c^m_{i-1}\sum_{j=1}^{i-1}j\K^m_{i-1,j}c^m_j-c^m_{i}\sum_{j=1}^{i}j\K^m_{i,j}c^m_j-c^m_{i}\sum_{j=i}^{m}\K^m_{i,j}c^m_j\right)\notag\\
	&	+\varepsilon \left (c^m_{i-1}\sum_{j=i-1}^{m}j\C^m_{i-1,j}c^m_j-c^m_{i}\sum_{j=i}^{m}j\C^m_{i,j}c^m_j-c^m_{i}\sum_{j=1}^{i}\C^m_{i,j}c^m_j\right),\quad \text{with} \quad c^m_i(0)=c_i^{\text{in}},
\end{align}
 for all $i\in\left\{1,2,\dots,m\right\}$.  The system of ODEs  is solved numerically using Matlab's ODE45 code over a time interval $[0,t_{\text{max}}]$.
The approximated solution $f_{\varepsilon}$ is defined by 
\begin{align}\label{7_2}
	f_{\varepsilon}(t,x)=\sum_{i=1}^{m}c_i^m(t)1_{\Lambda_i^{\varepsilon}}(x),\quad\text{for all}\quad t\in [0,t_{\text{max}}]\quad\text{and}\quad x\in [0,10]. 
\end{align} 
 
 To assess further accuracy, we  compare approximated solutions $f_\varepsilon$ with exact solutions (wherever available in literature) for different values of $\varepsilon$. The  relative $L^1$ error is given by \cite{MR2158221}
 \begin{align}\label{7_3}
 	\mathcal{E}^1=\frac{||f_{\text{exact}}(t)-f_{\varepsilon}(t)||_{L^1}}{||f_{\text{exact}}(t)||_{L^1}}.
 \end{align}
 
 During numerical computations, dimensionless values for all the concerned quantities are considered. All simulations are performed in a HP Z6 G4 workstation and using MATLAB R2023b software.
\begin{example}\label{t_1}
	Consider kernels $\K=\C\equiv1$ with  initial condition $f(0,x)=x\exp(-x)$ for DCA equations \eqref{7_1}. The exact solution is collected from Davidson \cite{MR3640930} as
	\begin{align}\label{7_4}
		f(t,x)=\frac{(x-t)\exp(-x+t)}{1+t},\quad\text{for all}\quad x>t.
	\end{align}
\end{example}
  Numerical solutions for different values of $\varepsilon=0.05,0.01,0.005$ (as $\varepsilon\to0$) are plotted in figures \ref{fig_1a}  and \ref{fig_1b}  at times $t=1$ and $t=2.5$ respectively.  We can see a well propagation of the approximated solution to its exact solution as $\varepsilon\to0$. The  error graphs at different values of $\varepsilon$ are also plotted in figure \ref{fig_2} over time interval $[0, t_{\text{max}}]$. 
 \begin{figure}[H]\label{fig_1}
			\centering
			\subfloat[ Numerical and exact solutions at $t=1$
			\label{fig_1a}]{%
					\includegraphics[width=0.52\textwidth]{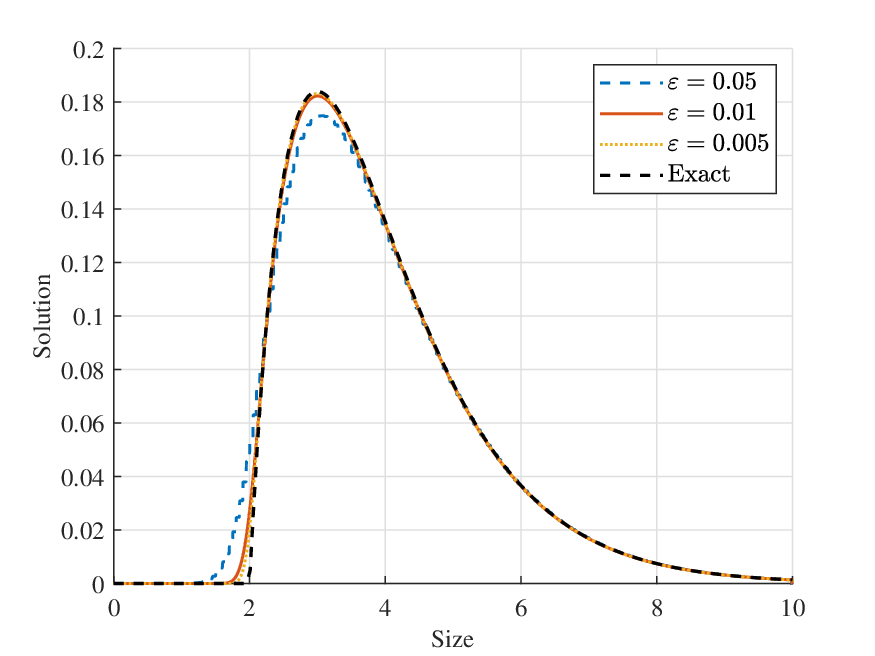}}
			\subfloat[Numerical and exact solutions at $t=2.5$ \label{fig_1b}]{%
					\includegraphics[width=0.52\textwidth]{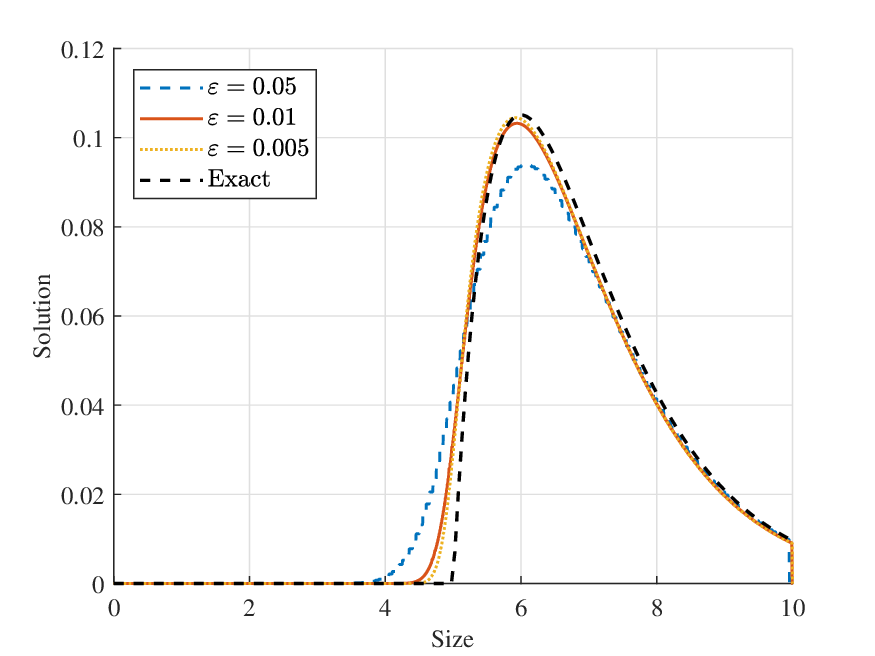}}
					\caption{Numerical and exact solutions at times $t=1$ and $t=2.5$ for test case \ref{t_1}}
		\end{figure}
		\begin{figure}[H]
			\centering
			\subfloat{%
				\includegraphics[width=0.524\textwidth]{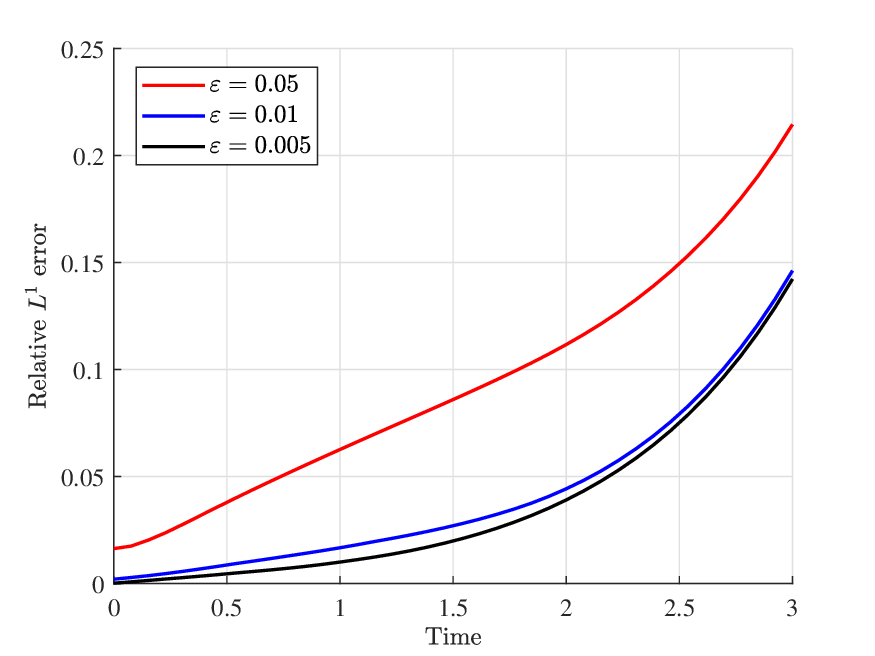}}
				\caption{Error with test case \ref{t_1}}
					\label{fig_2}
			\end{figure}
		\begin{example}\label{t_2}
Consider $\C=\lambda\K$ where $0\leq\lambda\leq1$ and $\K\equiv1$ with initial condition $f(0,x)=x\exp(-x)$ for the DCA equations \eqref{7_1}. Note that for $\lambda=1$, we get test case \ref{t_1} and for $\lambda\neq0$, we attain $\K\neq\C$ and both $\K$ and $\C$ satisfy growth conditions as in Theorem \ref{theo_3_1}. Also for $\lambda=0$, the model  reduces to pure OHS equation. 
\end{example}
 Figures \ref{fig_3a} and \ref{fig_3b} display the solutions obtained for different choices of kernels considered as $\K=1$ and hence, $$\ds \C=\ds \begin{cases}
 	0,&\mbox{for}\quad \lambda=0,\\
 	\frac{1}{2}, & \text{for}\quad \lambda=\frac{1}{2},\\
 	\frac{3}{4}, & \text{for}\quad \lambda=\frac{3}{4},\\
 	1, & \text{for}\quad \lambda=1,
 \end{cases}$$ and $\varepsilon=0.005$ together with exact solution \eqref{7_4}, plotted at times $t=1$ and $t=2.5$ respectively.
\begin{figure}[H]\label{fig_3}
	\centering
	\subfloat[  Numerical and exact solutions  at $t=1$
	\label{fig_3a}]{%
		\includegraphics[width=0.52\textwidth]{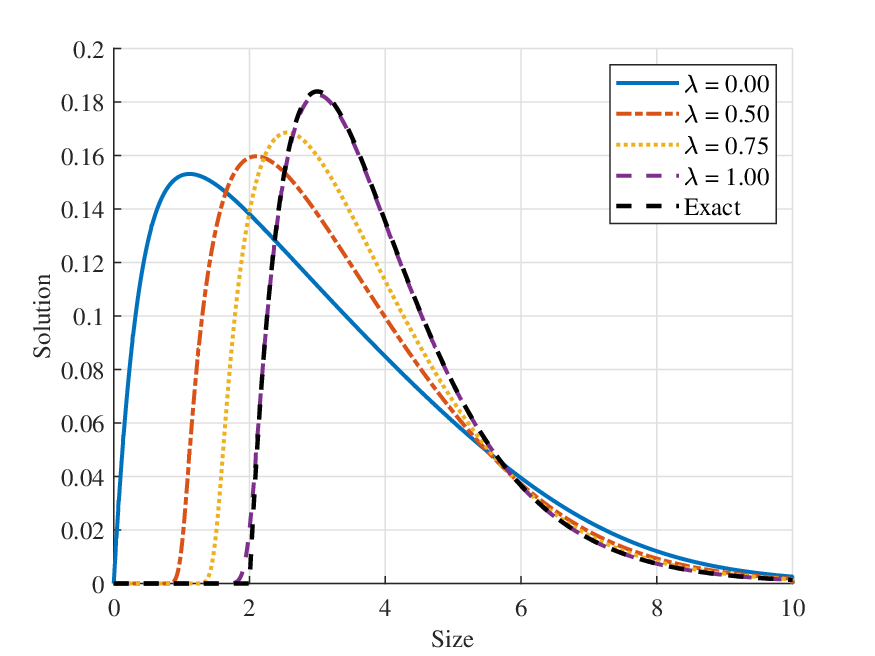}}
	\subfloat[Numerical and exact solutions   at $t=2.5$\label{fig_3b}]{%
		\includegraphics[width=0.52\textwidth]{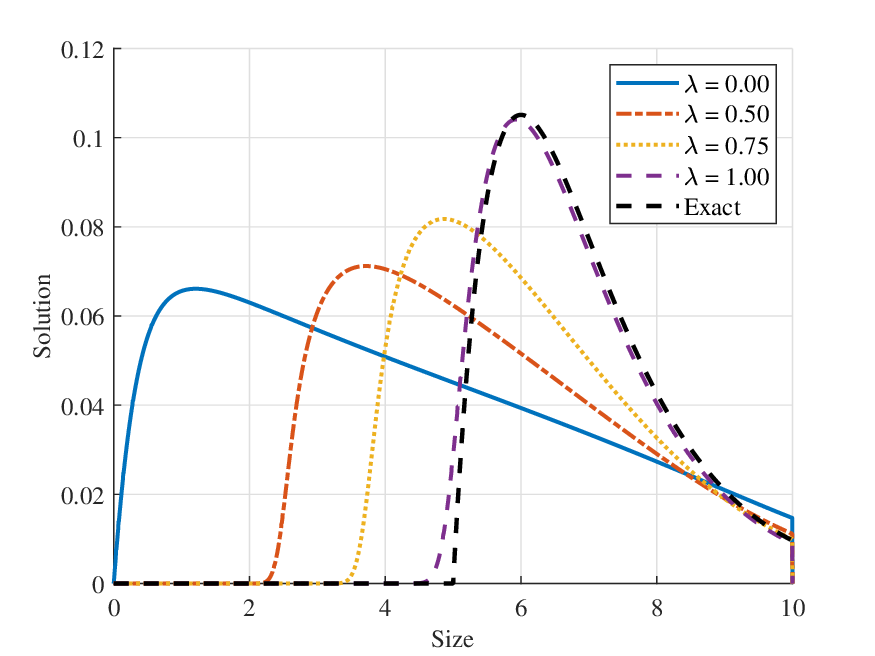}}
		\caption{Numerical and exact solutions for different values of $\lambda$ for test case \ref{t_2}}
	\end{figure}
	 
\begin{example}\label{t_3}
	 Finally, we consider 
		the pure OHS equation setting $\K\equiv1$ and  $\C\equiv 0$ with initial data $f(0,x)=\ds \frac{2}{M}1_{[0,M]}(x)$, for some $M>0$ for the DCA equations \eqref{7_1}. This example is taken from Bagland \cite{MR2158221} with  
		the exact solution 
		\begin{align}
			f(t,x)=\frac{2}{M(1+t)^2}1_{[0,M]}\left(\frac{x}{1+t}\right),\quad\text{for all}\quad (t,x)\in\mathbb{R}^2_{+},
		\end{align}
		to assess the validity of the proposed scheme in reduced kernel conditions.
\end{example}
Similar as test case \ref{t_1} we plot the numerical results for different values of $\varepsilon=0.05,0.01,0.005$ shown in figures \ref{fig_4a} and \ref{fig_4b} at times $t=1$ and $t=2.5$ respectively.
\begin{figure}[H]
	\centering
	\subfloat[ Numerical and exact solutions at $t=1$
	\label{fig_4a}]{%
		\includegraphics[width=0.52\textwidth]{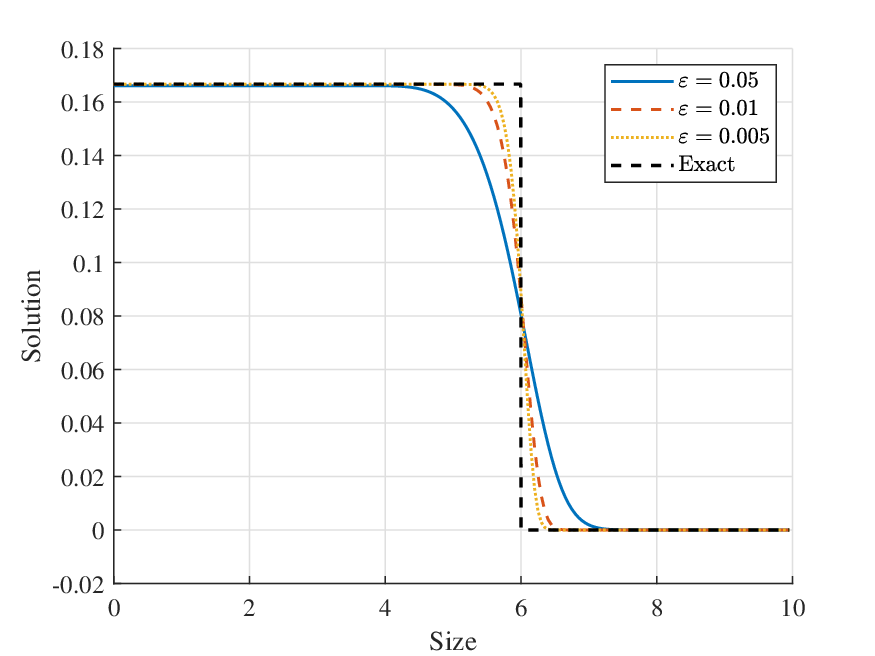}}
	\subfloat[Numerical and exact solutions at $t=2.5$ \label{fig_4b}]{%
		\includegraphics[width=0.52\textwidth]{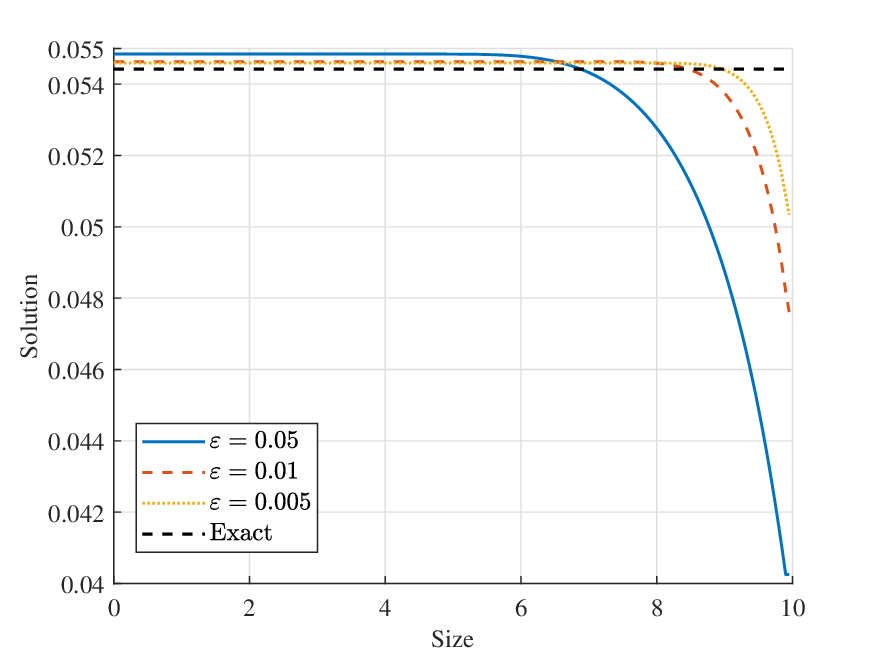}}
		\caption{Numerical and exact solutions at times $t=1$ and $t=2.5$ for $M=3$ for test case \ref{t_3}}\label{fig_4}
	\end{figure}
 
  The  errors  for test case \ref{t_3} are plotted in figure \ref{fig_5}  for $\varepsilon=0.05,0.01, 0.005$ over the time interval $[0,3]$. 
\begin{figure}[H]
	\centering
	\subfloat{%
		\includegraphics[width=0.525\textwidth]{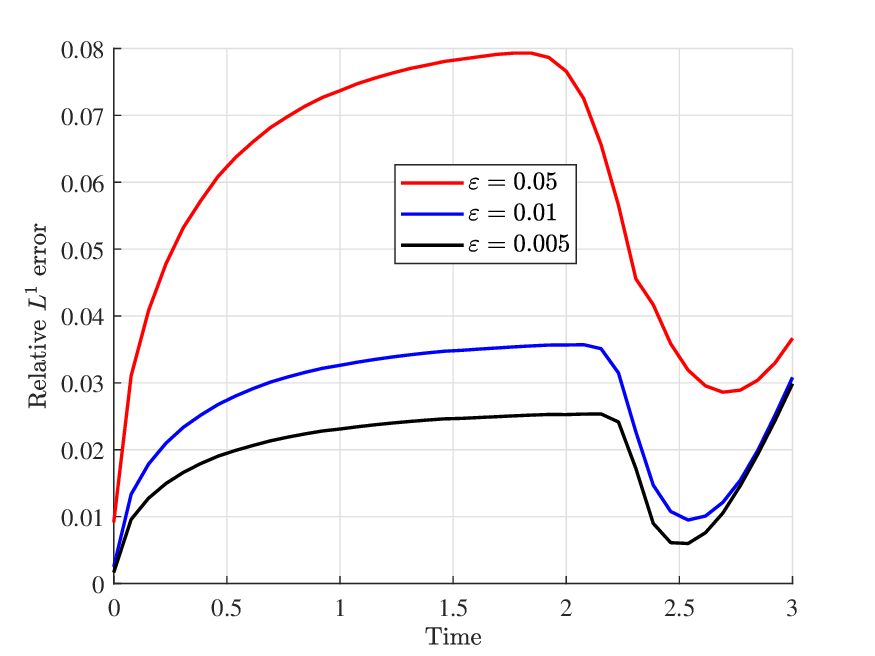}}
		\caption{Error with test case \ref{t_3} for $M=3$}\label{fig_5}
\end{figure}

\section{Conclusion}
This study is devoted to establish the connection between discrete and continuous of  CA equations by proving weak convergence. It is comprehensively shown that the sequence of approximated discrete equations converges to the solution of  continuous equation under weak compactness criterion. Furthermore, we proved the existence and uniqueness of the solution to the discrete version of  CA equation. Moreover, we investigated the long-time behavior for some particular kinetic-kernels along with propagation of moments over a finite time. Finally, some numerical computations successfully showed the agreement to the convergence of solutions to the DCA equations towards to the solution of CCA equation.

\section{Acknowledgment}
AG thanks University Grants Commission (UGC), Govt. of India for their funding support during her PhD program. 
JS thanks to Anusandhan National Research Foundation (ANRF), formerly  Science and Engineering Research Board (SERB), Govt. of India for their support through Core Research Grant (CRG / 2023 / 001483) during this work.

\section{Conflict of interest}	
All the authors certify that they do not have any conflict of interest.

\section{Data availability statement} The data that supports the findings of this study is available from the corresponding author upon reasonable request.

\bibliographystyle{unsrt}
\bibliography{bibliography}

\end{document}